\documentclass[12pt, reqno]{amsproc}
\usepackage[utf8]{inputenc}
\usepackage[T2A]{fontenc}
\usepackage[english]{babel}

\usepackage{graphicx}
\usepackage[margin = 2.5cm]{geometry}
\usepackage{enumitem}
\usepackage[svgcolors, dvipsnames]{xcolor}
\usepackage[all]{xy}
\usepackage{amssymb, amsmath, amscd, amsthm}
\usepackage[colorlinks=true]{hyperref}
\hypersetup{urlcolor=blue, citecolor=red}

\title[Diffeomorphism groups of Morse-Bott foliations on lens spaces]{Homotopy types of diffeomorphism groups of polar Morse-Bott foliations on lens spaces, 1}

\author{Oleksandra Khokhliuk}
\address{Liceum ``Educator'', Yelyzavety Chavdar St, 11а, Kyiv, 02140, Ukraine}
\email{khokhliyk@gmail.com}

\author{Sergiy Maksymenko}
\address{Algebra and Topology Department, Institute of Mathematics NAS of Ukraine \\
Teresh\-chen\-kivska str., 3, Kyiv, 01024, Ukraine}
\email{maks@imath.kiev.ua}

%
%
%

%
%
%





\newcommand\mycolor[1]{}

\setlist[enumerate]{itemsep=0.3ex, topsep=0.3ex, label={\rm(\arabic*)}}
\setlist[itemize]{itemsep=0.3ex, topsep=0.3ex, leftmargin=4ex}

\newtheorem{subtheorem}[subsubsection]{Theorem}
\newtheorem{sublemma}[subsubsection]{Lemma}

\newtheorem{subcorollary}[subsubsection]{Corollary}

\newtheorem{subremark}[subsubsection]{Remark}
\newtheorem{subexample}[subsubsection]{Example}

\makeatletter
\@addtoreset{subsection}{section}
\@addtoreset{equation}{section}
\@addtoreset{figure}{section}
\@addtoreset{table}{section}
\makeatother


\makeatletter
\newcommand\testshape{family=\f@family; series=\f@series; shape=\f@shape.}
\def\myemphInternal#1{\if n\f@shape%
\begingroup\itshape #1\endgroup\/%
\else\begingroup\sf\itshape\small #1\endgroup%
\fi}
\def\myemph{\futurelet\testchar\MaybeOptArgmyemph}
\def\MaybeOptArgmyemph{\ifx[\testchar \let\next\OptArgmyemph
                 \else \let\next\NoOptArgmyemph \fi \next}
\def\OptArgmyemph[#1]#2{\index{#1}\myemphInternal{#2}}
\def\NoOptArgmyemph#1{\myemphInternal{#1}}
\makeatother

\newcommand\monoArrow{\lhook\joinrel\rightarrow}

\newcommand\xmonoArrow[1]{\lhook\joinrel\xrightarrow{~#1~}}

\newcommand\epiArrow{\rightarrow\!\!\!\!\!\to}

\newcommand\xepiArrow[1]{\xrightarrow{#1}\!\!\!\!\!\to}

\newcommand\amatr[4]{\left(\!\begin{smallmatrix}#1\ &#2\\[0.5mm] #3\ &#4 \end{smallmatrix}\!\right)}
\newcommand\avect[2]{\left(\begin{smallmatrix}#1 \\ #2 \end{smallmatrix}\right)}
\newcommand\ddd[2]{\tfrac{\partial#1}{\partial#2}}

\newcommand\Bman{B}
\newcommand\Cman{C}

\newcommand\Eman{E}

\newcommand\Jman{J}
\newcommand\Kman{K}
\newcommand\Lman{L}
\newcommand\Mman{M}

\newcommand\Qman{Q}

\newcommand\Sman{S}
\newcommand\Tman{T}
\newcommand\Uman{U}
\newcommand\Vman{V}

\newcommand\cov[1]{\tilde{#1}} %

\newcommand\tQman{\cov{\Qman}}

\newcommand\bC{\mathbb{C}}
\newcommand\bD{\mathbb{D}}
\newcommand\bN{\mathbb{N}}

\newcommand\bR{\mathbb{R}}
\newcommand\bZ{\mathbb{Z}}

\newcommand\id{\mathrm{id}}          
\newcommand\Int{\mathrm{Int}}        
\newcommand\rank{\mathsf{rank}}      
\newcommand\eps{\varepsilon}                   
\newcommand\leftmaspto{\leftarrow\!\shortmid}  
\newcommand\GL{\mathrm{GL}}

\newcommand\SO{\mathrm{SO}}
\newcommand\Ort{\mathrm{O}}

\newcommand\Aut{\mathrm{Aut}}       
\newcommand\Diff{\mathcal{D}}       
\newcommand\Isom{\mathrm{Isom}}     
 
\newcommand\DiffId{\Diff_{\id}}     
\newcommand\Cr[1]{\mathcal{C}^{#1}}
\newcommand\Cinfty{\mathcal{C}^{\infty}}
\newcommand\Crm[3]{\Cr{#1}\!\left(#2,#3\right)}
\newcommand\Cont[2]{\Crm{0}{#1}{#2}}                         
\newcommand\Ci[2]{\mathcal{C}^{\infty}(#1,#2)}               
\newcommand\Wtop{\mathsf{W}}
\newcommand\Stop{\mathsf{S}}
\newcommand\Wr[1]{\Wtop^{#1}}
\newcommand\Sr[1]{\Stop^{#1}}


\newcommand\fixsymbol{\mathrm{fix}}
\newcommand\invsymbol{}
\newcommand\nbsymbol{\mathrm{nb}}
\newcommand\folsymbol{*}


\newcommand\DiffInv[3][\empty]{\Diff_{\invsymbol}(#2,#3\ifx\empty #1\relax\else,#1\fi)}

\newcommand\DiffFix[3][\empty]{\Diff_{\fixsymbol}(#2,#3\ifx\empty #1\relax\else,#1\fi)}

\newcommand\DiffNb[3][\empty]{\Diff_{\nbsymbol}(#2,#3\ifx\empty #1\relax\else,#1\fi)}

\newcommand\DiffHFix[3][\empty]{\Diff^{0}_{\fixsymbol}(#2,#3\ifx\empty #1\relax\else,#1\fi)}

\newcommand\DiffHNb[3][\empty]{\Diff^{0}_{\nbsymbol}(#2,#3\ifx\empty #1\relax\else,#1\fi)}

\newcommand\DiffPlusFix[3][\empty]{\Diff^{+}_{\fixsymbol}(#2,#3\ifx\empty #1\relax\else,#1\fi)}

\newcommand\FDiff[2][\empty]{\Diff^{\folsymbol}(#2\ifx\empty #1\relax\else,#1\fi)}
\newcommand\FDiffFix[2][\empty]{\Diff^{\folsymbol}_{\fixsymbol}(#2\ifx\empty #1\relax\else,#1\fi)}
\newcommand\FDiffA[2][\empty]{\Diff^{=}(#2\ifx\empty #1\relax\else,#1\fi)}

\newcommand\VBAut[2][\empty]{\GL(#2\ifx\empty #1\relax\else,#1\fi)}

\newcommand\DiffLP{\Diff}  
\newcommand\DiffLPInv[3][\empty]{\DiffLP_{inv}(#2,#3\ifx\empty#1\relax\else,#1\fi)}

\newcommand\DiffLPFix[3][\empty]{\DiffLP_{fix}(#2,#3\ifx\empty#1\relax\else,#1\fi)}

\newcommand\DiffLPNb[3][\empty]{\DiffLP_{nb}(#2,#3\ifx\empty#1\relax\else,#1\fi)}


\newcommand\func{f}
\newcommand\gfunc{g}
\newcommand\dif{h}
\newcommand\gdif{g}


\newcommand\px{x}
\newcommand\py{y}
\newcommand\pz{z}
\newcommand\pu{u}
\newcommand\pv{v}

\newcommand\pvi[1]{\pv_{#1}}

\newcommand\Circle{S^1}\newcommand\TLman{\mathbf{\Lman}}             
\newcommand\Lpq[2]{L_{#1,#2}}                 
\newcommand\FolLpq[2]{\mathcal{F}_{#1,#2}}    

\newcommand\FLP{$\Foliation$-leaf preserving}

\newcommand\Ghom{{\mycolor{blue}G}}   

\newcommand\DXid{\mathcal{D}_0}
\newcommand\OBD{\Omega(\DiffId(\Bman),\id_{\Bman})}
\newcommand\OBDid{\Omega_0(\DiffId(\Bman),\id_{\Bman})}

\newcommand\vbp{{\mycolor{blue}p}}
\newcommand\vbq{{\mycolor{blue}q}}
\newcommand\hvbp{\tilde{\vbp}}

\newcommand\por{c}
\newcommand\pnor{c'}
\newcommand\qor{q_{\alpha}}
\newcommand\qnor{q_{\beta}}
\newcommand\qs{q_{\xi}}

\newcommand\DiffFixFQ{\DiffPlusFix[\apath]{\Foliation}{\Qman}}
\newcommand\DiffFixFQS{\DiffFix[\apath]{\Foliation}{\Qman\cup\Circle}}
\newcommand\DiffFixNFQS{\DiffHFix[\apath]{\Foliation}{\Qman\cup\Circle}}

\newcommand\apath{\eta}

\newcommand\bRnR{\bR\times\bR^{n}}
\newcommand\SRn{\Circle\times\bR^{n}}
\newcommand\StRn{\Circle\tilde{\times}\bR^{n}}

\newcommand\DiffInvMB{\DiffInv{\Mman}{\Bman}}
\newcommand\DiffInvLMB{\DiffInv[\vbp]{\Mman}{\Bman}}
\newcommand\DiffInvLUMB{\DiffInv[\vbp,\Uman]{\Mman}{\Bman}}
\newcommand\DiffFixMB{\DiffFix{\Mman}{\Bman}}
\newcommand\DiffNbMB{\DiffNb{\Mman}{\Bman}}
\newcommand\DiffFixLMB{\DiffFix[\vbp]{\Mman}{\Bman}}
\newcommand\DiffFixLUMB{\DiffFix[\vbp,\Uman]{\Mman}{\Bman}}

\newcommand\Foliation{\mathcal{F}}
\newcommand\GFoliation{\mathcal{G}}

\newcommand\aConst{0.2}
\newcommand\bConst{0.8}
\newcommand\oConst{1}

\newcommand\DiffFol{\Diff(\Foliation)}
\newcommand\DiffLFol{\DiffInv[\vbp]{\Foliation}{\Bman}}
\newcommand\DiffFixFolB{\DiffFix{\Foliation}{\Bman}}
\newcommand\DiffFixLFolB{\DiffFix[\vbp]{\Foliation}{\Bman}}

\newcommand\tang[2][\empty]{\mathsf{T}\ifx\empty\relax\else_{#1}\fi#2}  
\newcommand\tfib[1]{\tang[\!\mathsf{fib}]{#1}}                          


\newcommand\regNbh[1]{\Tman_{#1}}
\newcommand\Roo{\regNbh{\oConst}}
\newcommand\Ra{\regNbh{\aConst}}
\newcommand\Rb{\regNbh{\bConst}}

\newcommand\tregNbh[1]{\widetilde{\Tman}_{#1}}
\newcommand\tRoo{\tregNbh{\oConst}}

\newcommand\tRb{\tregNbh{\bConst}}

\newcommand\stimes{\tilde{\times}}
\newcommand\tdif{\tilde{\dif}}
\newcommand\tgdif{\tilde{\gdif}}
\newcommand\teta{\tilde{\eta}}
\newcommand\tfunc{\tilde{\func}}
\newcommand\tgfunc{\lambda}

\newcommand\uphi{\pu}
\newcommand\tuphi{\tilde{\pu}}
\newcommand\tphi{\tilde{\phi}}

\newcommand\DiffPR{\widetilde{\Diff}(\bR)}
\newcommand\Fld{F}
\newcommand\GFld{G}
\newcommand\Flow{\mathbf{F}}

\newcommand\ff[2]{{\mycolor{red}#2(#1)}} 





\newcommand\ggi[1]{\mathcal{G}_{#1}}

\newcommand\vba{a}
\newcommand\vbb{b}
\newcommand\vbc{c}

\newcommand\lift[1]{#1'}
\newcommand\llift[1]{#1''}

\newcommand\ufunc{u}
\newcommand\lufunc{\lift{u}}
\newcommand\llufunc{\llift{u}}

\newcommand\ldif{\lift{\dif}}
\newcommand\lldif{\llift{\dif}}

\newcommand\al{{\mycolor{red}w}}
\newcommand\az{{\mycolor{red}\pz}}
\newcommand\ACircle{\Lambda}
\newcommand\ADisk{Z}

\newcommand\as{{\mycolor{blue}s}}   
\newcommand\BRline{\lift{S}}
\newcommand\BDisk{\lift{\ADisk}}

\newcommand\arad{{\mycolor{brown}r}}
\newcommand\aphi{{\mycolor{brown}\phi}}
\newcommand\CRline{\llift{S}}
\newcommand\CRadius{\llift{R}}
\newcommand\CArg{\llift{\Phi}}

\newcommand\RCircle[1]{S_{#1}}   
\newcommand\RDisk[1]{D_{#1}}     
\newcommand\RTor[1]{\mathbf{T}_{#1}}     
\newcommand\RTD[1]{T_{#1}}     
\newcommand\RBd[1]{\mathbf{C}^{#1}}       


\newcommand\XTorus{\RTor{1}}
\newcommand\XBd{\RBd{\bConst}}
\newcommand\XC{\RTor{0}}

\newcommand\DiffFdT{\DiffFix{\Foliation}{\partial\XTorus}}

\newcommand\DiffFNbdT{\DiffFix{\Foliation}{\XBd}}
\newcommand\DiffFHNbdTC{\DiffHFix{\Foliation}{\XC\cup\XBd, \apath}}
\newcommand\DiffFHLNbdTC{\DiffHFix{\Foliation}{\XC\cup\XBd,\apath,\vbp, \RTor{\aConst}}}
\newcommand\DiffFHNbdTNbC{\Diff^{0,0}_{\fixsymbol}(\Foliation,\RTor{\aConst}\cup\XBd, \apath)}
\newcommand\DiffFNbdTNbC{\DiffFix{\Foliation}{\RTor{\aConst}\cup\XBd}}

\newcommand\DiffGJT{\DiffFix{\GFoliation}{\Jman\times T^2}}

\newcommand\ATor{\mathbf{T}}
\newcommand\BTor{\ATor'}

\newcommand\dATor{\partial\ATor}

\newcommand\AFoliation{\Foliation}
\newcommand\BFoliation{\Foliation'}

\newcommand\RNb[1]{\mathbf{N}^{#1}}

\newcommand\MTor{T^2}

\newcommand\DiffGdTA{\DiffFix{\FolLpq{p}{q}}{\MTor}}
\newcommand\DiffGdN{\DiffFix{\FolLpq{p}{q}}{\RNb{\bConst}}}
\newcommand\rmap{\rho}

\newcommand\affr[2]{\mathsf{h}_{#2,#1}}

\newcommand\affst[2]{\widetilde{\dif}_{#2,#1}}
\newcommand\afft[2]{\dif_{#2,#1}}
\newcommand\matrt{\eta}


\newcommand\stObj[1]{{\mycolor{red}#1'}}
\newcommand\lpqObj[1]{{\mycolor{red}#1''}}

\newcommand\tJincl{\mathsf{e}}
\newcommand\stJincl{\stObj{\tJincl}}
\newcommand\lpqJincl{\lpqObj{\tJincl}}

\newcommand\tRestr{\mathsf{r}}
\newcommand\stRestr{\stObj{\tRestr}}
\newcommand\lpqRestr{\tRestr}

\newcommand\hDtwist{d}
\newcommand\hLambda{\lambda}
\newcommand\hMu{\mu}
\newcommand\hTau{\tau}


\newcommand\AffSubgr{\widetilde{\mathcal{A}}} 
\newcommand\tAffSubgr{\mathcal{A}}            

\newcommand\GAffSubgr{\widetilde{\mathcal{T}}}   
\newcommand\stAffSubgr{\mathcal{T}}                 

\newcommand\stMapClGr{\mathcal{G}}   
\newcommand\lpqAffSubgr{\mathcal{L}}  

\newcommand\RotSub{\mathcal{R}}      

\newcommand\hhom{\zeta}

\newcommand\aGrp{A}
\newcommand\bGrp{B}
\newcommand\cGrp{C}
\newcommand\kGrp{K}

\newcommand\restr[2]{#1\vert_{#2}}
\newcommand\nrm[1]{\vert#1\vert}

\keywords{Foliation, diffeomorphism, homotopy type, lens space, solid torus}
\subjclass[2020]{
    57R30, 
    57T20
}

\begin{document}
\begin{abstract}
Let $T= S^1\times D^2$ be the solid torus, $\mathcal{F}$ the Morse-Bott foliation on $T$ into $2$-tori parallel to the boundary and one singular circle $S^1\times 0$, which is the central circle of the torus $T$, and $\mathcal{D}(\mathcal{F},\partial T)$ the group of diffeomorphisms of $T$ fixed on $\partial T$ and leaving each leaf of the foliation $\mathcal{F}$ invariant.
We prove that $\mathcal{D}(\mathcal{F},\partial T)$ is contractible.

Gluing two copies of $T$ by some diffeomorphism between their boundaries, we will get a lens space $L_{p,q}$ with a Morse-Bott foliation $\mathcal{F}_{p,q}$ obtained from $\mathcal{F}$ on each copy of $T$.
We also compute the homotopy type of the group $\mathcal{D}(\mathcal{F}_{p,q})$ of diffeomorphisms of $L_{p,q}$ leaving invariant each leaf of $\mathcal{F}_{p,q}$.
\end{abstract}

\maketitle

\section{Introduction}\label{sect:intro}
One of the main features and applications of algebraic topology and, especially, homotopy theory, to real world problems is that very often homotopy invariants of ``good'' spaces are discrete and can be described as certain properties ``stable under small perturbations''.

On the other hand, the concrete computation of homotopy types can be done in not so many cases.
Actually, the homotopy type of a CW-complex $X$ is, in principle, determined by Postnikov tower: one can associate to $X$ a certain ``canonical'' representative $\bar{X}$ being homotopy equivalent to $X$, see~\cite[\S4.3, p.~410]{Hatcher:AT:2002}.
However, that construction is more theoretical, since $\bar{X}$ is usually infinite dimensional, and to construct $\bar{X}$ one need to know all the homotopy groups of $X$, which we currently do not completely know even for spheres.
Real computations of homotopy types usually are done by other methods, like theory of coverings or fibre bundles described in standard textbooks, e.g.~\cite{Hatcher:AT:2002}.

For infinite dimensional spaces, e.g.\ spaces of maps (infinite dimensional manifolds), the situation is more complicated.
It is known that for a smooth compact manifold $X$ its group of $\Cinfty$ diffeomorphisms $\Diff(X)$ endowed with $\Cinfty$ Whitney topology is a Fr\'echet manifold, and therefore has the homotopy type of a CW-complex, see~\cite{Palais:Top:1966} for discussion of homotopy types of infinite dimensional manifolds.
Homotopy types of path components of $\Diff(X)$ are completely computed for all compact manifolds with $\dim X\leq 2$, \cite{Smale:ProcAMS:1959, EarleEells:BAMS:1967, EarleEells:JGD:1969, EarleSchatz:DG:1970, Gramain:ASENS:1973}.
There is also a lot of results about the homotopy types of $\Diff(X)$ for $\dim X = 3$, see e.g.~\cite{Hatcher:AnnM:1983, Gabai:JDG:2001, HongKalliongisMcCulloughRubinstein:LMN:2012} and references therein.
In higher dimensions it is known very little (just because that every finitely presented group is the fundamental group of some manifold, so the classification of compact manifolds contains classification of finitely presented groups), and the results are mostly concern with identifying certain non-trivial elements of homotopy groups, e.g.~\cite{Novikov:IzvAN:1965, Schultz:Top:1971, Hajduk:BP:1978, DwyerSzczarba:IJM:1983, Kupers:GT:2019, BerglundMadsen:AM:2020}, see also good reviews by N.~Smolentsev~\cite{Smolentsev:SMP:2006} and J.~Milnor~\cite{Milnor:NAMS:2011}.

Another important class of infinite dimensional spaces are groups of homeomorphisms and diffeomorphisms preserving leaves of a certain foliation.
Their homotopy types are studied even worse.
The most relevant results are obtained in the papers by and K.~Fukui and S.~Ushiki~\cite{FukuiUshiki:JMKU:1975} and K.~Fukui~\cite{Fukui:JJM:1976}.
They studied regular foliations on $3$-manifolds with finitely many Reeb components and proved that the identity path component of the group of foliated (sending leaves to leaves) diffeomorphisms has the homotopy type of a product of circles, see Remark~\ref{rem:Fukui} below.

Most of other results related with the structure of diffeomorphism groups of foliations concern with extensions of results by M.~Herman~\cite{Herman:CR:1971}, W.~Thurston~\cite{Thurston:BAMS:1974}, J.~Mather~\cite{Mather:Top:1971, Mather:BAMS:1974},  D.~B.~A.~Epstein~\cite{Epstein:CompMath:1970} on proving perfectness groups of compactly supported diffeomorphisms isotopic to the identity.
Recall that a group is perfect, if it coincides with its commutator subgroup $G=[G,G]$.
Then perfectness means triviality of the first homology group $H_1(K(G,1),\bZ)$ of the corresponding Eilenberg-MacLane space, and this is where the homotopy theory appears.
The technique developed in the latter result by Epstein~\cite{Epstein:CompMath:1970} was extended by W.~Ling~\cite{Ling:CM:1984} to a certain class of groups of homeomorphisms, see also~\cite{Anderson:AJM:1958}.
That allowed in turn to extend the above results on perfectness of diffeomorphism groups, to groups of leaf-preserving  homeomorphisms and diffeomorphisms of nonsingular foliations, see e.g.\
\cite{Rybicki:MonM:1995, Rybicki:SJM:1996, Rybicki:DM:1996, Rybicki:DGA:1999, Rybicki:DGA:2001, HallerTeichmann:AGAG:2003, AbeFukui:CEJM:2005, LechRybicki:BCP:2007} and references therein.

However for singular foliations their groups of diffeomorphisms are less studied, e.g.~\cite{Fukui:JMKU:1980, Rybicki:DM:1998, Maksymenko:OsakaJM:2011, LechMichalik:PMD:2013}.
One of the important class of singular foliations constitute the so-called \myemph{Morse-Bott} foliations, and in particular, foliations by level sets of Morse-Bott functions.
They play an important role in Hamiltonian dynamics and Poisson geometry, see e.g.~\cite{Fomenko:UMN:1989, VuNgoc:Top:2003,ScarduaSeade:JDG:2009, MafraScarduraSeade:JS:2014, Wiesendorf:JDG:2014, MartinezAlfaroMezaSarmientoOliveira:JDE:2016, MartinezAlfaroMezaSarmientoOliveira:TMNA:2018, EvangelistaSuarezTorresVera:JS:2019}.

Also in~\cite{Maksymenko:TA:2003} and \cite[Theorem~3.7]{Maksymenko:OsakaJM:2011} the second author gave rather wide sufficient conditions on a vector field $F$ on a manifold $X$ under which the identity path component $\DiffId(F)$ of the group of orbit-preserving diffeomorphisms of $F$ is either contractible or homotopy equivalent to the circle.
Given $\dif\in\DiffId(F)$ one can associate to each $x\in X$ the time $\alpha_{\dif}(x)$ between $x$ and $\dif(x)$ along its orbit of $F$, see Section~\ref{sect:smooth_shifts_along_orbits}.
Such a time $\alpha_{\dif}(x)$ is not uniquely defined, however it is shown that certain assumptions on zeros of $F$ and the fact that $\dif\in\DiffId(F)$ allows to choose $\alpha_{\dif}$ to be $\Cinfty$ and continuous in $\dif$.

That result as well as all the technique were applied further to study the homotopy types of stabilizers and orbits of functions $\func\in\Ci{X}{\bR}$ under the natural ``left-right'' action of the product $\Diff(X)\times\Diff(\bR)$ on the space $\Ci{X}{\bR}$ defined by the following rule: $(\dif,\phi)\cdot \func = \phi\circ\func\circ\dif$.
Notice that this action includes the natural ``right'' action of the subgroup $\Diff(X)\times\id_{\bR}$ on $\Ci{X}{\bR}$.
In~\cite{Maksymenko:BSM:2006} there were given wide conditions on $\func$ under which the inclusion of ``right'' stabilizers and orbits into the corresponding ``left-right'' stabilizers and orbits are homotopy equivalences.
Moreover, for the case $\dim X = 2$, the homotopy types of ``right'' stabilizers and orbits were almost completely computed, see e.g.~\cite{Maksymenko:AGAG:2006, MaksymenkoFeshchenko:MS:2015, Maksymenko:TA:2020, KuznietsovaMaksymenko:PIGC:2020, KuznietsovaSoroka:UMJ:2021}, and references therein.
However, the technique developed in the above papers is applicable only to $1$-dimensional foliations.
Let us also mention that E.~Kudryavtseva~\cite{Kudryavtseva:MatSb:1999, Kudryavtseva:SpecMF:VMU:2012, Kudryavtseva:MathNotes:2012, Kudryavtseva:MatSb:2013, Kudryavtseva:DAN:2016} studied the homotopy types of spaces of Morse functions on surfaces, and partially rediscovering the above results obtained a general structure of ``right'' orbits, see~\cite{Maksymenko:TA:2020} for discussion.

In recent three papers~\cite{KhokhliukMaksymenko:IndM:2020, KhokhliukMaksymenko:PIGC:2020, KhokhliukMaksymenko:2022} the present authors developed several techniques for computations of homotopy types of groups of diffeomorphisms of Morse-Bott and more general classes of ``singular'' foliations in higher dimensions.
In this paper we present explicit computations for some simplest Morse-Bott foliations on the solid torus and lens spaces,see Theorems~\ref{th:DFdT_full_variant} and~\ref{th:Lpq_full_variant}.

\subsection{Main result}
Let $\RCircle{\arad} = \{ \az\in \bC \mid \nrm{\az} = \arad \}_{\arad\in[0;1]}$ be the family of concentric circles in $\bC$ together with the origin $\RCircle{0} = 0$,
\[
    \ATor= \{ (\al,\az) \in \bC^2 \mid \nrm{\az} = 1, \ \nrm{\al} \leq 1 \} \cong \Circle\times D^2
\]
be the \myemph{solid torus} in $\bC^2$, $\MTor := \Circle\times\Circle$ be its boundary, and $\AFoliation = \{ \Circle \times \RCircle{\arad} \}_{\arad\in[0,1]}$ be the partition of $\ATor$ into $2$-tori parallel to the boundary and the central circle $\Circle\times 0$.

Equivalently, $\AFoliation$ is a partition into the level sets of the following Morse-Bott function $\func\colon \ATor\to\bR$, $\func(\al,\az) = \nrm{\az}^2$, for which the central circle $\Circle\times 0$ is a non-degenerate critical submanifold.

Say that a diffeomorphism $\dif:\ATor\to\ATor$ is \myemph{$\AFoliation$-foliated} if for each leaf $\omega\in\AFoliation$ its image $\dif(\omega)$ is a (possibly distinct from $\omega$) leaf of $\AFoliation$ as well.
Also $\dif$ is \myemph{$\AFoliation$-leaf preserving}, if $\dif(\omega)=\omega$ for each leaf $\omega\in\AFoliation$.
Define the following groups:
\begin{itemize}
\item $\Diff(\ATor)$ is the group of diffeomorphisms of $\ATor$;
\item $\Diff(\AFoliation)$ is the group of $\AFoliation$-leaf preserving diffeomorphisms of $\ATor$;
\item $\DiffFix{\ATor}{\MTor}$ is the subgroup of $\Diff(\ATor)$ consisting of diffeomorphisms fixed on $\MTor$;
\item $\DiffFix{\AFoliation}{\MTor} := \Diff(\AFoliation)  \cap \DiffFix{\ATor}{\MTor}$.
\end{itemize}
Endow all those groups with the corresponding $\Cinfty$ Whitney topologies.
Our main result is the following theorem proved in Section~\ref{sect:proof:th:DFdT_contr}:
\begin{subtheorem}\label{th:DFdT_contr}
The group $\DiffFix{\AFoliation}{\MTor}$ is weakly contractible, that is weakly homotopy equivalent to a point (i.e.\ all its homotopy groups vanish).
\end{subtheorem}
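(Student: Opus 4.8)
The plan is to detach $\DiffFix{\AFoliation}{\MTor}$ first from the boundary $\MTor$ and then from the singular circle $K_{0}:=\Circle\times 0$, reducing its homotopy type to that of the two pieces into which $\AFoliation$ splits along $K_{0}$: the regular part, a torus-cylinder carrying the product foliation by tori, and a germ at $K_{0}$, where $\AFoliation$ is the standard concentric foliation of the $\al$-disc times the $\az$-circle. To detach from $\MTor$: near the regular leaf $\MTor$ the foliation is the product $\MTor\times[0,\eps)$, so the restriction of $\dif\in\DiffFix{\AFoliation}{\MTor}$ to a collar is determined by a germ at $0$ of a path $t\mapsto g_{t}$ in $\Diff(\MTor)$ with $g_{0}=\id_{\MTor}$; the space of such germs is contractible (the homotopy $g^{(\tau)}_{t}=g_{(1-\tau)t}$ contracts it to the constant germ $\id$), and the restriction map is a fibration by a parametrised isotopy-extension argument in the foliated category. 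Hence $\DiffFix{\AFoliation}{\MTor}$ is weakly homotopy equivalent to the subgroup $\DiffNb{\AFoliation}{\MTor}$ of diffeomorphisms that equal $\id$ near $\MTor$.

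Next, restriction to the germ along $K_{0}$ yields, again by foliated isotopy extension near $K_{0}$, a fibration $\DiffNb{\AFoliation}{\MTor\cup K_{0}}\to\DiffNb{\AFoliation}{\MTor}\xrightarrow{\;j\;}\mathcal{B}$, where $\mathcal{B}$ is the space of germs along $K_{0}$ of $\AFoliation$-preserving diffeomorphisms of $\ATor$: every such germ extends over $\ATor$ to be $\id$ near $\MTor$ --- its restriction to a small torus leaf acts trivially on $H_{1}$ and is orientation-preserving, hence isotopic to $\id$, and one extends through the complementary torus-cylinder --- so $j$ is surjective. The fibre of $j$ consists of $\AFoliation$-preserving diffeomorphisms supported in the torus-cylinder $\overline{\ATor\setminus(\text{nbhd of }K_{0})}\cong\MTor\times[0,1]$ and equal to $\id$ near both ends, i.e.\ of smooth loops $r\mapsto g_{r}$ in $\Diff(\MTor)$ based at $\id$; since $\Diff_{0}(\MTor)\simeq\MTor$ by Earle--Eells, this fibre is weakly equivalent to the discrete group $\pi_{1}(\Diff_{0}(\MTor))\cong\bZ^{2}$ of isotopy classes of Dehn twists along a torus leaf in the longitudinal and meridional directions. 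The base $\mathcal{B}$, in turn, deformation retracts --- via Borel's lemma on jets and the convexity of the admissibility inequality for leaf-preserving diffeomorphisms of a disc fixed near its boundary (a leaf-preserving form of Smale's theorem $\Diff(D^{2},\partial D^{2})\simeq\{\ast\}$, which disposes of all jets of order $\ge2$) --- onto its linear part, the product of $\Diff_{+}(\Circle)$ (the restriction to $K_{0}$) with the space of degree-zero transverse framings along $K_{0}$; both factors are homotopy equivalent to $\Circle$, so $\mathcal{B}\simeq\Circle\times\Circle$, with $\pi_{0}(\mathcal{B})=0$ and $\pi_{1}(\mathcal{B})\cong\bZ^{2}$ generated by the loop that rotates $K_{0}$ and the loop that rotates the transverse disc.

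Since all higher homotopy of $\mathcal{B}$ and of the fibre vanishes, the theorem reduces to showing that the connecting homomorphism $\partial\colon\pi_{1}(\mathcal{B})\cong\bZ^{2}\to\pi_{0}(\text{fibre})\cong\bZ^{2}$ of this fibration is an isomorphism. For this I would lift the two generating loops of $\pi_{1}(\mathcal{B})$ explicitly: choosing a bump function $\chi$ with $\chi\equiv1$ near $0$ and $\chi\equiv0$ near $1$, the paths
\[
H_{s}(\al,\az)=\bigl(\al,\ \az\,e^{2\pi i s\,\chi(\nrm{\al})}\bigr),\qquad
H'_{s}(\al,\az)=\bigl(\al\,e^{2\pi i s\,\chi(\nrm{\al})},\ \az\bigr)
\]
lie in $\DiffNb{\AFoliation}{\MTor}$, their germs along $K_{0}$ trace the $K_{0}$-rotation loop and the transverse-disc-rotation loop respectively, they start at $\id$, and they end (at $s=1$) at the longitudinal and the meridional Dehn twist; hence $\partial$ sends a basis to a basis and is an isomorphism. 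By the exact sequence of the fibration, $\pi_{k}(\DiffNb{\AFoliation}{\MTor})=0$ for all $k$, and by the first step the same holds for $\DiffFix{\AFoliation}{\MTor}$.

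The principal obstacle is the ``soft'' machinery underlying the first two steps: one must prove that the relevant restriction maps are fibrations in the foliated category --- a parametrised isotopy-extension theorem for leaf-preserving diffeomorphisms of a Morse--Bott foliation near a regular leaf and near the singular circle --- and that the spaces of germs of such diffeomorphisms along $\MTor$ and along $K_{0}$ retract onto their finite-order parts, the latter resting on the convexity of the admissibility condition for leaf-preserving disc diffeomorphisms fixed near the boundary. It is precisely the singular circle that makes the computation come out trivial: a full Dehn twist along a torus leaf --- which keeps $\pi_{0}$ of the leaf-preserving diffeomorphism group of the product-foliated torus-cylinder equal to $\bZ^{2}$ --- can be unwound through the shrinking meridian circles collapsing at $K_{0}$, and this unwinding is exactly what the paths $H_{s},H'_{s}$ record.
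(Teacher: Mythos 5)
Your proposal takes a genuinely different route from the paper's. The paper (Section~\ref{sect:proof:th:DFdT_contr}) builds a chain of four nested subgroups $\ggi{4}\subset\ggi{3}\subset\ggi{2}\subset\ggi{1}\subset\ggi{0}=\DiffFix{\AFoliation}{\MTor}$ and proves each inclusion is a homotopy equivalence by an explicit deformation --- collar retraction (Lemma~\ref{lm:collar}), ``longitude unlooping'' (Theorem~\ref{th:DiffFixNFQS}), linearization near $K_{0}$ (Theorem~\ref{th:linearization_simpler}), and a final ``meridian unlooping'' --- and then identifies $\ggi{4}$ with the null-homotopic component of $\OBD$ for $\Bman=T^2$, which is weakly contractible (Lemma~\ref{lm:loop_spaces}). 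You instead organize the computation around a single fibration $\DiffNb{\AFoliation}{\MTor\cup K_{0}}\to\DiffNb{\AFoliation}{\MTor}\xrightarrow{j}\mathcal{B}$ onto the germ space along $K_{0}$, and reduce everything to the connecting map $\partial\colon\pi_1(\mathcal{B})\cong\bZ^2\to\pi_0(\text{fibre})\cong\bZ^2$ being an isomorphism, which your explicit lifts $H_s,H'_s$ do establish. This is a legitimate condensation: the paper's two unlooping steps become one $\partial$-computation. (A small typo: $\chi(\nrm{\al})$ should be $\chi$ applied to the radius $\nrm{\az}$, since $\nrm{\al}\equiv1$ on $\Circle$.)

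The gaps you flag are, however, genuine and not merely ``soft machinery.'' Germ spaces do not carry a convenient topology, so ``restriction to germ at $K_0$ is a fibration'' cannot be invoked as folklore; the paper never forms a germ space, instead deforming into the subgroup of diffeomorphisms that agree with a vector-bundle morphism on a \emph{fixed} star-like neighborhood (Theorem~\ref{th:linearization_simpler}\ref{enum:linsmp:Gext_glogal}), and it proves local triviality of a restriction map only for the \emph{codimension-one} leaf $\MTor$ (Lemma~\ref{lm:triv_nbh:loc_triv_restr}); for $K_{0}$, of codimension $2$, a separate argument would be needed. Likewise the identification $\mathcal{B}\simeq T^2$ is the real content, but ``Borel's lemma on jets'' plus ``convexity of the admissibility inequality'' plus a ``leaf-preserving Smale theorem'' is not a proof: what is actually needed is precisely the foliation-compatible linearization deformation of Theorem~\ref{th:linearization_simpler}, after which a germ is captured by the pair $\bigl(\restr{\dif}{K_0},\theta\bigr)$ with $\restr{\dif}{K_0}$ an orientation-preserving diffeomorphism of $\Circle$ and $\theta\colon\Circle\to U(1)$ of degree~$0$ (the degree constraint following, as you correctly observe, from the restriction to small torus leaves being isotopic to the identity). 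Supplying these two lemmas would complete your outline; they are essentially the content of Sections~\ref{sect:linearization_theorem} and~\ref{sect:vb_over_1manifolds}, i.e.\ the hard part of the paper.
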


Notice further that the homotopy types of the groups $\Diff(\ATor)$ and $\DiffFix{\ATor}{\MTor}$ are well known.
In fact, $\DiffFix{\ATor}{\MTor}$ is contractible, see~\cite{Ivanov:LOMI:1976}, while the homotopy type of $\Diff(\ATor)$ can be described as follows, see Section~\ref{sect:diff_of_T2} for details.
Consider the following subgroup of $\GL(2,\bZ)$:
\[
    \stMapClGr:=\left\{ \amatr{\eps}{0}{m}{\delta} \mid m\in\bZ, \, \eps,\delta\in\{\pm1\} \right\}.
\]
It naturally acts on $\bR^2$ by linear automorphisms and preserves the integral lattice $\bZ^2$.
Hence it yields an action on $2$-torus $\MTor = \bR^2/\bZ^2$ by its automorphisms as a Lie group, and so one can define the semidirect product $\stAffSubgr:=(\bR^2/\bZ^2)\rtimes\stMapClGr$ corresponding to that action.
Moreover, every element of $\stAffSubgr$ naturally extends to a certain $\AFoliation$-leaf preserving diffeomorphism of $\ATor$, so we get an inclusion $\tJincl:\stAffSubgr \subset \Diff(\AFoliation)$.
It is well known that the inclusion $\stAffSubgr \subset \Diff(\ATor)$ is a homotopy equivalence, see Lemma~\ref{lm:DiffFixTdT_contr} below.
As a consequence of Theorem~\ref{th:DFdT_contr} we get the following statement which will be proved in Section~\ref{sect:proof:th:DFdT_full_variant}:
\begin{subtheorem}\label{th:DFdT_full_variant}
Each of the following inclusions
\begin{align*}
&\stAffSubgr  \ \xmonoArrow{\tJincl} \ \Diff(\AFoliation)   \ \subset \ \Diff(\ATor), &
&\{\id_{\ATor}\} \ \subset \ \DiffFix{\AFoliation}{\MTor}  \ \subset \ \DiffFix{\ATor}{\MTor}
\end{align*}
is a weak homotopy equivalence.
In particular, every path component of $\Diff(\AFoliation)$ is weakly homotopy equivalent to $2$-torus $\RotSub$, $\pi_0\Diff(\AFoliation) \cong \stMapClGr$, while $\pi_k\DiffFix{\AFoliation}{\MTor}=0$ for all $k\geq0$.
\end{subtheorem}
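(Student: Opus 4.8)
The plan is to deduce everything from Theorem~\ref{th:DFdT_contr}, from the contractibility of $\DiffFix{\ATor}{\MTor}$ (\cite{Ivanov:LOMI:1976}), and from Lemma~\ref{lm:DiffFixTdT_contr}; given these inputs the argument is essentially formal. The right pair of inclusions is immediate: by Theorem~\ref{th:DFdT_contr} the space $\DiffFix{\AFoliation}{\MTor}$ is weakly contractible, so $\{\id_{\ATor}\}\subset\DiffFix{\AFoliation}{\MTor}$ is a weak homotopy equivalence and $\pi_k\DiffFix{\AFoliation}{\MTor}=0$ for all $k\geq0$; and since $\DiffFix{\ATor}{\MTor}$ is contractible, $\DiffFix{\AFoliation}{\MTor}\subset\DiffFix{\ATor}{\MTor}$ is a map between weakly contractible spaces, hence again a weak homotopy equivalence.

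For the left pair I would compare the two ``restriction to the boundary torus'' homomorphisms. As $\MTor$ is a leaf of $\AFoliation$, every $\dif\in\Diff(\AFoliation)$ maps $\MTor$ onto $\MTor$, so there are group homomorphisms $r\colon\Diff(\ATor)\to\Diff(\MTor)$ and its corestriction $r^{\AFoliation}\colon\Diff(\AFoliation)\to\Diff(\MTor)$; put $\mathcal B:=\IM(r)$ and $\mathcal B^{\AFoliation}:=\IM(r^{\AFoliation})\subseteq\mathcal B$, viewed as subgroups of $\Diff(\MTor)$. By the isotopy extension theorem — using an ordinary collar of $\MTor$ in $\ATor$ for $r$, and a foliated collar for $r^{\AFoliation}$, the latter being available since a neighbourhood of the boundary leaf is the product foliation $\MTor\times(1-\eps,1]$ with leaves $\Circle\times\RCircle{\arad}$ — both $r$ and $r^{\AFoliation}$ are locally trivial fibrations onto their images, with fibres $r^{-1}(\id_{\MTor})=\DiffFix{\ATor}{\MTor}$ and $(r^{\AFoliation})^{-1}(\id_{\MTor})=\Diff(\AFoliation)\cap\DiffFix{\ATor}{\MTor}=\DiffFix{\AFoliation}{\MTor}$. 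This yields a commutative ladder of fibrations, compatible with the inclusion $\Diff(\AFoliation)\subset\Diff(\ATor)$, whose map of fibres $\DiffFix{\AFoliation}{\MTor}\subset\DiffFix{\ATor}{\MTor}$ is a weak homotopy equivalence by the previous paragraph.

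The key step is to show $\mathcal B^{\AFoliation}=\mathcal B$. The subgroup $\mathcal B^{\AFoliation}\leq\Diff(\MTor)$ contains the image of $\stAffSubgr$, because every element of $\stAffSubgr$ extends to an $\AFoliation$-leaf preserving diffeomorphism of $\ATor$ (recalled in the introduction); and it contains the whole identity component $\Diff_0(\MTor)$, because for $g\in\Diff_0(\MTor)$ one picks an isotopy $g_t$ from $\id_{\MTor}$ to $g$ and a smooth $u\colon(1-\eps,1]\to[0,1]$ with $u\equiv0$ near $1-\eps$ and $u(1)=1$, and then the diffeomorphism equal to $(\py,\arad)\mapsto(g_{u(\arad)}(\py),\arad)$ on the foliated collar $\MTor\times(1-\eps,1]$ and to the identity elsewhere is $\AFoliation$-leaf preserving and restricts to $g$ on $\MTor$. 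On the other hand $\mathcal B$ is a union of path components of $\Diff(\MTor)$ with identity component $\Diff_0(\MTor)$ (an identity-isotopic diffeomorphism of $\MTor$ extends over $\ATor$, and conversely), and by Lemma~\ref{lm:DiffFixTdT_contr} together with the connectedness of $\DiffFix{\ATor}{\MTor}$ the image of $\stAffSubgr$ already meets every path component of $\mathcal B$. Hence the subgroup $\mathcal B^{\AFoliation}$, containing $\Diff_0(\MTor)$ and meeting every path component of $\mathcal B$, is all of $\mathcal B$.

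With $\mathcal B^{\AFoliation}=\mathcal B$ the ladder of fibrations has weakly contractible fibres and the identity on the bases, so comparing the homotopy exact sequences (five lemma) shows that $\Diff(\AFoliation)\subset\Diff(\ATor)$ is a weak homotopy equivalence; composing with the homotopy equivalence $\stAffSubgr\subset\Diff(\ATor)$ of Lemma~\ref{lm:DiffFixTdT_contr} and using the two-out-of-three property for weak equivalences, $\tJincl\colon\stAffSubgr\subset\Diff(\AFoliation)$ is one as well. Finally, $\stAffSubgr=(\bR^2/\bZ^2)\rtimes\stMapClGr$ has $\pi_0\cong\stMapClGr$ and every path component homeomorphic to the torus $\RotSub=\bR^2/\bZ^2$, so transporting along $\tJincl$ gives $\pi_0\Diff(\AFoliation)\cong\stMapClGr$ and each path component of $\Diff(\AFoliation)$ weakly homotopy equivalent to $\RotSub$, while $\pi_k\DiffFix{\AFoliation}{\MTor}=0$ was already observed. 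I expect the only non-routine points to be the foliated isotopy extension property for $r^{\AFoliation}$ (easy, the boundary leaf having a product-foliated neighbourhood) and the identification $\mathcal B^{\AFoliation}=\mathcal B$; the real analytic content, namely weak contractibility of $\DiffFix{\AFoliation}{\MTor}$, is exactly Theorem~\ref{th:DFdT_contr} and is taken for granted here.
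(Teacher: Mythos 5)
Your proposal is correct and follows essentially the same route as the paper: the right pair of inclusions follows immediately from Theorem~\ref{th:DFdT_contr} together with Ivanov's contractibility of $\DiffFix{\ATor}{\MTor}$, and the left pair is handled by comparing the two restriction-to-$\MTor$ fibrations (whose fibres $\DiffFix{\AFoliation}{\MTor}$ and $\DiffFix{\ATor}{\MTor}$ are weakly contractible) and invoking Lemma~\ref{lm:DiffFixTdT_contr}. The only packaging difference is that the paper feeds these inputs into the abstract Lemma~\ref{lm:whe_cond}, with Lemma~\ref{lm:triv_nbh:loc_triv_restr} supplying the foliated local triviality you sketch via a foliated collar, whereas you compare the fibration exact sequences directly after observing that the two restriction maps have the same image in $\Diff(\MTor)$.
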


Furthermore, gluing two copies of $\ATor$ (each equipped with the same foliation $\AFoliation$) via some diffeomorphism of $\MTor$, we will obtain a lens space $\Lpq{p}{q}$ and a foliation $\FolLpq{p}{q}$ on it into two singular circles and parallel $2$-tori.
We will call such a foliation \myemph{polar}.
Then Theorem~\ref{th:DFdT_contr} will also allow us to compute the homotopy type of the group $\Diff(\FolLpq{p}{q})$ of $\FolLpq{p}{q}$-leaf preserving diffeomorphisms.
Namely, the following statement holds:
\begin{subtheorem}\label{th:Lpq_hom_type}
The identity path component of $\Diff(\FolLpq{p}{q})$ is weakly homotopy equivalent to $2$-torus $T^2$, and
\begin{align*}
 \pi_0 \Diff(\FolLpq{p}{q}) =
 \begin{cases}
    \stMapClGr,           & \text{for } \Lpq{0}{1} = \Circle\times S^2, \\
    \bZ_2 \oplus \bZ_2,   & \text{for } \Lpq{1}{0} = S^3, \Lpq{2}{1} = \bR{P}^3, \\
    \bZ_2                 & \text{for } p>2.
 \end{cases}
\end{align*}
Moreover, if $p>2$ and $q^2\not=\pm1 (\bmod\ p)$, then the inclusion $\Diff(\FolLpq{p}{q})\subset\Diff(\Lpq{p}{q})$ is a weak homotopy equivalence.
\end{subtheorem}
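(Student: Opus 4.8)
The plan is to build the lens space $\Lpq{p}{q}$ as a union of two solid tori $\ATor_1 \cup_\phi \ATor_2$ glued along their common boundary $\MTor$ by a diffeomorphism $\phi$ realizing the gluing data $(p,q)$, with the foliation $\FolLpq{p}{q}$ restricting to $\AFoliation$ on each piece. I would first isolate a regular invariant neighborhood: let $\RNb{\bConst} \subset \Lpq{p}{q}$ be the union of the two (closed, leaf-saturated) collars of the separating torus $\MTor$, one coming from each $\ATor_i$; this is an invariant thickened torus $\Jman \times T^2$ foliated by parallel $2$-tori, and its complement is two disjoint solid tori $\ATor_1', \ATor_2'$ each carrying the standard foliation $\AFoliation$. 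Any $\FolLpq{p}{q}$-leaf-preserving diffeomorphism must permute the two singular circles and hence either preserve or swap the two solid tori; swapping is possible exactly when the gluing admits a self-homeomorphism exchanging the cores, which recovers the familiar dichotomy ($S^3$, $\bR P^3$, $S^1\times S^2$ versus $p>2$). The strategy is then to compare $\Diff(\FolLpq{p}{q})$ with the group of leaf-preserving diffeomorphisms that preserve the decomposition, via a restriction-to-the-neighborhood fibration, and to feed in Theorem~\ref{th:DFdT_contr} on each solid-torus summand.

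Concretely, I would set up the restriction map
\[
    \rmap\colon \Diff^{0}(\FolLpq{p}{q}) \longrightarrow \Diff(\GFoliation, \Jman\times T^2),
\]
where the target is the group of leaf-preserving diffeomorphisms of the foliated thickened torus (with no boundary conditions), and $\Diff^{0}$ denotes the identity component together with, if one likes, the component-preserving-the-decomposition part. The fiber over $\id$ is, up to the usual smoothing/straightening arguments, the product $\DiffFix{\AFoliation}{\MTor}_1 \times \DiffFix{\AFoliation}{\MTor}_2$ of leaf-preserving diffeomorphisms of the two solid tori fixed near their boundaries --- which by Theorem~\ref{th:DFdT_contr} is weakly contractible. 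Hence $\rmap$ is a weak homotopy equivalence onto its image, and the homotopy type of the identity component of $\Diff(\FolLpq{p}{q})$ equals that of the identity component of the leaf-preserving diffeomorphism group of the foliated $\Jman\times T^2$. For the latter, each leaf is a parallel $2$-torus on which a leaf-preserving diffeomorphism acts by an isotopy-to-the-identity diffeomorphism of $T^2$, so after the standard reduction (Lemma~\ref{lm:DiffFixTdT_contr}-type arguments giving $\Diff_{\id}(T^2)\simeq T^2$ by translations, and a Fukui-type straightening in the transverse $\Jman$-direction) the identity component deformation retracts onto the torus $\RotSub$ of fibrewise rotations. This gives the first assertion, $\Diff_{\id}(\FolLpq{p}{q}) \simeq T^2$.

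For $\pi_0$, I would analyze which components survive. A leaf-preserving diffeomorphism $\dif$ induces on the separating torus $\MTor \subset \Lpq{p}{q}$ a diffeomorphism preserving the circle foliation $\{\Circle\times\RCircle{\arad}\}$, hence --- on $\pi_0$ --- an element of the mapping-class group $\stMapClGr$ of the foliated torus; this element is further constrained by the requirement that it extend over \emph{both} solid tori compatibly with the gluing $\phi$. The subgroup of $\stMapClGr$ of such extendable classes is computed by a direct arithmetic of $2\times 2$ integer matrices conjugated by the gluing matrix determined by $(p,q)$: one gets all of $\stMapClGr$ in the degenerate case $\Lpq{0}{1}=\Circle\times S^2$ (where the ``solid tori'' are $S^2\times\Circle$-halves and the core circles are not linked), $\bZ_2\oplus\bZ_2$ for $S^3$ and $\bR P^3$ (generated by an orientation-reversal and by the core-swap), and only $\bZ_2$ for $p>2$ (the core-swap being obstructed, leaving the single orientation-type involution). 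Combining with the contractibility of the fibers over each component, $\pi_0\Diff(\FolLpq{p}{q})$ is exactly this arithmetic subgroup, giving the stated case list. Finally, for $p>2$ with $q^2\not\equiv\pm1\pmod p$ the lens space admits no self-diffeomorphism swapping the two Heegaard cores and, by the known computation of $\pi_0\Diff(\Lpq{p}{q})$ and $\Diff_{\id}(\Lpq{p}{q})\simeq S^1$ (Bonahon, Hodgson--Rubinstein), the inclusion $\Diff(\FolLpq{p}{q})\subset\Diff(\Lpq{p}{q})$ is a bijection on $\pi_0$ and, since both identity components are homotopy equivalent to tori/circles that match up under the rotation subgroup, a weak homotopy equivalence.

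\textbf{Main obstacle.} The delicate point is not the fibration bookkeeping (that is the standard package from \cite{KhokhliukMaksymenko:IndM:2020} together with Theorem~\ref{th:DFdT_contr}) but rather the two places where one must show a map is \emph{onto} the right component: first, that the restriction $\rmap$ hits every leaf-preserving diffeomorphism of $\Jman\times T^2$ that one claims is in the image --- i.e.\ an extension/isotopy-extension argument respecting leaves near the singular circles, where the foliation degenerates and the earlier solid-torus analysis must be invoked carefully; and second, the purely arithmetic determination of exactly which mapping classes in $\stMapClGr$ are realized after conjugating by the gluing matrix of $(p,q)$, since this is what produces the split into the four cases and the condition $q^2\not\equiv\pm1\pmod p$. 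I expect the extension argument near the singular circles to be the real technical heart.
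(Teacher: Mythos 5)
Your overall strategy — fiber the leaf-preserving diffeomorphism group over a ``torus piece'' near the separating torus, use weak contractibility of the fiber (via Theorem~\ref{th:DFdT_contr}) to reduce to that piece, and then determine $\pi_0$ arithmetically, finishing with the Smale conjecture for the last clause — is essentially the paper's. The paper, however, restricts directly to the single separating \emph{leaf} $\MTor$ rather than to a thickened torus: by Lemma~\ref{lm:triv_nbh:loc_triv_restr} the map $\lpqRestr\colon\Diff(\FolLpq{p}{q})\to\Diff(\MTor)$ is a locally trivial fibration onto its image with fiber $\DiffGdTA$, which by linearization (Theorem~\ref{th:linearization_simpler}) retracts onto the subgroup fixed on a bicollar and then splits as a product of two copies of the contractible group from Theorem~\ref{th:DFdT_contr}. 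That choice also lets the paper immediately invoke Lemma~\ref{lm:hom_type_DiffT2} for the base, and Lemma~\ref{lm:whe_cond} to glue the identifications together. Your restriction to $\Jman\times T^2$ should in the end give the same information, but its fibration property is not supplied by any lemma in the paper and would need its own argument; it also complicates the base since $\Diff(\GFoliation,\Jman\times T^2)$ is a path space rather than $\Diff(T^2)$.

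Beyond that organizational difference there are several genuine mistakes that must be addressed. First, you write that a leaf-preserving diffeomorphism ``must permute the two singular circles and hence either preserve or swap the two solid tori,'' and you attribute one $\bZ_2$ in $\pi_0$ to a ``core-swap.'' This is incorrect: $\Diff(\FolLpq{p}{q})$ consists of diffeomorphisms that send each leaf to \emph{itself}, and the two singular circles are distinct leaves, so every element preserves each singular circle and each Heegaard solid torus. No core-swap lies in this group. The actual source of the case distinction is the arithmetic intersection $\stMapClGr\cap A^{-1}\stMapClGr A$ inside $\GL(2,\bZ)$, i.e., the automorphisms of $H_1(\MTor)$ that extend over \emph{both} solid tori; see the paper's computation around~\eqref{equ:MapClGrp_Lpq} and~\eqref{equ:identity_lpq}. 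You get the right group sizes, but the geometric interpretation of the generators is wrong, which could mislead the subsequent arithmetic in a less symmetric example. Second, the ``circle foliation $\{\Circle\times\RCircle{\arad}\}$'' on $\MTor$ does not exist: $\MTor$ is a single leaf of $\FolLpq{p}{q}$, so the restricted foliation there is trivial; the constraint on $\restr{\dif}{\MTor}$ is purely that its $H_1$-action lie in $\stMapClGr\cap A^{-1}\stMapClGr A$, which comes from extendability over the two solid tori, not from preservation of a structure on $\MTor$. Third, for $p>2$ with $q^2\not\equiv\pm1\pmod p$ you assert $\DiffId(\Lpq{p}{q})\simeq S^1$; in fact the identity component of the isometry group (a maximal $T^2\subset\SO(4)$ descended to the quotient) is a $2$-torus, and by the Smale conjecture for elliptic $3$-manifolds $\DiffId(\Lpq{p}{q})\simeq T^2$. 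Your claimed $S^1$ would contradict the weak homotopy equivalence you are trying to prove, since the identity component of $\Diff(\FolLpq{p}{q})$ is $\simeq T^2$ by your own fibration argument.
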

In fact, in Section~\ref{sect:proof:th:DFdT_contr} we will obtain a more explicit description of the homotopy type of $\Diff(\FolLpq{p}{q})$, see Theorem~\ref{th:Lpq_full_variant}.

\begin{subremark}\label{rem:Fukui}\rm
Notice that the above foliations $\AFoliation$ and $\FolLpq{p}{q}$ are in some sense <<dual>> to the ones considered by K.~Fukui and S.~Ushiki~\cite{FukuiUshiki:JMKU:1975} and K.~Fukui~\cite{Fukui:JJM:1976}.
Namely, a solid torus $\ATor$ admits a Reeb foliation $\mathcal{R}$ having  $\partial\ATor$ as a leaf and being transversal to $\AFoliation$ in the interior of $\ATor$.
Further, let $\Lpq{p}{q}$ be a lens space obtained by gluing two copies $\ATor_0$ and $\ATor_1$ of $\ATor$ via some diffeomorphism between their boundaries.
Then that Reeb foliation $\mathcal{R}$ of each $\ATor_i$ gives a foliation $\mathcal{R}_{p,q}$ of the corresponding lens space $\Lpq{p}{q}$ which has a joint leaf $\partial\ATor_0=\partial\ATor_1$ with $\FolLpq{p}{q}$ and is transversal to other leaves of $\FolLpq{p}{q}$.
\end{subremark}

\subsection*{Structure of the paper}
For the proof of contractibility of $\ggi{0}=\DiffFix{\AFoliation}{\MTor}$ in Theorem~\ref{th:DFdT_contr} we will construct four nested subgroups $\ggi{4}\subset\ggi{3}\subset\ggi{2}\subset\ggi{1}\subset\ggi{0}$ and show that the inclusions $\ggi{i+1}\subset\ggi{i}$, $i=0,1,2,3$, are homotopy equivalences, while the smallest group $\ggi{4}$ is weakly contractible.
Construction of a deformation of $\ggi{i}$ into $\ggi{i+1}$, $i=0,1,2,3$, requires a specific technique, and in each case we will establish a more general result
which holds for foliations by level sets of arbitrary smooth and definite homogeneous functions.

Section~\ref{sect:preliminaries} contains several notations and constructions used throughout the paper and also certain general results about fiberwise definite homogeneous functions on vector bundles.
Let $\vbp:\Eman\to\Bman$ be a vector bundle over a smooth manifold $\Bman$ which we identify with its image in $\Eman$ under zero section.
In Section~\ref{sect:linearization_theorem} we prove a ``linearization'' Theorem~\ref{th:linearization_simpler} allowing to isotopy diffeomorphisms of the pair $(\Eman,\Bman)$ to diffeomorphisms coinciding with some vector bundle morphisms near $\Bman$.
That statement is a particular case of a recent preprint~\cite{KhokhliukMaksymenko:2022} by the authors, and we present here another and independent proof.
It will be used for the proof of homotopy equivalence $\ggi{3}\subset\ggi{2}$.

In Section~\ref{sect:triv_vb_1} we also obtain several consequences of linearization theorem for vector bundles of rank $1$, and, in particular, Lemma~\ref{lm:collar} used for the inclusion $\ggi{1}\subset\ggi{0}$, and Lemma~\ref{lm:triv_nbh:loc_triv_restr} used in the proof of Theorem~\ref{th:Lpq_full_variant}.
Also, in Section~\ref{sect:product_fol}, we recall the results from our paper~\cite{KhokhliukMaksymenko:PIGC:2020} allowing to prove weak contractibility of $\ggi{4}$, see  Lemma~\ref{sect:product_fol}.

In Section~\ref{sect:vb_over_1manifolds} we prove Theorem~\ref{th:DiffFixNFQS} for the case $\Bman=\Circle$ which allows to ``unloop diffeomorphisms from $\Diff(\AFoliation)$ along the longitude'' and prove homotopy equivalence $\ggi{2}\subset\ggi{1}$.

The proofs of Theorem~\ref{th:DFdT_contr} and of the remained inclusion $\ggi{4}\subset\ggi{3}$ are given in Section~\ref{sect:proof:th:DFdT_contr}.

In Section~\ref{sect:diff_of_T2} we recall the description of the homotopy types of groups $\Diff(T^2)$, $\Diff(\ATor)$, $\DiffFix{\ATor}{\MTor}$, see Lemmas~\ref{lm:hom_type_DiffT2}, \ref{lm:DiffFixTdT_contr}, and in particular, prove deduce Theorem~\ref{th:DFdT_full_variant} from Theorem~\ref{th:DFdT_contr}, see Section~\ref{sect:proof:th:DFdT_full_variant}.

Finally, in Section~\ref{sect:diff_lens_spaces}, we compute the homotopy type of $\Diff(\FolLpq{p}{q})$, and prove Theorem~\ref{th:Lpq_full_variant} being a detailed variant of Theorem~\ref{th:Lpq_hom_type}.

\section{Preliminaries}\label{sect:preliminaries}

For a smooth compact manifold $\Mman$ we will denote by $\DiffId(\Mman)$ the identity path component of the group $\Diff(\Mman)$ of its diffeomorphisms with respect to $\Cinfty$ Whitney topology.
Also the arrows $\monoArrow$ and $\epiArrow$ will mean \myemph{monomorphism} and \myemph{epimorphism} respectively.

\subsection{Deformations}\label{sect:deformations}
Let $X$ be a topological space and $H\colon X\times[0;1]\to X$ be a homotopy.
A subset $A\subset X$ will be called \myemph{invariant under $H$} if $H(A\times[0;1]) \subset A$.
Then is $H$ a \myemph{deformation of $X$ into $A$} if $H_0=\id_{X}$, $H(A\times[0;1])\subset A$, and $H_1(X)\subset A$.
It is well known and is easy to see that in this case the corresponding inclusion map $i\colon A\subset X$ is a homotopy equivalence and the map $H_1\colon X \to A$ is its homotopy inverse.

Moreover, if $B\subset X$ is another subset being invariant under $H$, then the restriction $\restr{H}{B\times[0;1]}\colon B\times[0;1] \to B$ is a deformation of $B$ into $A\cap B$, whence the inclusion $A\cap B \subset B$ is a homotopy equivalence as well.
In that case $H$ can be regarded as a \myemph{deformation of the pair $(X,B)$ into the pair $(A,A\cap B)$}, so the inclusion $(A,A\cap B)\subset(X,B)$ is a homotopy equivalence of pairs.
We will use this observation several times.

\subsection{Notation for several diffeomorphism groups}\label{sect:diff_groups}
Throughout the paper the term \myemph{smooth} means $\Cinfty$ and all manifolds and diffeomorphisms are assumed to be smooth if the contrary is not said.
Also all spaces of smooth maps, and in particular, diffeomorphism groups, are endowed with $\Cinfty$ Whitney topology.

Let $\Mman$ be a manifold.
Then for each subset $\Bman\subset\Mman$ we will use the following notations%
\footnote{Those notations are used in different parts of the paper, so the reader may skip this subsection and refer to it on necessity.}%
:
\begin{itemize}[wide]
\item
$\DiffInvMB = \{ \dif\in\Diff(\Mman) \mid \dif(\Bman)=\Bman \}$ is the group of diffeomorphisms of $\Mman$ leaving $\Bman$ invariant;

\item
$\DiffFixMB$ is the group of diffeomorphisms of $\Mman$ fixed on $\Bman$;

\item
$\DiffNbMB$ is the group of diffeomorphisms of $\Mman$ fixed on some neighborhood of $\Bman$ (which may vary for distinct diffeomorphisms).
\end{itemize}

Moreover, suppose $\Bman$ is a proper submanifold of $\Mman$, i.e.\ $\partial\Bman=\Bman\cap\partial\Mman$, and $\vbp\colon\Eman\to\Bman$ is a regular neighborhood of $\Bman$, i.e.\ a retraction of an open neighborhood $\Eman \subset\Mman$ of $\Bman$ endowed with a structure of a vector bundle.
Then, in addition,
\begin{itemize}[wide]
\item
$\DiffInvLMB$ is the subgroup of $\DiffInvMB$ consisting of diffeomorphisms $\dif$ which coincide with some vector bundle morphism $\hat{\dif}\colon\Eman\to\Eman$ on \myemph{some} neighborhood of $\Bman$ (depending on $\dif$); note that $\DiffNbMB\subset\DiffFixLMB$ since every diffeomorphism $\dif$ fixed near $\Bman$ coincides thus with the identity automorphism $\id_{\Eman}$ of $\vbp$;

\item
$\DiffFixLMB := \DiffInvLMB \cap \DiffFixMB$;

\item
$\DiffInvLUMB$ is the \myemph{subset}%
\footnote{In general, it is not a group, since $\Uman$ might not be not invariant under diffeomorphisms from $\DiffInvLMB$}
of $\DiffInvLMB$ consisting of diffeomorphisms $\dif$ which coincide with some vector bundle morphism $\hat{\dif}\colon\Eman\to\Eman$ on a \myemph{given} neighborhood $\Uman$ of $\Bman$;

\item
$\DiffFixLUMB := \DiffInvLUMB \cap \DiffFixMB$.
\end{itemize}

Finally, let $\Qman\subset\Mman$ be a subset and $\apath\colon[0;1]\to\Mman$ a path such that $\apath(0), \apath(1)\in\Qman$.
\begin{itemize}[wide]
\item
Then $\DiffHFix{\Mman}{\Qman,\apath}$ will denote the subgroup of $\DiffFix{\Mman}{\Qman}$ consisting of diffeomorphisms for which the paths $\apath$ and $\dif\circ\apath$ (having common ends) are homotopic relatively their ends.

\item
We also put
\begin{gather*}
    \DiffHFix[\vbp]{\Mman}{\Qman\cup\Bman,\apath} := \DiffHFix{\Mman}{\Qman,\apath} \cap \DiffFixLMB, \\
    \DiffHFix[\vbp,\Uman]{\Mman}{\Qman\cup\Bman,\apath} := \DiffHFix{\Mman}{\Qman,\apath} \cap \DiffFixLUMB.
\end{gather*}

\item
If, in addition, $\apath\bigl((0;1)\bigr) \subset \Mman\setminus\Qman$, then $\DiffHNb{\Mman}{\Qman,\apath}$ will denote the subgroup of $\DiffNb{\Mman}{\Qman}$ consisting of diffeomorphisms $\dif$ for which the \myemph{open} paths $\apath, \dif\circ\apath\colon(0;1) \to\Mman\setminus\Qman$ are homotopic (as paths into $\Mman\setminus\Qman$) relatively to $(0;\eps] \cup[1-\eps;1)$ for some $\eps>0$;
\end{itemize}

Evidently, if $\Bman\subset\Qman$, $\eta\colon[0;1]\to\Mman$ is a path such that $\eta(0),\eta(1)\in\Qman$, and $\Uman$ is a neighborhood of $\Bman$, then we have the following commutative diagram of inclusions:
\begin{equation}\label{equ:diff_inclusions}
\gathered
\xymatrix@R=2.8ex@C=3ex{
    \DiffHNb{\Mman}{\Qman,\apath}         \ar@{^(->}[r] \ar@{^(->}[d] &
    \DiffHFix[\vbp]{\Mman}{\Qman,\apath}  \ar@{^(->}[r] \ar@{^(->}[d] &
    \DiffHFix{\Mman}{\Qman,\apath}        \ar@{^(->}[d] \\
    \DiffNbMB     \ar@{^(->}[r] &
    \DiffFixLMB   \ar@{^(->}[r] \ar@{^(->}[d] &
    \DiffFixMB    \ar@{^(->}[d] \\
    &
    \DiffInvLMB   \ar@{^(->}[r] &
    \DiffInvMB
}
\endgathered
\end{equation}

\subsection{Partitions}\label{sect:partition}
Let $\Foliation$ be a partition of a manifold $\Mman$.
The elements of $\Foliation$ will also be called \myemph{leaves}.
Given a subset $\Uman\subset\Mman$, we will denote by $\restr{\Foliation}{\Uman}$ the partition of $\Uman$ into the intersections $\omega\cap\Uman$ of leaves of $\Foliation$ with $\Uman$.
Also $\Uman$ is called \myemph{$\Foliation$-saturated}, if $\Uman$ is a union of leaves of $\Foliation$.
A map $\dif\colon\Uman\to\Mman$ will be called
\begin{itemize}
\item
\myemph{$\Foliation$-foliated} whenever for each $\omega\in\Foliation$ the image $\dif(\omega\cap\Uman)$ is contained in some (possibly distinct from $\omega$) leaf of $\Foliation$;
\item
\myemph{\FLP} if $\dif(\omega\cap\Uman) \subset\omega$ for all $\omega\in\Foliation$.
\end{itemize}

Denote by $\DiffFol$ be the group of \FLP\ diffeomorphisms of $\Mman$.

If now $\Diff^{*}_{\bullet}(\Mman,\Bman,\star)$ is one of the above groups, then we denote by $\Diff^{*}_{\bullet}(\Foliation,\Bman,\star)$ the intersection $\DiffFol\cap\Diff^{*}_{\bullet}(\Mman,\Bman,\star)$, i.e.\ just replace $\Mman$ with $\Foliation$ in the notations.
For instance, $\DiffFixFolB := \DiffFol \cap \DiffFixMB$ is the group of \FLP\ diffeomorphisms fixed on $\Bman$, $\DiffFixLFolB := \DiffFol \cap \DiffFixLMB$, and so on.

Notice that if $\Uman \subset \Mman$ is an $\Foliation$-saturated neighborhood of $\Bman$, then $\Diff(\Foliation,\Bman,\vbp,\Uman)$ is a \myemph{group}.

\subsection{Tangent maps along fibers}\label{sect:tangent_map_along_fibers}
Let $\vbp\colon\Eman\to\Bman$ be a smooth vector bundle of rank $n$ over a compact manifold $\Bman$.
We will identify $\Bman$ with the image of zero section of $\vbp$ in $\Eman$.
For every $t>0$ it will be convenient to consider the following \myemph{homothety} map $\delta_t\colon\Eman\to\Eman$, $\delta_t(\px) = t\px$.
A subset $\Uman\subset\Eman$ will be called \myemph{star-like} if $\delta_t(\Uman)\subset\Uman$ for all $t\in[0;1]$.

Let $\dif\colon\Eman\to\Eman$ be a $\Cr{r}$ map, $r\geq1$, such that $\dif(\Bman)\subset\Bman$.
Then there is a well-defined $\Cr{r-1}$ vector bundle morphism $\tfib{\dif}\colon\Eman\to\Eman$ given by
\begin{equation}\label{equ:tfib}
\tfib{\dif}(\px)
    = \lim\limits_{t\to0}\delta_{t}^{-1} \circ \dif\circ\delta_{t}(\px)
    = \lim\limits_{t\to0}\tfrac{1}{t} \dif(t\px), \ \px\in\Eman.
\end{equation}
We will call it the \myemph{tangent map of $\dif$ along $\Bman$ in the direction of fibers of $\vbp$}, see e.g.~\cite{KhokhliukMaksymenko:2022} for discussion.

For instance, if $\Bman$ is a point, so $\Eman=\bR^{n}$, and $\dif\colon\Eman\to\Eman$ is a map such that $\dif(0)=0$, then $\tfib{\dif}\colon\bR^{n}\to\bR^{n}$ is the linear map given by the Jacobi matrix of $\dif$ at $0$.

It is easy to show that if $\dif$ is a diffeomorphism, then $\tfib{\dif}$ is a vector bundle isomorphism.
Moreover, we actually have a $\Cr{r-1}$-homotopy $H\colon[0;1]\times\Eman\to\Eman$,
\begin{equation}\label{equ:lin_homotopy}
H(t,\px) =
\begin{cases}
    \tfrac{1}{t} \dif(t\px), & t>0, \\
    \tfib{\dif}(\px), & t=0.
\end{cases}
\end{equation}
which is $\Cr{r}$ on $(0;1]\times\Eman$.

\subsection{Homogeneous functions on vector bundles}\label{sect:homogeneous_funcs_on_vb}
A continuous function $\func\colon\Eman\to\bR$ is \myemph{homogeneous} of (possibly fractional) degree $k>0$ if $\func(t \px) = t^{k}\px$ for all $t>0$ and $\px\in\Eman$, which can be rephrased so that $\func\circ\delta_t = \delta_{t^k}\circ\func$.
Evidently, for every homogeneous function $\func\colon\Eman\to\bR$ we have that $\Bman\subset\func^{-1}(0)$, however, in general, $\Bman\not=\func^{-1}(0)$.
A homogeneous function $\func\colon\Eman\to\bR$ will be called \myemph{definite}, whenever $\func(\px)>0$ for all $\px\in\Eman\setminus\Bman$.

For instance, if the vector bundle $\vbp\colon\Eman\to\Bman$ is endowed with an orthogonal structure, i.e.\ its structure group is orthogonal, then the corresponding norm $\|\cdot\|\colon\Eman\to[0;+\infty)$ is definite homogeneous of degree $1$.
Let $\Sman:=\{\px\in\Eman \mid \|\px\|=1\}$.
Then the projection $\restr{\vbp}{\Sman}\colon\Sman\to\Bman$ is called the \myemph{spherical bundle} associated with (the orthogonal structure of) $\vbp$.

The following lemma extends~\cite[Lemma~36]{Maksymenko:TA:2003} and is also a variation of~\cite[Lemma~5.1.3]{KhokhliukMaksymenko:2022}.
\begin{sublemma}[\rm{c.f.~\cite[Lemma~36]{Maksymenko:TA:2003}, \cite[Lemma~5.1.3]{KhokhliukMaksymenko:2022}}]
\label{lm:homo_func}
Let $\func\colon\Eman\to\bR$ be a homogeneous function of some degree $k$, and $\Foliation$ be the partition of $\Eman$ into connected components of level sets of $\func$.
Then the following statements hold.
\begin{enumerate}[leftmargin=*]
\item\label{enum:hf:delta_t_foliated}
For every $t>0$ the homothety map $\delta_t\colon\Eman\to\Eman$ is $\Foliation$-foliated.

\item\label{enum:hf:h}
Let $\dif\colon\Eman\to\Eman$ be a $\Cr{1}$ map such that $\dif(\Bman)\subset\Bman$.
For each $t>0$ define the map $\dif_t\colon\Eman\to\Eman$ by $\dif_t(\px) = \delta_t^{-1}\circ\dif\circ\delta_t(\px) = \tfrac{1}{t}\dif(t\px)$, so $\tfib{\dif}(\px) =\lim\limits_{t\to0}\dif_t(\px)$.

\begin{enumerate}[leftmargin=*, label={\rm(\alph*)}, ref={\rm(2\alph*)}]
\item\label{enum:hf:h:flp}
If $\func\circ\dif=\func$, so $\dif$ preserves partition of $\Eman$ into level sets of $\func$, then $\func\circ\dif_t=\func$ for all $t>0$, and therefore $\func\circ\tfib{\dif}=\func$ as well.

\item\label{enum:hf:h:fol}
If $\dif$ is \FLP, then so is $\dif_t$ for $t>0$.
If, in addition, the leaves of $\Foliation$ are closed, then $\tfib{\dif}$ is also \FLP.
\end{enumerate}

\item\label{enum:hf:definite}
If $\func\colon\Eman\to\bR$ is definite, then $\func$ has the following additional properties.
\begin{enumerate}[leftmargin=*, label={\rm(\alph*)}, ref={\rm(3\alph*)}]
\item\label{enum:defhom:level_0}
$\func^{-1}(0)=\Bman$.

\item\label{enum:defhom:level_t}
For every $t>0$ the projection $\vbp:\func^{-1}(t) \to \Bman$ is isomorphic to the spherical bundle $\vbp:\Sman\to\Bman$.
In other words, there exists a homeomorphism $\zeta:\func^{-1}(t) \to \Sman$ such that $\vbp\circ\zeta=\vbp$.
In particular, the path components of $\func^{-1}(t)$, i.e.\ the leaves of $\Foliation$, are closed.
Moreover, if $n = \rank\Eman = 1$ and $\vbp$ is a trivial vector bundle, then $\func^{-1}(t)$ consists of two path components.
In all other cases, $\func^{-1}(t)$ is path connected.

\item\label{enum:defhom:C1}
If $\func$ is $\Cr{1}$, then $\Bman$ coincides with the set of critical points of $\func$.
\end{enumerate}
\end{enumerate}
\end{sublemma}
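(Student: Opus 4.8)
The plan is to derive all the assertions from the single scaling identity $\func(t\px)=t^{k}\func(\px)$ for $t>0$, together with elementary point-set topology, treating the parts in the order stated. For statement (1), I note that $\delta_t$ is a homeomorphism carrying the level set $\func^{-1}(c)$ onto $\func^{-1}(t^{k}c)$, hence mapping connected components of the former onto connected components of the latter; this is precisely the assertion that $\delta_t$ is $\Foliation$-foliated. For statement (2a), since $\dif_t=\delta_t^{-1}\circ\dif\circ\delta_t$ and $\func\circ\delta_t^{-1}=t^{-k}\func$, $\func\circ\delta_t=t^{k}\func$, a one-line computation using $\func\circ\dif=\func$ gives $\func\circ\dif_t=\func$; letting $t\to0$ and using continuity of $\func$ yields $\func\circ\tfib{\dif}=\func$. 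For statement (2b), the preliminary observation is that, by (1), $\delta_t$ sends a leaf $\omega$ into a single leaf $\omega'$, and that $\delta_t^{-1}(\omega')$, being a connected subset of a level set of $\func$ which meets $\omega=\delta_t^{-1}(\delta_t(\omega))$, must lie inside $\omega$; hence, if $\dif$ is $\Foliation$-leaf preserving, $\dif_t(\omega)=\delta_t^{-1}(\dif(\delta_t(\omega)))\subset\delta_t^{-1}(\dif(\omega'))\subset\delta_t^{-1}(\omega')\subset\omega$, so $\dif_t$ is $\Foliation$-leaf preserving as well. Then $\dif_t(\px)\in\omega$ for every $\px\in\omega$ and every $t>0$, so if $\omega$ is closed the limit $\tfib{\dif}(\px)$ also lies in $\omega$, which gives the last claim.

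For part (3) assume $\func$ definite. Assertion (3a) is immediate: $\delta_t$ fixes the zero vector $b$ over any point of $\Bman$, so $\func(b)=t^{k}\func(b)$ forces $\func(b)=0$, while definiteness gives $\func^{-1}(0)\subset\Bman$. For (3b) I would fix an orthogonal structure on $\vbp$ and use that $\func$ is bounded below by a positive constant on the compact space $\Sman$; then along each fibre ray $s\mapsto s\pu$ with $\pu\in\Sman$ the function $s\mapsto s^{k}\func(\pu)$ increases strictly from $0$ to $+\infty$, so the ray meets $\func^{-1}(t)$ in exactly one point. It follows that $\py\mapsto\py/\|\py\|$ is a bijection $\func^{-1}(t)\to\Sman$ commuting with $\vbp$, with continuous inverse $\pu\mapsto(t/\func(\pu))^{1/k}\pu$, hence a fibrewise homeomorphism. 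Since $\func^{-1}(t)$ is then closed in $\Eman$ and homeomorphic to a sphere bundle over $\Bman$, its path components are closed; the count of path components is the standard fact that a rank-$n$ sphere bundle over a connected base is connected when $n\ge2$, whereas a rank-$1$ sphere bundle, being a double cover, is connected exactly when the line bundle is non-trivial. Together with (3a) this also shows that every leaf of $\Foliation$ is closed. Finally, for (3c), every point of $\Bman=\func^{-1}(0)$ is a minimum of $\func\ge0$, hence critical; and for $\px\notin\Bman$ differentiating $s\mapsto\func(s\px)=s^{k}\func(\px)$ at $s=1$ gives the directional derivative $d\func_{\px}(\px)=k\func(\px)>0$, so $\px$ is regular. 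Thus the critical set of $\func$ coincides with $\Bman$.

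I do not expect any serious obstacle; the argument is a sequence of short verifications. The two spots that need genuine care are the passage to the limit for $\tfib{\dif}$ in (2b), where closedness of the leaf is essential (without it the limit could escape the leaf), and the verification in (3b) that the radial reparametrisation is continuous — so that the fibrewise bijection $\func^{-1}(t)\to\Sman$ is actually a homeomorphism — together with the elementary count of path components of sphere bundles and double covers of the connected base $\Bman$.
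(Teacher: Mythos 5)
Your proposal is correct and follows essentially the same route as the paper: statement (1) via the scaling action of $\delta_t$ on level sets, (2a) by direct computation and continuity, (2b) by combining the leaf-invariance under $\delta_t$ with closedness of the leaf for the limit, (3b) via a fibrewise radial reparametrisation of $\func^{-1}(t)$ onto $\Sman$ followed by the standard path-component count for sphere bundles, and (3c) by differentiating $s\mapsto\func(s\px)$ along a ray. The only cosmetic differences are that you argue (1) abstractly (a homeomorphism permutes connected components of a level set) where the paper writes out an explicit path, and in (3b) you use $\py\mapsto\py/\|\py\|$ directly rather than the paper's intermediate retraction $r_{\func,a}$; both are equivalent, and your invocation of compactness of $\Sman$ in (3b) is harmless but unnecessary, since continuity of $\pu\mapsto(t/\func(\pu))^{1/k}\pu$ needs only the pointwise positivity of $\func$ off $\Bman$.
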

\begin{proof}
\ref{enum:hf:delta_t_foliated}
Suppose points $\px,\py\in\Eman$ belong to the same leaf $\omega$ of $\Foliation$ being by definition a path component of a level set $\func^{-1}(c)$ for some $c\in\bR$.
We need to show that $\delta_t(\px)=t\px$ and $\delta_t(\py)=t\py$ belong to the same path component of some level set of $\func$.
Let $\gamma\colon[0;1]\to\omega$ be a path such that $\gamma(0)=\px$ and $\gamma(1)=\py$.
Then $\func\circ\gamma(s)=c$ for all $c\in[0;1]$.
Hence $\func\circ\delta_t\circ\gamma(s) = \func(t\gamma(s))=t^k\func(\gamma(s))=t^kc$.
In other words, $\delta_t\circ\gamma\colon[0;1]\to\Eman$ is a path in the level set $\func^{-1}(t^kc)$ between $t\px$ and $t\py$.
Therefore, those points belong to the same path component of the level set $\func^{-1}(t^kc)$, i.e.\ to the same leaf of $\Foliation$.

\ref{enum:hf:h:flp}
If $\func\circ\dif=\func$, then
\[
    \func\circ\dif_t(\px) =
    \func\bigl(\tfrac{1}{t} \dif(t\px) \bigr) =
    \tfrac{1}{t^k} \func\bigl(\dif(t\px)\bigr) =
    \tfrac{1}{t^k} \func(t\px) =
    \tfrac{t^k}{t^k} \func(\px) =
    \func(\px),
\]
and therefore $\func\circ\tfib{\dif}=\func$ as well.

\ref{enum:hf:h:fol}
Suppose $\dif$ is \FLP, and $\omega$ be any leaf of $\Foliation$.
Then by~\ref{enum:hf:delta_t_foliated}, $\delta_t(\omega) \in \Foliation$ for each $t>0$.
Hence $\dif(\delta_t(\omega))=\delta_t(\omega)$, and therefore $\dif_t(\omega) = \delta^{-1}_t(\dif(\delta_t(\omega)))=\delta^{-1}_t(\delta_t(\omega))=\omega$.

Finally, suppose $\omega$ is a closed leaf of $\Foliation$.
Let $H\colon[0;1]\times\Eman\to\Eman$ be the homotopy defined by~\eqref{equ:lin_homotopy}.
Then $H\bigl((0;1]\times\omega\bigr) \subset \omega$, whence, by continuity of $H$,
\[
    H\bigl([0;1]\times\omega\bigr) =
    H\bigl(\overline{(0;1]\times\omega}\bigr) \subset
    \overline{H\bigl((0;1]\times\omega\bigr)} \subset
    \overline{\omega} = \omega.
\]
Hence $\tfib{\dif}(\omega) = H_0(\omega)\subset\omega$, and thus $\tfib{\dif}$ is \FLP.

\ref{enum:hf:definite}
Suppose now that $\func$ is definite.

\ref{enum:defhom:level_0}
By assumption $\func>0$ on $\Eman\setminus\Bman$.
On the other hand, if $\px\in\Bman$, then $t\px=\px$ for all $t>0$.
Hence $t^{k}\func(\px)=\func(t\px)=\func(\px)$, which is possible only when $\func(\px)=0$.

\ref{enum:defhom:level_t}
Notice that for every $a\in(0;+\infty)$ there exists a retraction
\[
    r_{\func,a}\colon\Eman\setminus\Bman \to \func^{-1}(a),
    \qquad
    r_{\func,a}(\px) = (a/\func(\px))^{1/k}\px,
\]
satisfying $\vbp\circ r_{\func,a} = \vbp$ and a homeomorphism
\[
    \phi\colon\Eman\setminus\Bman \to \func^{-1}(a)\times(0;+\infty),
    \qquad
    \phi(\px)=\bigl(r_{\func,a}(\px), \func(\px)\bigr).
\]
Hence if $\gfunc\colon\Eman\setminus\Bman$ is another definite homogeneous function, and $b>0$, then the composition $\zeta:\restr{(r_{\gfunc,b}\circ r_{\func,a})}{\func^{-1}(a)}\colon \func^{-1}(a)\to\gfunc^{-1}(b)$ is a homeomorphism also satisfying $\vbp\circ\zeta=\vbp$.
In particular, if $\gfunc$ is the norm of some orthogonal structure on $\vbp$, and $b=1$, then $\gfunc^{-1}(1) = \Sman$ is the total space of the spherical bundle.

The latter statement about path components of $\Sman$ is a well-known property of sphere bundles.
We will briefly recall its proof.
Note that $\vbp:\Sman\to\Bman$ is a locally trivial $S^{n-1}$-bundle, and we get the corresponding exact sequence of homotopy groups and sets
\[
    \pi_1 S^{n-1} \xrightarrow{~i_1~}
    \pi_1 \Sman   \xrightarrow{~\vbp_1~}
    \pi_1 \Bman   \xrightarrow{~\partial_1~}
    \pi_0 S^{n-1} \xrightarrow{~i_0~}
    \pi_0 \Sman   \xrightarrow{~\vbp_0~}
    \pi_0\Bman,
\]
where the latter term $\pi_0\Bman$ is zero, since $\Bman$ is assumed to be path connected.
Hence if $n>1$, $\pi_0 S^{n-1}=0$, whence $\pi_0 \Sman=0$ as well, i.e.\ $\Sman$ is path connected.

Suppose $n=1$.
Then $\restr{\vbp}{\Sman}:\Sman\to\Bman$ is an $S^0$-bundle, i.e.\ a double cover of $\Bman$, and the above exact sequence reduces to the following one:
\[
    0             \to
    \pi_1 \Sman   \xrightarrow{~\vbp_1~}
    \pi_1 \Bman   \xrightarrow{~\partial_1~}
    \bZ_2         \xrightarrow{~i_0~}
    \pi_0 \Sman   \xrightarrow{~\vbp_0~}
    0,
\]
Evidently, if $\vbp$ is trivial, then $\restr{\vbp}{\Sman}$ is trivial as well.
Conversely, if $\restr{\vbp}{\Sman}$ is trivial, then $\Sman$ is a disjoint union of two copies $\Bman'$ and $\Bman''$ of $\Bman$.
Hence the inverse map $(\restr{\vbp}{\Bman'})^{-1}:\Bman\to\Bman'$ is a section of $\vbp$, whence $\vbp$ is trivial as a one-dimensional vector bundle admitting a section.

Now, if $\restr{\vbp}{\Sman}$ (and therefore $\vbp$) is non-trivial, then $\vbp_1$ is not surjective, whence the image of $\partial_1$ consists of more than one element, and therefore $\partial_1$ is surjective.
Hence, $i_0$ is a constant map, and thus $\vbp_0$ is a bijection, i.e.\ $\pi_0 \Sman=0$.

\ref{enum:defhom:C1}
Since $\func$ takes a minimum on all of $\Bman$, we see that $\Bman$ consists of critical points of $\func$.
On the other hand, let $\px\in\Eman\setminus\Bman$ and $\gamma\colon(-1;1)\to\Eman$ be given by $\gamma(t)=t\px$.
Then $\gamma(1)=\px$ and
\[
\restr{\tfrac{d}{dt}\func(\gamma(t))}{t=1} =
\restr{\tfrac{d}{dt}\func(t\px)}{t=1} =
\restr{\tfrac{d}{dt}(t^k\func(\px))}{t=1} =
\restr{\func(\px) \cdot (k t^{k-1})}{t=1} = k\func(\px) > 0.
\]
Hence $\px$ is a regular point of $\func$.
\end{proof}

\subsection{Shifts along orbits of flows}\label{sect:smooth_shifts_along_orbits}
Let $\Mman$ be a smooth manifold and $\Flow\colon\Mman\times\bR\to\Mman$ be a smooth flow generated by some vector field $\Fld$.
Then for an open subset $\Uman\subset\Mman$ and a smooth function $\alpha\colon\Uman\to\bR$ one can define the following smooth map
\[
    \Flow_{\alpha}\colon\Uman\to\Mman,
    \qquad
    \Flow_{\alpha}(\px) = \Flow(\px,\alpha(\px)), \ \px\in\Uman,
\]
We will call $\Flow_{\alpha}$ the \myemph{shift} along orbits of $\Flow$ via the function $\alpha$, and $\alpha$ will be called in turn a \myemph{shift function} for $\Flow_{\alpha}$.

Let $\Foliation$ be the partition of $\Mman$ into the orbits of $\Flow$.
Then the map $\Flow_{\alpha}$ is $\Foliation$-leaf preserving.
Moreover, the following family of map $\Flow_{t\alpha}(\px) = \Flow(\px,\alpha(\px))$, $t\in[0;1]$, is a homotopy of $\Flow_{\alpha}$ to the identity inclusion $\Uman\subset\Mman$ in the set of $\restr{\Foliation}{\Uman}$-leaf preserving\ maps $\Uman\to\Mman$.

The following lemma will be used several times.
Let $\ff{\alpha}{\Fld}\colon\Uman\to\Mman$ be derivative of $\alpha$ along $\Fld$.
\begin{sublemma}[{\cite[Eq.~(13)]{Maksymenko:TA:2003}}]\label{lm:shift:local_diff}
Let $\px\in\Uman$.
Then the map $\Flow_{\alpha}\colon\Uman\to\Mman$ is a local diffeomorphism at $\px$ iff $\ff{\alpha}{\Fld}(\px) \not=-1$.
In this case, for every non-constant orbit $\omega$ of $\Flow$, the restriction of $\Flow_{\alpha}\colon \omega\cap\Uman \to \omega$ is locally injective.
Moreover, $\Flow_{\alpha}$ preserves directions of orbits of $\Flow$ iff $\ff{\alpha}{\Fld}(\px) >-1$.
\qed
\end{sublemma}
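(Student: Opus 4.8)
The plan is to compute the differential of $\Flow_{\alpha}$ at a point $\px\in\Uman$ and read off when it is invertible. First I would split the tangent space $\tang[\px]{\Mman}$ into the line $\bR\Fld(\px)$ spanned by the generating vector field (assuming $\Fld(\px)\neq0$; the case $\Fld(\px)=0$ is trivial since then $\Flow_{\alpha}$ fixes $\px$ and has differential the identity there) and a complementary hyperplane $W$. By the chain rule applied to $\Flow_{\alpha}(\px)=\Flow(\px,\alpha(\px))$, for a tangent vector $v\in\tang[\px]{\Mman}$ one gets
\[
    d\Flow_{\alpha}(v) = d_{\px}\Flow_{\alpha(\px)}(v) + d\alpha(v)\cdot\Fld\bigl(\Flow_{\alpha}(\px)\bigr),
\]
where $\Flow_{s}\colon\Mman\to\Mman$ denotes the time-$s$ map of the flow. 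Using that $d_{\px}\Flow_{s}$ maps $\Fld(\px)$ to $\Fld(\Flow_{s}(\px))$ and is an isomorphism, one sees that on the complementary hyperplane $d\Flow_{\alpha}$ differs from the isomorphism $d_{\px}\Flow_{\alpha(\px)}$ only by a rank-one correction landing in the line $\bR\Fld(\Flow_{\alpha}(\px))$. Hence $\Flow_{\alpha}$ is a local diffeomorphism at $\px$ precisely when the induced map on this line is nonzero, which after evaluating $d\alpha$ on $\Fld(\px)$ yields the scalar $1+\ff{\alpha}{\Fld}(\px)$. This gives the criterion $\ff{\alpha}{\Fld}(\px)\neq-1$. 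I expect the only real bookkeeping obstacle to be keeping track of the identification of $\tang[\px]{\Mman}$ with $\tang[{\Flow_{\alpha}(\px)}]{\Mman}$ via $d_{\px}\Flow_{\alpha(\px)}$ correctly, so that the rank-one perturbation argument is clean; this is routine but needs care with base points.

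For the statement about the restriction to an orbit: if $\omega$ is a non-constant orbit through $\px$, then $\restr{\Flow_{\alpha}}{\omega\cap\Uman}$ maps $\omega$ to $\omega$, and in a local time coordinate $s$ on $\omega$ (so $\Flow(\px,s)$ parametrizes $\omega$ near $\px$, which is a local diffeomorphism since $\omega$ is non-constant) the map $\Flow_{\alpha}$ reads $s\mapsto s+\alpha(\Flow(\px,s))$, whose derivative in $s$ is exactly $1+\ff{\alpha}{\Fld}$. Thus non-vanishing of $1+\ff{\alpha}{\Fld}(\px)$ is equivalent to this one-variable map being a local diffeomorphism at $\px$, hence locally injective along $\omega$.

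Finally, for the claim on preserving directions of orbits: the sign of the $s$-derivative $1+\ff{\alpha}{\Fld}(\px)$ determines whether the restriction $\restr{\Flow_{\alpha}}{\omega\cap\Uman}$ is locally orientation-preserving or orientation-reversing on $\omega$ with respect to the flow direction; it preserves the direction of $\Fld$ iff $1+\ff{\alpha}{\Fld}(\px)>0$, i.e.\ $\ff{\alpha}{\Fld}(\px)>-1$. I would phrase this via the positivity of the derivative of $s\mapsto s+\alpha(\Flow(\px,s))$, which is immediate from the computation in the previous paragraph. Since the statement is quoted from~\cite[Eq.~(13)]{Maksymenko:TA:2003}, I would keep the argument brief and refer there for additional detail; the main point to get right is the differential computation in the first paragraph.
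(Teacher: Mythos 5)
The paper does not give a proof of this lemma; it is cited verbatim from~\cite[Eq.~(13)]{Maksymenko:TA:2003} and its statement ends with an immediate \qed, so there is no internal argument to compare against. Your proof fills that gap correctly, and the approach --- computing the differential of $\Flow_{\alpha}$ by the chain rule and isolating the scalar $1+\ff{\alpha}{\Fld}(\px)$ --- is the natural one.

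Two minor points worth tightening. First, when $\Fld(\px)=0$ the differential of $\Flow_{\alpha}$ at $\px$ is $d_\px\Flow_{\alpha(\px)}=\exp\bigl(\alpha(\px)\,D\Fld(\px)\bigr)$, not the identity; it is still an isomorphism, and since $\ff{\alpha}{\Fld}(\px)=0\neq-1$ the equivalence is untouched, so the (small) misstatement is harmless. Second, the rank-one perturbation step is cleanest if you first compose with $(d_\px\Flow_{\alpha(\px)})^{-1}$, reducing invertibility of $d_\px\Flow_{\alpha}$ to invertibility of the endomorphism $v\mapsto v+d\alpha(v)\,\Fld(\px)$ of the tangent space at $\px$, and then apply the identity $\det(\id+u\otimes\lambda)=1+\lambda(u)$ for a vector $u$ and covector $\lambda$; this gives $1+\ff{\alpha}{\Fld}(\px)$ directly and avoids having to worry about whether the complement to $\bR\Fld(\px)$ that you choose coincides with $\ker d\alpha$ (when it does not, the block form is lower triangular rather than diagonal, which is still fine for the determinant but is not quite what ``rank-one correction on the line'' suggests). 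The restriction-to-orbit calculation, $s\mapsto s+\alpha(\Flow(\px,s))$ with $s$-derivative $1+\ff{\alpha}{\Fld}$, and the resulting sign criterion for preserving orbit directions, are correct as written.
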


\section{Linearization theorem}\label{sect:linearization_theorem}
Let $\Mman$ be a manifold, $\Bman$ its proper submanifold, and $\vbp\colon\Eman\to\Bman$ a regular neighborhood of $\Bman$, i.e.\ a retraction of an open neighborhood $\Eman$ of $\Bman$ endowed with a structure of a vector bundle.
In~\cite{KhokhliukMaksymenko:2022} the authors proved that for compact $\Bman$ the inclusion of pairs
\[
    \bigl(\DiffInvLMB,\DiffFixLMB\bigr)\subset\bigl(\DiffInvMB,\DiffFixMB\bigr)
\]
is a homotopy equivalence with respect to weak and strong Whitney topologies $\Wr{\infty}$ and $\Sr{\infty}$.
More precisely, it is shown that for any neighborhood $\Vman$ of $\Bman$ in $\Eman$ there exists a homotopy $H:\DiffInvMB\times[0;1]\to\DiffInvMB$ and a continuous function $\phi:\DiffInvMB \times [0;1] \times \Mman \to[0;1]$ such that
\begin{align*}
&H_0=\id_{\DiffInvMB}, &
&H_1(\DiffInvMB)\subset \DiffInvLMB, \\
&H(\DiffInvLMB\times[0;1])\subset \DiffInvLMB, &
&H(\DiffFixMB\times[0;1])\subset \DiffFixMB,
\end{align*}
\begin{itemize}
\item for each $\dif\in\DiffInvMB$ the map $\Mman\times[0;1]\to\Mman$, $(t,\px)\mapsto\phi(\dif,t,\px)$, is $\Cinfty$;
\item $\phi(\dif,t,\px)>0$ for $t>0$ and
\begin{equation}\label{equ:H_homotopy}
    H(\dif,t)(\px) = \tfrac{\dif(\phi(\dif,t,\px)\px)}{\phi(\dif,t,\px)}.
\end{equation}
\end{itemize}
In particular, $H(\dif,t)(\px) = \dif(\px)$ for $\px\in\Mman\setminus\Vman$ and $t\in[0;1]$.
Moreover, if $\Mman$ is endowed with a partition $\Foliation$ having certain ``homogeneity'' properties and $\dif$ preserves elements of $\Foliation$, then so does each $H(\dif,t)$.

The principal technical problem in~\cite{KhokhliukMaksymenko:2022} was to choose the function $\phi$ so that the map given by~\eqref{equ:H_homotopy} will be a diffeomorphism for all $(\dif,t)\in\DiffInvMB\times(0;1]$.

In this section we consider a particular case of the above result for a special class of foliations, see Theorem~\ref{th:linearization_simpler} below.
Its proof is essentially easier, and we will use it for the proof of Theorem~\ref{th:DFdT_full_variant}.

\subsection{Main result}\label{sect:linearization_settings}
As above let $\Mman$ be a manifold, $\Bman$ its proper submanifold, $\vbp\colon\Eman\to\Bman$ a regular neighborhood of $\Bman$, $\func\colon \Eman\to[0;+\infty)$ a $\Cinfty$ \myemph{definite} homogeneous function, and $\regNbh{t}:=\func^{-1}\bigl([0;t]\bigr)$ for each $t\geq0$.
Let also $\Foliation$ be any partition of $\Mman$ for which \myemph{$\Roo$ is saturated and $\restr{\Foliation}{\Roo}$ consists of path components of level sets $\func^{-1}(t)$, $t\in[0;\oConst]$}, as in Lemma~\ref{lm:homo_func}.

\begin{subremark}\rm
Thus we do not require any special properties of leaves of $\Foliation$ in $\Mman\setminus\Roo$.
For instance, $\Mman\setminus\Roo$ may be a whole element of $\Foliation$ or be consisted of distinct points of $\Mman\setminus\Roo$.
\end{subremark}

Let also
\begin{itemize}
\item
$\mu\colon[0;+\infty)\to[0;1]$ be a $\Cinfty$ function such that $\mu([0;\aConst])=0$ and $\mu([\bConst;+\infty))=1$;
\item
$\phi\colon[0;1]\times\Eman\to[0;1]$ be the $\Cinfty$ function given by $\phi(t,\px) = t + (1-t)\mu(\func(\px))$;
\item
$\Ghom\colon\DiffInvMB\times(0;1]\to\Cinfty(\Mman,\Mman)$ be the map defined by
\begin{equation}\label{equ:G_homotopy}
    \Ghom(\dif,t)(\px) =
    \begin{cases}
        \frac{\dif(\phi(t,\px)\px)}{\phi(t,\px)}, & (\dif,t,\px)\in\DiffInvMB\times[0;1]\times\Roo, \\
        \dif(\px), & (\dif,t,\px)\in\DiffInvMB\times[0;1]\times(\Mman\setminus\Rb).
    \end{cases}
\end{equation}
As $\mu=1$ on $[\bConst;+\infty)$, we have that $\phi(t,\px)=1$ on $\func^{-1}\bigl([\bConst;+\infty)\bigr)$, and thus $\Ghom(\dif,t)=\dif$ on $\Roo \cap (\Mman\setminus\Rb)$.
Hence $\Ghom(\dif,t)\colon\Mman\to\Mman$ is indeed a well-defined $\Cinfty$ map.
\end{itemize}
The following theorem is a particular case of results of~\cite{KhokhliukMaksymenko:2022}, however, due to the assumption that $\func$ is definite and a special choice of $\phi$, the proof now is essentially simpler.

\begin{subtheorem}[{\rm c.f.~\cite{KhokhliukMaksymenko:2022}}]
\label{th:linearization_simpler}
The map $\Ghom$ has the following properties.
\begin{enumerate}[leftmargin=*]
\item\label{enum:linsmp:G_general}
For every $t\in(0;1]$ and $\dif\in\DiffInvMB$ we have that:
\begin{enumerate}[label={\rm(\alph*)}, ref={\rm(1\alph*)}]
\item\label{enum:linsmp:G1}
$\Ghom(\dif,1)=\dif$;
\item\label{enum:linsmp:Gt_h_outB}
$\Ghom(\dif,t)(\px) = \dif(\px)$ for each $\px\in(\overline{\Mman\setminus\Rb})\cup\Bman$ and $t\in(0;1]$;
\item\label{enum:linsmp:Gt_hlin_nearB}
if $\dif = \tfib{\dif}$ on some \myemph{star-like} neighborhood $\Uman$ of $\Bman$, so $\dif\in\DiffInvLUMB$, then $\Ghom(\dif,t) = \dif = \tfib{\dif}$ on $\Uman$ as well.
\end{enumerate}

\item\label{enum:linsmp:G_prop}
For every $t\in(0;1]$ and $\dif\in\DiffFol$
\begin{enumerate}[label={\rm(\alph*)}, ref={\rm(2\alph*)}]

\item\label{enum:linsmp:Gt_flp}
the map $\Ghom(\dif,t)$ is \FLP, and, in particular, $\func \circ \Ghom(\dif,t) = \func$.

\item\label{enum:linsmp:Gt_homomorphism}
the map $\Ghom_t:\DiffFol\to\Ci{\Mman}{\Mman}$, $\Ghom_t(\dif):=\Ghom(\dif,t)$, is a \myemph{homomorphism of monoids} with respect to the natural composition of maps, i.e.\ $\Ghom_t(\gdif\circ\dif) = \Ghom_t(\gdif)\circ \Ghom_t(\dif)$ for all $\dif,\gdif\in\DiffFol$ and $\Ghom_t(\id_{\Mman})=\id_{\Mman}$.

\item\label{enum:linsmp:Ght_homeo}
$\Ghom_t(\DiffFol)\subset\DiffFol$.
\end{enumerate}

\item\label{enum:linsmp:Gext_prop}
$\restr{\Ghom}{\DiffFol\times(0;1]}$ extends to a $\Wr{\infty,\infty}$-continuous map $\Ghom:\DiffFol\times[0;1]\to\DiffFol$ such that all statements in~\ref{enum:linsmp:G_general} and~\ref{enum:linsmp:G_prop} hold for $t=0$ as well.
Moreover, $\Ghom(\dif,0)(\px) = \tfib{\dif}(\px)$ for each $\dif\in\DiffFol$ and $\px\in\Ra$, so, in particular, $\Ghom_0(\DiffFol)\subset\DiffInv[\vbp,\Ra]{\Foliation}{\Bman} \subset \DiffLFol$.

\item\label{enum:linsmp:Gext_glogal}
Let $\Qman\subset(\overline{\Mman\setminus\Rb})\cup\Bman$ be any subset, $\eta:[0;1]\to\Mman$ is a path with $\eta(0), \eta(1)\subset\Qman$.
Then each of the following groups $\DiffLFol$, $\DiffInv[\vbp,\Ra]{\Foliation}{\Bman}$, $\DiffFixFolB$, and $\DiffHFix{\Foliation}{\Qman,\apath}$ is invariant under $\Ghom$.
Hence $\Ghom$ is a deformation of the triple
\begin{equation}\label{equ:big_triple}
\bigl(
    \DiffFol, \
    \DiffFixFolB, \
    \DiffHFix{\Foliation}{\Qman,\apath}
\bigr)
\end{equation}
into the triple
\begin{equation}\label{equ:small_triple_U}
    \bigl(
        \DiffInv[\vbp,\Ra]{\Foliation}{\Bman}, \
        \DiffFix[\vbp,\Ra]{\Foliation}{\Bman}, \
        \DiffHFix[\vbp,\Ra]{\Foliation}{\Qman,\apath}
    \bigr)
\end{equation}
and also into the intermediate triple
\begin{equation}\label{equ:small_triple}
    \bigl(
        \DiffLFol, \
        \DiffFixLFolB, \
        \DiffHFix[\vbp]{\Foliation}{\Qman,\apath}
    \bigr)
\end{equation}
obtained by intersecting~\eqref{equ:big_triple} with $\DiffInv[\vbp,\Ra]{\Foliation}{\Bman}$ and $\DiffInv[\vbp]{\Foliation}{\Bman}$ respectively.
\end{enumerate}
\end{subtheorem}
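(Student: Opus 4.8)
The plan is to analyze the map $\Ghom$ given by~\eqref{equ:G_homotopy} directly, verifying the listed properties in the order~\ref{enum:linsmp:G_general}, \ref{enum:linsmp:G_prop}, \ref{enum:linsmp:Gext_prop}, \ref{enum:linsmp:Gext_glogal}, and exploiting throughout that $\func$ is \emph{definite} homogeneous. The key structural observation is that, on $\Roo$, the formula $\Ghom(\dif,t)(\px) = \dif(\phi(t,\px)\px)/\phi(t,\px)$ is exactly a ``rescaled'' version of $\dif$: it precomposes with the homothety $\delta_{\phi(t,\px)}$ (with a point-dependent parameter) and then postcomposes with $\delta_{1/\phi(t,\px)}$, so it differs from $\dif$ only by conjugation-like rescaling that is trivial where $\phi = 1$.

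First I would record the elementary facts about $\phi$: it is $\Cinfty$, equals $1$ on $\func^{-1}([\bConst;+\infty))$ (since $\mu=1$ there), equals $t$ on $\func^{-1}([0;\aConst])=\Ra$ (since $\mu=0$ there), and is always $\geq t>0$ for $t\in(0;1]$; in particular $\phi(1,\px)\equiv 1$, giving~\ref{enum:linsmp:G1}. For~\ref{enum:linsmp:Gt_h_outB}: on $\overline{\Mman\setminus\Rb}$ we have $\func\geq\bConst$ so $\phi=1$ and $\Ghom(\dif,t)=\dif$; on $\Bman=\func^{-1}(0)$ (using Lemma~\ref{lm:homo_func}\ref{enum:defhom:level_0}) every point is fixed by all homotheties, so $\Ghom(\dif,t)(\px)=\dif(\phi\px)/\phi = \dif(\px)$. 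For~\ref{enum:linsmp:Gt_hlin_nearB}: if $\dif=\tfib{\dif}$ on a star-like neighborhood $\Uman$ of $\Bman$, then $\dif$ commutes with homotheties on $\Uman$ (being fiberwise linear), hence $\dif(\phi\px)/\phi = \phi\,\dif(\px)/\phi=\dif(\px)$ there, and this common value equals $\tfib{\dif}(\px)$. That $\Ghom(\dif,t)$ is a diffeomorphism for $t\in(0;1]$ is where the definiteness pays off compared to~\cite{KhokhliukMaksymenko:2022}: since $\phi(t,\cdot)$ is $\Cinfty$ and bounded below by $t$, the map $\px\mapsto\phi(t,\px)\px$ on $\Roo$ restricted to each level set $\func^{-1}(c)$ is, via the retraction $r_{\func,c}$ of Lemma~\ref{lm:homo_func}\ref{enum:defhom:level_t}, conjugate to a smooth radial reparametrization; I would check it is a diffeomorphism of $\Roo$ onto a neighborhood of $\Bman$ and splices smoothly with $\id$ where $\phi=1$, and that its inverse is again of this form, so $\Ghom(\dif,t)=\delta_{1/\phi(t,\cdot)}\circ\dif\circ\delta_{\phi(t,\cdot)}$ is a composition of diffeomorphisms.

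Next, part~\ref{enum:linsmp:G_prop}: if $\dif$ is \FLP\ then by Lemma~\ref{lm:homo_func}\ref{enum:hf:delta_t_foliated} each homothety $\delta_s$ is $\Foliation$-foliated, and on the saturated set $\Roo$ it actually preserves each leaf of $\restr{\Foliation}{\Roo}$ that it maps into $\Roo$; combining with $\dif(\omega)=\omega$ one gets $\Ghom(\dif,t)(\omega)\subset\omega$ leafwise, and the scalar identity $\func\circ\Ghom(\dif,t)=\func$ follows from $\func(\dif(\phi\px)/\phi)=\phi^{-k}\func(\dif(\phi\px))=\phi^{-k}\func(\phi\px)=\func(\px)$. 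The monoid-homomorphism property~\ref{enum:linsmp:Gt_homomorphism} is a direct computation: writing $\psi=\phi(t,\cdot)$, for $\dif,\gdif$ \FLP\ one uses that $\func\circ\dif=\func$ forces $\psi(\dif(\psi(\px)\px)/\psi(\px)) = \psi(\px)$ (since $\phi$ depends on $\px$ only through $\func(\px)$, which is preserved), so $\Ghom_t(\gdif)\circ\Ghom_t(\dif)(\px) = \gdif\bigl(\psi\cdot(\dif(\psi\px)/\psi)\bigr)/\psi = \gdif(\dif(\psi\px))/\psi = \Ghom_t(\gdif\circ\dif)(\px)$; evaluating on $\id_\Mman$ gives the identity. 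Since $\Ghom_t(\dif)$ is a diffeomorphism (part 1) and an \FLP\ map, $\Ghom_t(\DiffFol)\subset\DiffFol$, giving~\ref{enum:linsmp:Ght_homeo}. I expect the \textbf{main obstacle} to be the continuity and smoothness claims at $t=0$ in part~\ref{enum:linsmp:Gext_prop}: one must show that as $t\to 0^+$ the maps $\Ghom(\dif,t)$ converge in $\Wr{\infty,\infty}$ to a smooth limit and that this limit equals $\tfib{\dif}$ on $\Ra$. On $\Ra$ we have $\phi(t,\px)=t$ identically, so $\Ghom(\dif,t)(\px)=\dif(t\px)/t=\tfib{\dif}_t(\px)$ in the notation of~\eqref{equ:lin_homotopy}, and the homotopy~\eqref{equ:lin_homotopy} is $\Cinfty$ down to $t=0$ with value $\tfib{\dif}$; on $\Rb\setminus\Ra$ one has $\phi(t,\px)=t+(1-t)\mu(\func(\px))\geq (1-t)\mu(\aConst)\cdots$—wait, $\mu$ vanishes on $[0;\aConst]$, so there $\phi$ may still be small; the argument is instead that $\phi(t,\px)$ and all its $\px$-derivatives depend continuously (jointly, $\Cinfty$ in $\px$) on $t\in[0;1]$ including $t=0$, where $\phi(0,\px)=\mu(\func(\px))$, which is $0$ on $\Ra$ and positive outside; so $\Ghom(\dif,0)(\px)=\dif(\mu(\func(\px))\px)/\mu(\func(\px))$ on $\Roo\setminus\Ra$ and $=\tfib{\dif}(\px)$ on $\Ra$, and one checks these patch to a $\Cinfty$ map using that near $\partial\Ra$ the function $\mu\circ\func$ and $\dif$'s Taylor expansion at $\Bman$ make the two formulas agree to all orders (here the compactness of $\Bman$ and the locally trivial, star-like structure of $\Ra$ are used). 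Continuity of $\dif\mapsto\Ghom(\dif,0)$ in $\Wr{\infty,\infty}$ then follows since jet-level formulas for $\Ghom(\dif,t)$ are continuous in $(\dif,t)$ jointly.

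Finally, part~\ref{enum:linsmp:Gext_glogal} is formal once the earlier parts are in hand: invariance of $\DiffFixFolB$ follows from~\ref{enum:linsmp:Gt_h_outB} (which gives $\Ghom(\dif,t)=\dif$ on $\Bman$, hence fixed on $\Bman$ if $\dif$ is), invariance of $\DiffLFol$ and of $\DiffInv[\vbp,\Ra]{\Foliation}{\Bman}$ follows from~\ref{enum:linsmp:Gt_hlin_nearB} together with part~\ref{enum:linsmp:Gext_prop} (the $t=0$ endpoint lands in $\DiffInv[\vbp,\Ra]{\Foliation}{\Bman}$ and the homotopy does not disturb any pre-existing linearity on a star-like neighborhood), and invariance of $\DiffHFix{\Foliation}{\Qman,\apath}$ follows because for $\Qman\subset(\overline{\Mman\setminus\Rb})\cup\Bman$ each $\Ghom(\dif,t)$ agrees with $\dif$ on $\Qman$ (by~\ref{enum:linsmp:Gt_h_outB}), so $\Ghom(\dif,t)\circ\apath$ runs through a continuous family of paths with fixed endpoints joining $\apath$ (at $t=1$) to $\Ghom(\dif,0)\circ\apath$, whence all have the same relative homotopy class; moreover $\Ghom(\dif,t)\circ\apath$ is homotopic rel endpoints to $\dif\circ\apath$ for each fixed $t$ via $s\mapsto\Ghom(\dif,s)\circ\apath$, so the condition defining $\DiffHFix{\Foliation}{\Qman,\apath}$ is preserved. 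The three triples~\eqref{equ:big_triple}, \eqref{equ:small_triple_U}, \eqref{equ:small_triple} are then related by the observations of Section~\ref{sect:deformations} on deformations of pairs (and triples), with $\Ghom$ the deformation and $\Ghom_0$ its homotopy inverse onto the target triple.
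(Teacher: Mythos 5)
Your overall route matches the paper's: verify the explicit identities for $\Ghom$ directly, establish the monoid-homomorphism property, and use the invariance of the various subgroups. Parts \ref{enum:linsmp:G_general}, \ref{enum:linsmp:Gt_flp}, \ref{enum:linsmp:Gt_homomorphism}, \ref{enum:linsmp:Gext_prop} and \ref{enum:linsmp:Gext_glogal} are handled in essentially the same way as the paper, and your sketch of the $t=0$ patching (Hadamard-type argument near $\partial\Ra$) is consistent with what the paper does by reference to~\cite{KhokhliukMaksymenko:2022}.

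There is, however, one genuine gap: your argument that $\Ghom_t(\dif)$ is a diffeomorphism. You propose to write $\Ghom(\dif,t)=\delta_{1/\phi(t,\cdot)}\circ\dif\circ\delta_{\phi(t,\cdot)}$ and conclude it is a composition of diffeomorphisms. This decomposition is not a well-defined composition of maps: in $\Ghom(\dif,t)(\px) = \dif\bigl(\phi(t,\px)\px\bigr)/\phi(t,\px)$ the \emph{same} scalar $\phi(t,\px)$, computed at the source point $\px$, appears both inside and outside $\dif$, whereas a genuine conjugation $\Psi^{-1}\circ\dif\circ\Psi$ with $\Psi(\px)=\phi(t,\px)\px$ would put $\phi$ evaluated at the \emph{image} point in the outer factor. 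The two agree only when $\phi(t,\Ghom_t(\dif)(\px))=\phi(t,\px)$, i.e.\ when $\dif$ preserves $\func$ (this is exactly equation~\eqref{equ:phi_t_Ght}), so the identity fails for general $\dif\in\DiffInvMB$. Even restricted to $\dif\in\DiffFol$, this route additionally requires $\Psi$ itself to be a diffeomorphism, which is a nontrivial monotonicity statement about $s\mapsto s\bigl(t+(1-t)\mu(s^{k}c)\bigr)$; the paper never assumes $\mu$ to be monotone, so injectivity of $\Psi$ is not automatic. The paper's argument for~\ref{enum:linsmp:Ght_homeo} avoids all of this: having proved in~\ref{enum:linsmp:Gt_homomorphism} that $\Ghom_t$ is a monoid homomorphism with $\Ghom_t(\id_{\Mman})=\id_{\Mman}$, one obtains
\[
    \id_{\Mman}=\Ghom_t(\dif\circ\dif^{-1})=\Ghom_t(\dif)\circ\Ghom_t(\dif^{-1}),
\]
so $\Ghom_t(\dif)$ and $\Ghom_t(\dif^{-1})$ are smooth two-sided inverses of one another and hence diffeomorphisms. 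You already proved~\ref{enum:linsmp:Gt_homomorphism}; replace the conjugation argument with this algebraic inverse argument and the proof closes.

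One small slip in part~\ref{enum:linsmp:Gext_glogal}: the family $t\mapsto\Ghom(\dif,t)\circ\apath$ joins $\dif\circ\apath$ (at $t=1$, since $\Ghom(\dif,1)=\dif$) to $\Ghom(\dif,0)\circ\apath$, not $\apath$ to $\Ghom(\dif,0)\circ\apath$ as you first write; your second sentence has the correct conclusion, so this is only a phrasing error.
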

\begin{proof}
\ref{enum:linsmp:G_general}
Let $t\in(0;1]$ and $\dif\in\DiffInvMB$.
Then statement~\ref{enum:linsmp:G1} is evident.

\ref{enum:linsmp:Gt_h_outB}
By formulas for $\Ghom$, we have that $\Ghom(\dif,t) = \dif$ on $\Mman\setminus\Rb$.
Moreover, let $\px\in\Bman$.
Then by assumption $\dif(\px)\in\Bman$, whence $s\px=\px$ and $s\dif(\px)=\dif(\px)$ for all $s\in\bR$.
In particular, put $s=\phi(t,\px)$.
Then $\Ghom(\dif,t)(\px) = \tfrac{1}{s}\dif(s\px)=\dif(\px)$.

\ref{enum:linsmp:Gt_hlin_nearB}
Suppose $\dif=\tfib{\dif}$ on some star-like neighborhood $\Uman \subset \Roo$ of $\Bman$.
Then for every $\px\in\Uman$ we have that $\phi(t,\px)\px\in\Uman$ as well, and
\[
    \Ghom(\dif,t)(\px) = \frac{\dif(\phi(t,\px)\px)}{\phi(t,\px)}=
    \frac{\tfib{\dif}(\phi(t,\px)\px)}{\phi(t,\px)}=
    \frac{\phi(t,\px)\tfib{\dif}(\px)}{\phi(t,\px)}=\tfib{\dif}(\px).
\]

\ref{enum:linsmp:G_prop}
Assume now that $t\in(0;1]$ and $\dif\in\DiffFol$.

\ref{enum:linsmp:Gt_flp}
Let $\px\in\Mman$ and $\omega$ be the leaf of $\Foliation$ containing $\px$.
If $\px\in\Mman\setminus\Roo$, then $\omega\subset\Mman\setminus\Roo$ since $\Roo$ is saturated.
Therefore, $\Ghom(\dif,t)(\px) = \dif(\px) \in \omega$, since $\dif$ is \FLP.

Suppose $\px\in\Roo$.
Put $s:=\phi(t,\px)$.
Then in the notation of Lemma~\ref{lm:homo_func}\ref{enum:hf:h}, $G(\dif,t) = \dif_s(\px)$.
Since $\dif_s(\omega)\subset\omega$, we see that $\Ghom(\dif,t)(\px)\in \omega$ as well, and thus $\Ghom(\dif,t)$ is an \FLP\ map.

\ref{enum:linsmp:Gt_homomorphism}
Evidently,
\[
    \Ghom_t(\id_{\Mman})(\px) =
    \begin{cases}
        \frac{\phi(t,\px)\px}{\phi(t,\px)}=\px, & \px\in\Roo, \\
        \id_{\Mman}(\px)=\px, & \px\in\Mman\setminus\Rb
    \end{cases}
\]
so $\Ghom_t(\id_{\Mman})=\id_{\Mman}$.
Moreover, notice that
\begin{equation}\label{equ:phi_t_Ght}
    \phi(t, G_t(\dif,t)(\px))
    = t + (1-t) \mu\bigl(\func(G_t(\dif,t)(\px))\bigr)
    \stackrel{\ref{enum:linsmp:Gt_flp}}{=}
    t + (1-t) \mu(\func(\px))
    = \phi(t,\px)
\end{equation}
for all $(\dif,t,\px)\in\DiffFol\times(0;1]\times\Roo$.
Hence if $\px\in\Roo$, then
\begin{align*}
\Ghom_t(\gdif)\circ \Ghom_t(\dif)(\px) &=
\frac{\gdif\bigl(\phi(t,\Ghom_t(\dif))\Ghom_t(\dif)\bigr)}{\phi(t,\Ghom_t(\dif))}
\stackrel{\eqref{equ:phi_t_Ght}}{=}
\frac{\gdif(\phi(t,\px)\Ghom_t(\dif))}{\phi(t,\px)} =
\frac{\gdif\bigl(\phi(t,\px) \frac{\dif(\phi(t,\px)\px)}{\phi(t,\px)} \bigr)}{\phi(t,\px)} = \\
& =
\frac{\gdif\bigl(\dif(\phi(t,\px)\px)\bigr)}{\phi(t,\px)} =
\Ghom_t(\gdif\circ\dif)(\px).
\end{align*}
On the other hand, if $\px\in\Mman\setminus\Rb$, then $\Ghom_t(\gdif)\circ \Ghom_t(\dif)(\px) = \gdif\circ\dif(\px) = \Ghom_t(\gdif\circ\dif)(\px)$ as well.

\ref{enum:linsmp:Ght_homeo}
Since $\Ghom_t$ is a homomorphism of monoids, for each $\dif\in\DiffFol$ we have that
\[
    \id_{\Mman} =
    \Ghom_t(\id_{\Mman}) =
    \Ghom_t(\dif\circ\dif^{-1}) =
    \Ghom_t(\dif)\circ \Ghom_t(\dif^{-1}).
\]
As both $\Ghom_t(\dif^{-1})$ and $\Ghom_t(\dif)$ are $\Cinfty$, they are diffeomorphisms of $\Eman$.
Moreover, due to~\ref{enum:linsmp:Gt_flp}, $\Ghom_t(\dif)\in\DiffFol$.

\ref{enum:linsmp:Gext_prop}
Similarly to~\cite{KhokhliukMaksymenko:2022} one can show that the map $\Ghom$ extends to a $\Wr{\infty,\infty}$-continuous map $\Ghom:\DiffFol\times[0;1]\to\Ci{\Mman}{\Mman}$ such that $\Ghom(\dif,0) = \tfib{\dif}$ on $\func^{-1}\bigl([0,\aConst]\bigr)$ for each $\dif\in\DiffFol$.
The proof is elementary, based on Hadamard lemma, and literally the same as in~\cite{KhokhliukMaksymenko:2022}.

Now properties~\ref{enum:linsmp:G_general} and~\ref{enum:linsmp:G_prop} for $\Ghom_0$ follow from continuity of $\Ghom$, and the proof of~\ref{enum:linsmp:Gt_flp} additionally uses the fact that the leaves of $\Foliation$ are closed.
We leave the details for the reader.

\ref{enum:linsmp:Gext_glogal}
Let $\dif\in\DiffLFol$, then by~\ref{enum:linsmp:Gt_hlin_nearB}, $\Ghom(\dif,t)=\tfib{\dif}$ on some neighborhood of $\Bman$, i.e.\ $\Ghom(\dif,t)\in\DiffLFol$ as well.

Moreover, if $\dif\in\DiffInv[\vbp,\Ra]{\Foliation}{\Bman}$, so $\dif=\tfib{\dif}$ on the star-like neighborhood $\Ra$ of $\Bman$, then  by~\ref{enum:linsmp:Gt_hlin_nearB}, $\Ghom(\dif,t)=\tfib{\dif}$ on $\Ra$ as well.

Suppose $\dif\in\DiffFix{\Foliation}{\Qman}$ and $t\in[0;1]$.
Then by~\ref{enum:linsmp:Gt_h_outB}, $\Ghom(\dif,t)(\px)=\dif(\px)=\px$ for all $\px\in(\overline{\Mman\setminus\Rb})\cup\Bman \supset \Qman$.
Hence $\Ghom(\dif,t)\in\DiffFix{\Foliation}{\Qman}$.

Finally, let $\dif\in\DiffHFix{\Foliation}{\Qman,\apath}$.
By the construction of $\Ghom$, the map $\Ghom_{\dif}\colon [0;1]\times\Roo\to\Roo$, $\Ghom_{\dif}(t,\px) = \Ghom(\dif,t)(\px)$, is continuous.
Hence the map
\[
    \Omega:[0;1]\times[0;1]\to\Roo,
    \qquad
    \Omega(t,s) = \Ghom_{\dif}(t,\apath(s)) = \Ghom(\dif,t)\circ\apath(s),
\]
is a homotopy of paths relative their ends.
Therefore $\apath$, being homotopic relatively ends to $\dif\circ\apath = \Ghom(\dif,1)\circ\apath$, is also homotopic to $\Ghom(\dif,t)\circ\apath$ for all $t\in[0;1]$.
In other words, $\Ghom(\dif,t)\in\DiffHFix{\Foliation}{\Qman,\apath}$ as well.

Thus we see that $\Ghom_1=\id_{\DiffFol}$, $\Ghom_0(\DiffFol)\subset \DiffInv[\vbp,\Ra]{\Foliation}{\Bman} \subset \DiffLFol$, and $\DiffLFol$, $\DiffInv[\vbp,\Ra]{\Foliation}{\Bman}$, $\DiffFix{\Foliation}{\Qman}$, and $\DiffHFix{\Foliation}{\Qman,\apath}$ are invariant under $\Ghom$.
Hence $\Ghom$ is a deformation of~\eqref{equ:big_triple} into either of triples~\eqref{equ:small_triple_U} and~\eqref{equ:small_triple}.
\end{proof}

\subsection{Trivial vector bundles of rank $1$}\label{sect:triv_vb_1}
It this subsection we will prove several statements concerning leaf preserving diffeomorphisms for the foliation of $\Bman\times\bR$ by the sets $\Bman\times\{t\}$ being level sets of the natural projection to $\Bman\times\bR\to\bR$.
\begin{sublemma}\label{lm:collar}
Let $\Bman$ be a boundary component of $\Mman$, and $\psi\colon\Bman\times[0;+\infty) \to \Mman$ be a \myemph{collar} of $\Bman$, i.e.\ an open embedding such that $\psi(\px,0)=\px$ for all $\px\in\Bman$.
Denote $\Cman := \psi\bigl(\Bman\times[0;\aConst]\bigr)$.
Let also $\Foliation$ be any partition of $\Mman$ such that the set $\Kman:=\psi\bigl(\Bman\times[0;1]\bigr)$ is $\Foliation$-saturated and the leaves of $\Foliation$ in $\Kman$ are the sets $\psi\bigl(\Bman\times t\bigr)$, $t\in[0;1]$.
If $\Bman$ is compact, then the inclusion $\DiffFix{\Foliation}{\Cman} \subset \DiffFix{\Foliation}{\Bman}$ is a homotopy equivalence.
\end{sublemma}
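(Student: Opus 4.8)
Here is the plan.

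The plan is to build an explicit deformation of $\DiffFix{\Foliation}{\Bman}$ into $\DiffFix{\Foliation}{\Cman}$ that ``pushes a diffeomorphism off a neighborhood of $\Bman$ along the collar''; this is nothing but the specialization of the map $\Ghom$ of Theorem~\ref{th:linearization_simpler} to the (half of the) trivial line bundle $\psi\colon\Bman\times[0;+\infty)\to\Mman$ equipped with the $\Cinfty$ definite homogeneous function $\func(\px,u):=u$ of degree $1$. First I would use $\psi$ to regard a neighborhood of $\Bman$ in $\Mman$ as $\Bman\times[0;+\infty)$, so that $\Bman=\Bman\times\{0\}$, $\Cman=\Bman\times[0;\aConst]=\func^{-1}([0;\aConst])$, $\Kman=\Bman\times[0;1]$, and the leaves of $\Foliation$ lying in the $\Foliation$-saturated set $\Kman$ are the slices $\Bman\times\{u\}$. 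A key preliminary observation: since $\Kman$ is saturated, any $\dif\in\DiffFix{\Foliation}{\Bman}$ maps $\Kman$ onto itself and, being $\Foliation$-leaf preserving and fixed on $\Bman$, has on $\Kman$ the product form $\dif(\px,u)=(\dif_u(\px),u)$ with $\dif_u\in\Diff(\Bman)$ depending smoothly on $u$ and $\dif_0=\id_\Bman$; equivalently $\tfib{\dif}=\id$ near $\Bman$, which is exactly why the linearization deformation will carry $\dif$ all the way to the identity on $\Cman$, not merely to a vector bundle morphism near $\Bman$.

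Next I would fix a $\Cinfty$ function $\mu\colon[0;+\infty)\to[0;1]$ with $\mu\equiv0$ on $[0;\aConst]$ and $\mu\equiv1$ on $[\bConst;+\infty)$, and define $\Ghom\colon\DiffFix{\Foliation}{\Bman}\times[0;1]\to\Ci{\Mman}{\Mman}$ by $\Ghom(\dif,t)(\px,u)=\bigl(\dif_{(t+(1-t)\mu(u))u}(\px),u\bigr)$ on $\Kman$ and $\Ghom(\dif,t)=\dif$ on $\Mman\setminus\psi(\Bman\times[0;1))$. This is precisely the map~\eqref{equ:G_homotopy} written in the present coordinates (the homothety being $\delta_s(\px,u)=(\px,su)$), with the simplification that the rescaling cancels on the $u$-coordinate: no Hadamard-lemma argument is needed, $\Ghom(\dif,t)$ is visibly $\Cinfty$, and since $\mu\equiv1$ near $u=1$ the two defining formulas agree and are smooth across the seam $\psi(\Bman\times\{1\})$. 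The routine checks are: $\Ghom(\dif,t)$ is $\Foliation$-leaf preserving; $\dif\mapsto\Ghom(\dif,t)$ is a homomorphism of monoids (immediate from $(\gdif\circ\dif)_u=\gdif_u\circ\dif_u$ and $(\id_{\Mman})_u=\id_\Bman$), hence sends diffeomorphisms to diffeomorphisms; $\Ghom(\dif,t)$ is fixed on $\Bman$ (as $\dif_0=\id_\Bman$); and $\Ghom$ is continuous for the $\Cinfty$ Whitney topology, where compactness of $\Bman$, hence of $\Kman$, is used together with the fact that $\Ghom(\dif,t)$ coincides with $\dif$ off the compact set $\Kman$.

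Finally I would read off the three properties needed to invoke Section~\ref{sect:deformations}: $\Ghom(\dif,1)=\dif$; $\Ghom(\dif,0)$ is the identity on $\Cman$ (for $u\le\aConst$ we have $\mu(u)=0$, so the exponent $(t+(1-t)\mu(u))u$ vanishes at $t=0$); and $\DiffFix{\Foliation}{\Cman}$ is invariant under $\Ghom$ (if $\dif_u=\id_\Bman$ for $u\le\aConst$, then, since $(t+(1-t)\mu(u))u\le u$, also $\dif_{(t+(1-t)\mu(u))u}=\id_\Bman$ on $\Cman$). Hence $(\dif,t)\mapsto\Ghom(\dif,1-t)$ is a deformation of $\DiffFix{\Foliation}{\Bman}$ into $\DiffFix{\Foliation}{\Cman}$, so the inclusion $\DiffFix{\Foliation}{\Cman}\subset\DiffFix{\Foliation}{\Bman}$ is a homotopy equivalence. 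I do not expect a real obstacle here: the only points worth a remark are the $\Cinfty$-smoothness of $\Ghom(\dif,t)$ and the Whitney-continuity of $\Ghom$ (both cheap, since after the collar identification everything reduces to precomposing $\dif$ with a fixed smooth reparametrization of the $u$-coordinate supported in the compact $\Kman$), and the fact that Theorem~\ref{th:linearization_simpler} is stated for a genuine vector-bundle neighborhood of a proper submanifold whereas here $\Bman\subset\partial\Mman$ and only the half-line bundle $\Bman\times[0;+\infty)$ is at hand — this is harmless because the construction of $\Ghom$ invokes the homothety $\delta_s$ only for $s\ge0$, which preserves $\Bman\times[0;+\infty)$; if one prefers, one may instead glue an external collar to $\Mman$ and quote Theorem~\ref{th:linearization_simpler} verbatim.
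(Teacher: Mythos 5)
Your proof is correct and essentially the same as the paper's direct (second) proof: after the collar identification it builds the same deformation by reparametrizing the collar coordinate, and verifies the same points (product form $\dif(\px,u)=(\dif_u(\px),u)$ on $\Kman$, the monoid homomorphism property to get diffeomorphisms, invariance of $\DiffFix{\Foliation}{\Cman}$, and Whitney continuity via compactness of $\Bman$). Your reparametrization $(t+(1-t)\mu(u))u$ is precisely the specialization of formula~\eqref{equ:G_homotopy} to the rank-one half-line bundle with $\func(\px,u)=u$ and it smoothly matches $\dif$ near the seam $u=1$; note that the paper's printed formula~\eqref{equ:def_DFB_DFBC} appears to contain slips (the second coordinate $t$ should be $s$, and the reparametrization should carry the extra factor of $s$ exactly as in your version).
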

\begin{proof}
We will present two proofs.
The first one is ``conceptual'': we will deduce the statement from Theorem~\ref{th:linearization_simpler}, while the second proof is short and straightforward.
Let $\Eman := \psi\bigl(\Bman\times[0;+\infty)\bigr)$.
To simplify notation, we identify $\Bman\times[0;+\infty)$ with $\Eman$ via $\psi$.
Then $\Eman$ is a subset of a total space of the trivial vector bundle $\vbp:\Bman\times\bR\to\Bman$ and is invariant under multiplication by non-negative reals $t\geq0$.

\newcommand\DiffFixFBpC{\DiffFix[\vbp,\Cman]{\Foliation}{\Bman}}
The principal observation is that
$\DiffFix{\Foliation}{\Cman}$ coincides with the subgroup $\DiffFixFBpC$ of $\DiffFix[\vbp]{\Foliation}{\Bman}$ consisting of diffeomorphisms $\dif$ coinciding with the corresponding ``vector bundle morphism'' $\tfib{\dif}:\Bman\times[0;+\infty)\to\Bman\times[0;\infty)$ of $\Eman$ on $\Cman$, which is defined by the same formulas as in Section~\ref{sect:tangent_map_along_fibers}: $\tfib{\dif}(\px) = \lim\limits_{t\to0,\, t>0}\tfrac{1}{t} \dif(t\px)$, but we use only $t>0$.

Indeed, the inclusion $\DiffFix{\Foliation}{\Cman} \subset \DiffFixFBpC$ is evident, since if $\dif$ is fixed on $\Cman$, then $\tfib{\dif}$ is the identity.
Conversely, let $\dif\in\DiffFixFBpC$.
Then $\tfib{\dif}(\px,t) = (\px, \sigma(\px) t)$, where $\sigma:\Bman\to(0;+\infty)$ is some strictly positive $\Cinfty$ function.
Since $\tfib{\dif}$ also preserves the leaves $\Bman\times\{t\}$, $t\in[0;\aConst]$, we should have that $\sigma(\px)\equiv 1$, i.e.\ $\tfib{\dif}$ is the identity, and therefore $\restr{\dif}{\Cman} = \restr{\tfib{\dif}}{\Cman} = \id_{\Cman}$, so $\dif$ is fixed on $\Cman$, and thus belongs to $\DiffFix{\Foliation}{\Cman}$.

{\bf First proof.}
Now one can define the homotopy $\Ghom$ by the same formula~\eqref{equ:G_homotopy}, and prove Theorem~\ref{th:linearization_simpler} obtaining that the inclusion $\DiffFixLFolB \equiv \DiffFixFBpC \subset \DiffFixFolB$ is a homotopy equivalence.

{\bf Second proof.}
Since $\Bman$ is not a ``singular'' leaf of $\Foliation$, the formulas for $\Ghom$ can be simplified so that they do not require passing to limit at $t=0$.
Namely, note that for each $\dif\in\DiffFixFolB$ its restriction to $\Roo$ is given by
\[
    \dif(\px,s) = (\alpha_{\dif}(\px,s), s), \qquad (\px,s)\in\Roo=\Bman\times[0;1],
\]
where $\alpha_{\dif}:\Bman\times[0;1] \to \Bman$ is some $\Cinfty$ map being a diffeomorphism for each $s\in[0;1]$, i.e.\ $\alpha$ is an isotopy.
Then the following map $\Ghom:\DiffFixFolB\times[0;1]\to\DiffFixFolB$
\begin{equation}\label{equ:def_DFB_DFBC}
\Ghom(\dif,t)(\py) =
\begin{cases}
\bigl(\alpha_{\dif}(\px, ts + (1-t)\mu(s)), \, t \bigr), & \py=(\px,s)\in\Bman\times[0;1], \\
\dif(\py), & \py\in \Mman\setminus\bigl( \Bman\times[0;1] \bigr).
\end{cases}
\end{equation}
is a deformation of $\DiffFixFolB$ into $\DiffFix{\Foliation}{\Cman}$, where $\mu:[0;\infty)\to[0;1]$ is the same as above $\Cinfty$ function satisfying $\mu([0;\aConst])=0$ and $\mu([\bConst;+\infty))=1$.
We leave the verification for the reader.
\end{proof}

\begin{sublemma}[{c.f.~\cite[Theorem~4.3]{KhokhliukMaksymenko:IndM:2020}}]
\label{lm:triv_nbh:loc_triv_restr}
Let $\Mman$ be a smooth manifold, $\Bman \subset\Int{\Mman}$ a closed submanifold of codimension $1$ equipped with a regular neighborhood $\vbp:\Bman\times\bR\to\Bman$ being a trivial vector bundle.
Let also $\Foliation$ be a partition of $\Mman$ such that the sets $\Bman\times\{t\}$, $t\in[-1,1]$, are leaves of $\Foliation$.
Then the restriction map $\rho\colon\Diff(\Foliation) \to \Diff(\Bman)$, $\rho(\dif) = \restr{\dif}{\Bman}$ is a locally trivial principal $\DiffFixLFolB$-fibration.
In particular, the image of $\rho$ is a union of path components of $\Diff(\Bman)$.
\end{sublemma}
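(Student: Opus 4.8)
The plan is to realise $\rho$ as a locally trivial fibration by the classical device of exhibiting a continuous local section near $\id_{\Bman}$ and reading everything off from it. Observe first that $\rho$ is well defined and is a continuous homomorphism of topological groups: any $\dif\in\DiffFol$ preserves the leaf $\Bman=\Bman\times\{0\}$, so $\dif(\Bman)=\Bman$, $\restr{\dif}{\Bman}\in\Diff(\Bman)$, $\rho(\dif\circ\dif')=\rho(\dif)\circ\rho(\dif')$, and $\rho^{-1}(\id_{\Bman})=\DiffFixFolB$. Hence it suffices to produce a neighbourhood $\WW$ of $\id_{\Bman}$ in $\Diff(\Bman)$ and a map $s\colon\WW\to\DiffFol$, continuous in the $\Cinfty$ Whitney topology, with $\rho\circ s=\id_{\WW}$ and $s(\id_{\Bman})=\id_{\Mman}$. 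Granting such an $s$, on the one hand $\WW\subset\IM\rho$, so the subgroup $\IM\rho$ is open, hence closed, hence a union of path components of $\Diff(\Bman)$ — the last assertion of the lemma; on the other hand, for any $g_{0}\in\IM\rho$ and any chosen $\dif_{0}\in\rho^{-1}(g_{0})$ the assignments
\[
\rho^{-1}(g_{0}\WW)\ \longrightarrow\ g_{0}\WW\times\rho^{-1}(\id_{\Bman}),\qquad
\dif\ \longmapsto\ \Bigl(\rho(\dif),\ \bigl(\dif_{0}\circ s(g_{0}^{-1}\rho(\dif))\bigr)^{-1}\circ\dif\Bigr),
\]
with inverses $(g,\phi)\mapsto\dif_{0}\circ s(g_{0}^{-1}g)\circ\phi$, are local trivialisations whose transition maps are translations by elements of the fibre, which supplies the principal bundle structure.

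For the section I would use the given trivialisation $\Eman=\Bman\times\bR$ of the regular neighbourhood. Fix an auxiliary Riemannian metric on the compact manifold $\Bman$; then for $g\in\Diff(\Bman)$ sufficiently $\Cinfty$-close to $\id_{\Bman}$ the vector field $V_{g}(\px):=\exp_{\px}^{-1}\!\bigl(g(\px)\bigr)$ is a well-defined $\Cinfty$-small vector field on $\Bman$, depends $\Cinfty$-continuously on $g$, and $g(\px)=\exp_{\px}\!\bigl(V_{g}(\px)\bigr)$. Choose a $\Cinfty$ cut-off $\lambda\colon\bR\to[0;1]$ with $\lambda\equiv1$ on $[-\aConst;\aConst]$ and $\lambda\equiv0$ outside $(-\bConst;\bConst)$, and put
\begin{equation*}
s(g)(\py)=
\begin{cases}
\bigl(\exp_{\px}\!\bigl(\lambda(t)V_{g}(\px)\bigr),\,t\bigr), & \py=(\px,t)\in\Bman\times(-\oConst;\oConst),\\
\py, & \py\in\Mman\setminus\bigl(\Bman\times(-\oConst;\oConst)\bigr).
\end{cases}
\end{equation*}
The two clauses agree where they overlap, since $\lambda\equiv0$ near $\pm\oConst$, so $s(g)$ is a well-defined $\Cinfty$ self-map of $\Mman$ that is the identity off $\Bman\times(-\bConst;\bConst)$ and carries every leaf $\Bman\times\{t\}$, $t\in(-\oConst;\oConst)$, onto itself; thus $s(g)\in\DiffFol$. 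On $\Bman\times[-\aConst;\aConst]$, where $\lambda\equiv1$, it equals the vector bundle morphism $(\px,t)\mapsto(g(\px),t)$ over $g$, so actually $s(g)\in\DiffLFol$; and since every fibre of $\rho$ meets $\DiffLFol$ — replace a preimage $\dif_{0}$ by a diffeomorphism coinciding near $\Bman$ with its fibrewise tangent map, using the linearisation Theorem~\ref{th:linearization_simpler} (e.g.\ for $\func(\px,t)=t^{2}$) — one runs the scheme of the first paragraph inside $\DiffLFol$, which identifies the fibre and structure group with $\DiffFixLFolB$ and yields the asserted principal $\DiffFixLFolB$-fibration. Finally $\rho(s(g))=g$ because $\lambda(0)=1$, $s(\id_{\Bman})=\id_{\Mman}$ because $V_{\id_{\Bman}}\equiv0$, and $g\mapsto s(g)$ is continuous.

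The only genuinely analytic point — and the main obstacle — is to check that for $g$ in a small enough neighbourhood $\WW$ the map $s(g)$ is a \emph{diffeomorphism} of $\Mman$, and that $(g,\py)\mapsto s(g)(\py)$ is smooth and $\Cinfty$-continuous. This reduces to the statement that $\px\mapsto\exp_{\px}\!\bigl(\lambda(t)V_{g}(\px)\bigr)$ is a diffeomorphism of $\Bman$ for every $t$, which holds because, by compactness of $\Bman$ and $\Cinfty$-smallness of $V_{g}$, this family is $\Cinfty$-close to $\id_{\Bman}$ uniformly in $t$; everything else is routine bookkeeping. (One could instead carry out the collar deformation of Lemma~\ref{lm:collar}, or the homotopy $\Ghom$ of Theorem~\ref{th:linearization_simpler}, in a form parametrised by $g\in\Diff(\Bman)$, but the exponential cut-off is the most direct source of the section.)
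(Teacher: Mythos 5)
Your proof is correct and follows essentially the same route as the paper's own sketch: build a continuous local section $s$ of $\rho$ over a neighbourhood $\WW$ of $\id_{\Bman}$, supported in the collar and leaf-preserving there, then read off local triviality by the standard left-translation scheme. Where the paper cites Lima~\cite{Lima:CMH:1964} for a $\Cinfty$-parametrised contraction of $\WW$ to $\id_{\Bman}$ and damps it along the collar with a cutoff, you supply the contraction explicitly as $g(\px)=\exp_{\px}\!\bigl(V_g(\px)\bigr)$ for an auxiliary Riemannian metric and damp the vector field $V_g$ instead; since the exponential homotopy is precisely the standard proof of Lima's lemma, the two are interchangeable — you gain self-containment at the cost of re-deriving a cited fact, and the genuinely analytic step (that $s(g)$ stays a diffeomorphism, and that $(g,\py)\mapsto s(g)(\py)$ is continuous) is the crux in both versions. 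One point worth flagging: you correctly note that $\ker\rho=\DiffFixFolB$, not $\DiffFixLFolB$, and your first paragraph already produces the principal $\DiffFixFolB$-bundle structure, which is what the paper actually uses downstream (e.g.\ in the proof of Theorem~\ref{th:Lpq_full_variant}, where the fibre is taken to be $\DiffGdTA=\DiffFix{\FolLpq{p}{q}}{\MTor}$, with no $\vbp$). The detour in your last paragraph — re-running the scheme inside $\DiffLFol$ so as to literally get $\DiffFixLFolB$ — is sound but trivialises $\rho|_{\DiffLFol}$ rather than $\rho$ on $\Diff(\Foliation)$; the cleaner reading is that $\DiffFixLFolB$ in the lemma's statement is a slip for $\DiffFixFolB$, and your remark that the two versions are linked by the homotopy equivalences of Theorem~\ref{th:linearization_simpler} shows that nothing downstream is affected either way.
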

\begin{proof}
Consider the function $\func\colon \Bman\times\bR\to\Bman$, $\func(t,\px)=t^2$.
Then $\func$ is Morse-Bott, and $\restr{\Foliation}{[-1;1]\times\Bman}$ is a partition into level sets of $\func$.
Now our statement is just a particular case of~\cite[Theorem~4.3]{KhokhliukMaksymenko:IndM:2020}.

Let us also sketch another proof which is close to the second proof of Lemma~\ref{lm:collar}.
To show that $\rho$ is a locally trivial fibration, it suffices to find an open neighborhood $\mathcal{U}$ of $\id_{\Bman}$ in $\Diff(\Bman)$ and a section $s:\mathcal{U}\to\Diff(\Foliation)$ of $\rho$, i.e.\ $\rho\circ s(\dif)=\dif$ for all $\dif\in\mathcal{U}$.

As it is proved in~\cite{Lima:CMH:1964}, there exists a neighborhood $\mathcal{U}$ of $\id_{\Bman}$ in $\Diff(\Bman)$ and a contraction of $\mathcal{\Uman}$ into $\id_{\Bman}$, i.e.\ a homotopy $H:\mathcal{U}\times[0;1]\to\Diff(\Bman)$ such that $H_1=\id_{\mathcal{\Uman}}$, $H_0(\mathcal{\Uman}) = \{\id_{\Bman}\}$, and for every $\dif\in\mathcal{U}$ the map $H_{\dif}:[0;1]\times\Bman\to\Bman$, $H_{\dif}(t,\px)=H(\dif,t,\px)$ is $\Cinfty$.
Then the desired section $s:\mathcal{U}\to\Diff(\Foliation)$ of $\rho$ can be given by
\[  
s(\dif)(\py)  =
\begin{cases}
    \bigl(t,  H(\dif, t + (1-t)\mu(t), \px) \bigr), & \py = (t,\px)\in[-\oConst;\oConst]\times\Bman \\
    \py,                                            & \py\in\Mman\setminus\bigl( [-\bConst;\bConst]\times\Rb \bigr).
\end{cases}
\]
\end{proof}

\subsection{Product foliations and loop spaces}
\label{sect:product_fol}
Our final auxiliary statement reduces computation of homotopy types of certain groups of $\Cinfty$ diffeomorphisms to the continuous situation.

Let $\Bman$ be a smooth manifold and $\Foliation = \{ \Bman\times\{t\} \mid t\in[0;1] \}$ be the foliation on $[0;1]\times\Bman$ into level sets of the projection to $[0;1]$.
Then every $\dif\in\Diff(\Foliation)$ is given by $\dif(t,\px)=(t,\alpha_{\dif}(t,\px))$, where $\alpha_{\dif}:[0;1]\times\Bman\to\Bman$ is a $\Cinfty$ isotopy, which in turn defines a $\Wr{\infty}$-continuous path $\gamma_{\dif}: [0;1]\to \Diff(\Bman)$, $\gamma_{\dif}(t) = \restr{\dif}{\Bman\times\{t\}}$.
Moreover, if we endow $\Cont{[0;1]}{\Diff(\Bman)}$ with the compact open topology, then the correspondence  $\dif\mapsto\gamma_{\dif}$ will be a well-defined continuous injective map $\Gamma:\Diff(\Foliation)\to\Cont{[0;1]}{\Diff(\Bman)}$.

Let also $\OBD = \{\gamma\in \Cont{[0;1]}{\DiffId(\Bman)} \mid \gamma(0)=\gamma(1) = \id_{\Bman}\}$ be the loop space at $\id_{\Bman}$ of the identity path component $\DiffId(\Bman)$ of the group $\Diff(\Bman)$ of diffeomorphisms of $\Bman$.
Then $\Gamma\bigl( \DiffFix{\Foliation}{\{0,1\}\times\Bman} \bigr)  \subset \OBD$.
Let also $J=[0;\aConst] \cup [\bConst;1]$.

\begin{sublemma}[{\rm\cite[Theorem~1.9]{KhokhliukMaksymenko:PIGC:2020}}]
\label{lm:loop_spaces}
The inclusions
\[
    \DiffFix{\Foliation}{\Jman\times\Bman}
    \ \xmonoArrow{j} \
    \DiffFix{\Foliation}{\{0,1\}\times\Bman}
    \ \xmonoArrow{\Gamma} \
    \OBD
\]
are weak homotopy equivalences.
In particular, if $\Bman$ is a compact surface distinct from $2$-sphere and projective plane, then the connected components of the above spaces are weakly homotopy equivalent to a point.
\end{sublemma}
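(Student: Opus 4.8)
The plan is to treat the two inclusions separately: $j$ will be a \emph{genuine} homotopy equivalence produced by an explicit ``squeezing'' deformation, while $\Gamma$ (restricted to $\DiffFix{\Foliation}{\{0,1\}\times\Bman}$) will be shown to induce isomorphisms on all homotopy groups via a smoothing-with-parameters argument; the ``in particular'' part then follows by invoking the known homotopy types of $\DiffId(\Bman)$ for compact surfaces. Throughout, for $\dif\in\Diff(\Foliation)$ write $\dif(t,\px)=(t,\alpha_{\dif}(t,\px))$, so that $\alpha_{\dif}\colon[0;1]\times\Bman\to\Bman$ is a $\Cinfty$ isotopy and $\gamma_{\dif}(t)=\alpha_{\dif}(t,\cdot)$.

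For $j$, I would fix a $\Cinfty$ non-decreasing surjection $\lambda\colon[0;1]\to[0;1]$ with $\lambda\bigl([0;\aConst]\bigr)=\{0\}$ and $\lambda\bigl([\bConst;1]\bigr)=\{1\}$, and for $u\in[0;1]$ put $\lambda_u(t)=(1-u)t+u\lambda(t)$. Then define
\[
    H\colon \DiffFix{\Foliation}{\{0,1\}\times\Bman}\times[0;1]\longrightarrow \DiffFix{\Foliation}{\{0,1\}\times\Bman},
    \qquad
    H(\dif,u)(t,\px)=\bigl(t,\ \alpha_{\dif}(\lambda_u(t),\px)\bigr).
\]
For each $(\dif,u)$ the differential of $H(\dif,u)$ in the coordinates $(t,\px)$ is block lower-triangular with diagonal blocks $1$ and the (invertible) differential of the diffeomorphism $\alpha_{\dif}(\lambda_u(t),\cdot)$, so $H(\dif,u)$ is again an $\Foliation$-leaf preserving diffeomorphism, and it is fixed on $\{0,1\}\times\Bman$ because $\lambda_u(0)=0$ and $\lambda_u(1)=1$. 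One checks directly that $H_0=\id$, that $H_1$ takes values in $\DiffFix{\Foliation}{\Jman\times\Bman}$ (since $\lambda_1=\lambda$ is locally constant near $0$ and near $1$), and that $H$ leaves $\DiffFix{\Foliation}{\Jman\times\Bman}$ invariant (if $\alpha_{\dif}(s,\cdot)=\id$ for $s\in J$ then $\lambda_u(t)\in J$ whenever $t\in J$). By the Deformations remark in Section~\ref{sect:deformations}, $j$ is a homotopy equivalence with homotopy inverse $H_1$.

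For $\Gamma$, observe first that it carries $\DiffFix{\Foliation}{\{0,1\}\times\Bman}$ bijectively onto the set of maps $[0;1]\to\DiffId(\Bman)$ based at $\id_{\Bman}$ whose adjoint $[0;1]\times\Bman\to\Bman$ is jointly $\Cinfty$ — i.e.\ onto the \emph{smooth} based loop space of the Fr\'echet Lie group $\DiffId(\Bman)$ (a path issuing from $\id_{\Bman}$ automatically stays in $\DiffId(\Bman)$). To see that $\Gamma_{*}\colon\pi_k\bigl(\DiffFix{\Foliation}{\{0,1\}\times\Bman}\bigr)\to\pi_k(\OBD)$ is bijective for all $k\ge0$, I would represent a class in $\pi_k(\OBD)$ by a $\Cinfty$-continuous map $\Theta\colon D^k\times[0;1]\to\Diff(\Bman)$ with $\Theta(s,0)=\Theta(s,1)=\id_{\Bman}$ for all $s$ and with $\Theta$ already jointly smooth over $\partial D^k$; reparametrizing the loop coordinate as in the previous step, one may assume each $\Theta(s,\cdot)$ is constant near $t=0$ and near $t=1$. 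Now smooth $\Theta$ in the variables $(s,t)$: cover $D^k\times[0;1]$ by finitely many boxes on each of which $\Theta$ takes values in the domain of a chart of $\Diff(\Bman)$ (such charts exist near every diffeomorphism, e.g.\ via the exponential map of an auxiliary Riemannian metric), transport to the model space, convolve with a smooth mollifier, and reassemble by a subordinate partition of unity. A fine enough mollification keeps all values inside the open set $\Diff(\Bman)$, does not touch $\Theta$ where it already equals the constant $\id_{\Bman}$ (near the ends) nor where it is already smooth (over $\partial D^k$), and is joined to $\Theta$ by a homotopy of the same kind. The resulting jointly-$\Cinfty$ map $\widetilde\Theta$ defines a class in $\pi_k\bigl(\DiffFix{\Foliation}{\{0,1\}\times\Bman}\bigr)$ mapping to that of $\Theta$; repeating the argument with $D^{k+1}$ in place of $D^{k}$ yields injectivity. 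Combined with the homotopy equivalence $j$, both inclusions of the statement become weak homotopy equivalences.

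Finally, if $\Bman$ is a compact surface distinct from $S^2$ and $\bR{P}^2$, then by the results of Smale, Earle--Eells, Earle--Schatz and Gramain~\cite{Smale:ProcAMS:1959, EarleEells:BAMS:1967, EarleEells:JGD:1969, EarleSchatz:DG:1970, Gramain:ASENS:1973} the group $\DiffId(\Bman)$ is homotopy equivalent to a point, to $\Circle$, or to $T^2$ — in all cases an aspherical space, so $\pi_m\DiffId(\Bman)=0$ for $m\ge2$. Hence $\pi_k$ of any path component of $\OBD$ equals $\pi_{k+1}\DiffId(\Bman)=0$ for $k\ge1$, i.e.\ each component of $\OBD$ is weakly contractible, and therefore so is each component of $\DiffFix{\Foliation}{\{0,1\}\times\Bman}$ and of $\DiffFix{\Foliation}{\Jman\times\Bman}$ by the weak homotopy equivalences just obtained. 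The hard part will be the parametrized smoothing in the step for $\Gamma$: one must carry out the mollification continuously in the external parameter $s$, keep it relative to the part $\partial D^k$ of the parameter domain where the family is already smooth, preserve the ``constant near the endpoints'' normalization (which is exactly what guarantees that the smoothed family still consists of honest diffeomorphisms of $[0;1]\times\Bman$ fixed on $\{0,1\}\times\Bman$), and ensure the perturbations stay inside $\Diff(\Bman)$; the chart-covering and partition-of-unity bookkeeping needed to convolve a $\Diff(\Bman)$-valued map is where the care lies.
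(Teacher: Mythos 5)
This lemma is quoted in the paper without proof — it is cited as Theorem 1.9 of the authors' earlier work \cite{KhokhliukMaksymenko:PIGC:2020} (a paper explicitly devoted to ``smooth approximations and their applications to homotopy types''), so there is no in-paper argument to compare against. Your proposal reproduces exactly the two-step strategy that the title of the cited source suggests: an explicit squeezing retraction for the first inclusion, and a parametrized smoothing argument for the second. Both steps are set up correctly. The squeezing homotopy $H(\dif,u)(t,\px)=\bigl(t,\alpha_{\dif}(\lambda_u(t),\px)\bigr)$ does keep $\DiffFix{\Foliation}{\Jman\times\Bman}$ invariant (for $t\in[0;\aConst]$ one has $\lambda_u(t)=(1-u)t\in[0;\aConst]$, and for $t\in[\bConst;1]$ one has $\lambda_u(t)=(1-u)t+u\ge\bConst$), $H_0=\id$, $H_1$ lands in the small group, and each $H(\dif,u)$ is a genuine leaf-preserving diffeomorphism fixed on $\{0,1\}\times\Bman$; so $j$ is a homotopy equivalence by the paragraph on deformations in Section~\ref{sect:deformations}. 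The loop-space identification $\pi_k(\OBD)\cong\pi_{k+1}(\DiffId(\Bman))$ and the appeal to asphericity of $\DiffId(\Bman)$ for surfaces other than $S^2$ and $\bR{P}^2$ also handle the ``in particular'' clause correctly.

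The one place where your write-up is still a sketch rather than a proof is exactly where you flag it: the parametrized, relative mollification that turns a compactly supported continuous family $\Theta\colon D^k\times[0;1]\to\Diff(\Bman)$ (jointly $\Cinfty$ only over $\partial D^k$ and constant $=\id_{\Bman}$ near $t\in\{0,1\}$) into a jointly $\Cinfty$ family, continuously in all the data, without leaving $\Diff(\Bman)$, and relative both to $\partial D^k$ and to the neighborhoods of $t=0,1$. This is the nontrivial content of the cited Theorem 1.9, and it requires genuine care: charts for the Fr\'echet manifold $\Diff(\Bman)$, a partition of unity subordinate to boxes in $D^k\times[0;1]$, and estimates guaranteeing that the reassembled map still takes values in the \emph{open} subset $\Diff(\Bman)\subset\Cinfty(\Bman,\Bman)$. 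You correctly identify this as the hard part; an actual proof would have to spell it out (or invoke a ready-made parametrized smoothing theorem, e.g.\ of Hirsch/Munkres type adapted to mapping spaces). With that caveat, the proposal is sound and, as far as one can tell from the cited paper's title and role in this argument, is in the same spirit as the original proof.
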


\begin{subcorollary}\label{cor:null_homotopic_loops}
Suppose $\Bman$ is connected.
Let $\px\in\Bman$ and $\DXid$ be the subgroup of $\DiffFix{\Foliation}{\Jman\times\Bman}$ consisting of diffeomorphisms $\dif$ for which the loop $t \mapsto \alpha_{\dif}(t,\px)$ (being the trace of $\px$ under the isotopy $\dif$) is null-homotopic.
Let also $\OBDid$ be the path component of $\OBD$ consisting of null-homotopic loops.
Then $\Gamma\circ j(\DXid) \subset \OBDid$, and the inclusion $\DXid \xmonoArrow{\Gamma\circ j} \OBDid$ is a weak homotopy equivalence.
\qed
\end{subcorollary}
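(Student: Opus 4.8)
The statement is a relative/restricted version of Lemma~\ref{lm:loop_spaces}: we single out the path component of $\DiffFix{\Foliation}{\Jman\times\Bman}$ corresponding to null-homotopic traces of a fixed point $\px$, and the matching path component $\OBDid$ of the loop space, and we want the restricted inclusion to still be a weak homotopy equivalence. The plan is to deduce everything formally from Lemma~\ref{lm:loop_spaces} together with the observation that $\DXid$ and $\OBDid$ are \emph{exactly} the preimages of each other under the composite $\Gamma\circ j$, so that the claim reduces to a general fact: if a map is a weak homotopy equivalence, then its restriction over a union of path components of the target (with preimage the corresponding union of path components of the source) is again a weak homotopy equivalence.

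First I would make precise the bookkeeping of path components. Since $\Bman$ is connected, evaluation at $\px$ gives a continuous map $\ev_{\px}\colon\OBD\to\Omega(\Bman,\px)$ sending a loop $\gamma$ of diffeomorphisms to the loop $t\mapsto\gamma(t)(\px)$; composing with $\Gamma\circ j$ recovers the trace loop $t\mapsto\alpha_{\dif}(t,\px)$. The subgroup $\DXid$ is by definition the subset of $\DiffFix{\Foliation}{\Jman\times\Bman}$ on which this trace loop is null-homotopic, i.e.\ $\DXid=(\Gamma\circ j)^{-1}(\OBDid)$ where $\OBDid=\ev_{\px}^{-1}(\text{null-homotopic loops})$. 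Two points must be checked here: (i) $\OBDid$ is genuinely a union of path components of $\OBD$ — this holds because the homotopy class of $\ev_{\px}(\gamma)$ in $\pi_1(\Bman,\px)$ is a locally constant (hence path-component-constant) function of $\gamma\in\OBD$; and (ii) consequently $\DXid$ is a union of path components of $\DiffFix{\Foliation}{\Jman\times\Bman}$ (in particular it is open, so the claim that it is a ``subgroup'' is consistent, being the identity component's worth of components closed under the group operations — the trace of a composition is the concatenation of traces up to the evident homotopy, and the trace of an inverse is the reverse loop). This also immediately gives the first assertion $\Gamma\circ j(\DXid)\subset\OBDid$.

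Next I would invoke the general principle. By Lemma~\ref{lm:loop_spaces} the composite $\Gamma\circ j\colon\DiffFix{\Foliation}{\Jman\times\Bman}\to\OBD$ is a weak homotopy equivalence; in particular it induces a bijection on $\pi_0$, so the path components of $\DXid$ are precisely those mapping into $\OBDid$, and every path component of $\OBDid$ is hit. For weak homotopy equivalences the higher homotopy groups are computed component by component, so the restriction $(\Gamma\circ j)|_{\DXid}\colon\DXid\to\OBDid$ is a bijection on $\pi_0$ and an isomorphism on all $\pi_k$, $k\ge1$, at every basepoint; hence it is a weak homotopy equivalence. Concretely, pick $\dif\in\DXid$ as basepoint; since $\DXid$ and $\OBDid$ are open in the respective larger spaces and contain the path component of $\dif$ and of $\Gamma j(\dif)$, the inclusion-induced maps $\pi_k(\DXid,\dif)\to\pi_k(\DiffFix{\Foliation}{\Jman\times\Bman},\dif)$ and $\pi_k(\OBDid,\Gamma j(\dif))\to\pi_k(\OBD,\Gamma j(\dif))$ are isomorphisms, and the square with $\Gamma\circ j$ and its restriction commutes, forcing $(\Gamma\circ j)|_{\DXid}$ to be an isomorphism on all $\pi_k$.

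I do not expect a serious obstacle: the only thing requiring care is the local constancy of the trace's homotopy class, which is where one uses connectedness of $\Bman$ (so that $\Omega(\Bman,\px)$ has $\pi_0\cong\pi_1(\Bman,\px)$) and the continuity of $\ev_{\px}$ in the compact-open topology on $\OBD$. Everything else is the formal remark that a weak homotopy equivalence restricts to a weak homotopy equivalence over any saturated-by-components subset of the target. Thus the corollary follows, and since $\Gamma\circ j$ is precisely the map named $\Gamma\circ j$ in the statement, the proof is complete. \qed
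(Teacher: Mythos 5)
Your proof is correct and is exactly the argument the paper leaves implicit behind its \qed: since $\OBDid=(\ev_\px)^{-1}(\text{null-homotopic loops})$ is a union of path components of $\OBD$ and $\DXid=(\Gamma\circ j)^{-1}(\OBDid)$ by construction, restricting the weak homotopy equivalence $\Gamma\circ j$ of Lemma~\ref{lm:loop_spaces} over this saturated subset is again a weak homotopy equivalence. Your aside verifying that $\DXid$ is a subgroup (via the Eckmann--Hilton comparison of pointwise multiplication and concatenation of loops in a topological group) is a sensible check that the paper does not bother to spell out, and your reading of ``$\OBDid$'' as the union of components whose trace at $\px$ is null-homotopic is the one that makes the statement correct for general connected $\Bman$, not merely in the application to $T^2$ where $\ev_\px$ induces an isomorphism on $\pi_1$.
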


\section{Vector bundles over $1$-dimensional manifolds}\label{sect:vb_over_1manifolds}
\subsection{Vector bundles over the circle}\label{sect:vb_over_circle}
\label{sect:vb_over_s1}
Let $\hvbp\colon\bRnR\to\bR$ be the trivial vector bundle.
Consider the following diffeomorphisms
$\alpha,\beta\colon\bRnR\to\bRnR$ and $\xi\colon\bR\to\bR$ given by
\begin{align*}
    \alpha(\px,\pvi{1},\pvi{2},\ldots,\pvi{n-1},\pvi{n})  &= \bigl(\px+1, \pvi{1},\ldots,\pvi{n-1},      \pvi{n}\bigr), \\
     \beta(\px,\pvi{1},\pvi{2},\ldots,\pvi{n-1},\pvi{n})  &= \bigl(\px+1, \pvi{1},\ldots,\pvi{n-1},-\pvi{n}\bigr), \\
     \xi(\px) &= \px+1,
\end{align*}
$\px\in\bR$, and $(\pvi{1},\ldots,\pvi{n})\in\bR^{n}$.
They have no fixed points and generate free and totally disconnected actions of the group $\bZ$.
Evidently, $\bR/\xi = \Circle$ and $(\bRnR)/\alpha = \SRn$.
Moreover, the quotient $(\bRnR)/\beta$ is usually denoted by $\StRn$.
Since $\hvbp\circ\alpha=\gamma\circ\hvbp$ and $\hvbp\circ\beta=\gamma\circ\hvbp$, we have natural projections between quotients: $\por\colon\SRn \to \Circle$ and $\pnor\colon\StRn \to \Circle$ being vector bundles such that $\por$ is trivial, while $\pnor$ is non-trivial.
It is well-known and easy that every $n$-dimensional vector bundle over $\Circle$ is isomorphic either to $\por$ or to $\pnor$.
Finally we get the following commutative diagrams:
\begin{equation}\label{equ:vb_over_S1}
\begin{gathered}
    \xymatrix{
        \bRnR \ar[d]_-{\qor} \ar[r]^-{\hvbp} & \bR \ar[d]^-{\qs} \\
        \SRn                  \ar[r]^-{\por}  & \Circle
    }
    \qquad\qquad
    \xymatrix{
        \bRnR \ar[d]_-{\qnor} \ar[r]^-{\hvbp} & \bR \ar[d]^-{\qs} \\
        \StRn                 \ar[r]^-{\pnor}  & \Circle
    }
\end{gathered}
\end{equation}
in which vertical arrows are infinite cyclic covering maps, and horizontal ones are vector bundles.

We will consider below the following subgroup
\[
    \DiffPR = \{ \dif\in\Diff(\bR) \mid \dif(\px+1)=\dif(\px)+1, \ \px\in\bR \}
\]
of $\Diff(\bR)$ consisting of diffeomorphisms commuting with $\xi$.

\subsection{Main result}\label{sect:vb_over_1manifolds:main_result}
Let $\vbp\colon\Eman\to\Circle$ be a vector bundle of rank $n$ over the circle, $\vbq\colon\bRnR\to\Eman$ be the universal covering of $\Eman$,
$\gamma\colon\bRnR\to\bRnR$ be the diffeomorphism generating the group $\bZ$ of covering transformations, and $\hvbp\colon\bRnR\to\bR$ be the trivial vector bundle.

As noted in Section~\ref{sect:vb_over_s1}, one can assume that either $\Eman=\SRn$ and $\gamma=\alpha$ or $\Eman=\StRn$ and $\gamma=\beta$.

Let $\tfunc\colon\bRnR\to[0;+\infty)$ be a $\Cinfty$ function having the following properties.
\begin{enumerate}[label={\rm(F\arabic*)}]
\item\label{enum:func:0}
$\tfunc^{-1}(0)=\bR\times0$ and this set coincides with the set of critical points of $\tfunc$.

\item\label{enum:func:fg=f}
$\tfunc\circ\gamma=\tfunc$, so $\tfunc$ induces a $\Cinfty$ function $\func\colon\Eman\to[0;+\infty)$ such that $\tfunc=\func\circ\vbq$.

\item\label{enum:func:T1_bounded}
The set $\tfunc^{-1}\bigl([0;1]\bigr) \cap \bigl( [0;1]\times\bR^{n} \bigr)$ is bounded in $\bRnR$, and thus compact.

\item\label{enum:func:fld}
There exists an everywhere non-zero $\Cinfty$ vector field
\[
    \Fld = a_0(\px,\pv)\ddd{}{\px} + a_1(\px,\pv)\ddd{}{\pvi{1}} + \cdots + a_n(\px,\pv)\ddd{}{\pvi{n}}
\]
on $\bRnR$ such that $\ff{\tfunc}{\Fld}\equiv0$ and $a_0(\px,\pv)>0$ on all of $\bRnR$.
\end{enumerate}
For $t>0$ denote
\begin{align*}
    \regNbh{t} &:= \func^{-1}\bigl([0;t]\bigr), &
    \tregNbh{t} &:= \tfunc^{-1}\bigl([0;t]\bigr) = \vbq^{-1}(\regNbh{t}).
\end{align*}
Let also $\Foliation$ be the partition of $\Roo$ into connected components of level sets $\func^{-1}(t)$, $t\in[0;1]$.
It follows that each $\regNbh{t}$ is $\Foliation$-saturated.
Finally, let
\begin{itemize}
\item
$\Qman\subset\overline{\Roo\setminus\Rb} \equiv \func^{-1}\bigl([\bConst,\oConst]\bigr)$ be a non-empty closed subset;
\item
$\DiffFixFQ$ be the group of \FLP\ diffeomorphisms preserving orientation of $\Circle$ and fixed on $\Qman$;
\item
$\DiffFixFQS$ be the subgroup of $\DiffFixFQ$ fixed on $\Circle$.
\end{itemize}
Fix any path $\eta\colon[0;1]\to\Roo$ such that $\eta(0)\in\Qman$ and $\eta(1)\in\Circle$ and denote by $\DiffFixNFQS$ the subset of $\DiffFixFQS$ consisting of diffeomorphisms $\dif$ for which the paths $\eta$ and $\dif\circ\eta$ are homotopic relative to their ends.
Our aim is to prove the following
\begin{subtheorem}\label{th:DiffFixNFQS}
$\DiffFixNFQS$ is a subgroup of $\DiffFixFQ$ and the inclusion
\[
    \DiffFixNFQS \,\subset\, \DiffFixFQ
\]
is a homotopy equivalence.
\end{subtheorem}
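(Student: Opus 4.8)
The plan is to exhibit the inclusion $\DiffFixNFQS\subset\DiffFixFQ$ as the inclusion of the kernel fibre of an explicit locally trivial bundle whose base is the contractible group $\DiffPR$. Fix a lift $\teta\colon[0;1]\to\tRoo$ of the path $\eta$ along the universal covering $\vbq\colon\tRoo\to\Roo$; here $\tRoo=\tfunc^{-1}([0;1])$ is simply connected since, by~\ref{enum:func:0}, it deformation retracts onto $\vbq^{-1}(\Circle)=\bR\times 0$ along a gradient-like flow of $\tfunc$. Every $\dif\in\DiffFixFQ$ is a leaf preserving diffeomorphism of $\Roo$ fixing $\eta(0)$, hence lifts to a unique diffeomorphism $\tdif\colon\tRoo\to\tRoo$ with $\tdif(\teta(0))=\teta(0)$; moreover $\tdif$ preserves $\bR\times 0$, and since $\dif$ preserves the orientation of $\Circle$ it commutes with the generator $\gamma$ of the deck group, so that $\restr{\tdif}{\bR\times0}\in\DiffPR$. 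First I would check that
\[
    \Psi\colon\DiffFixFQ\longrightarrow\DiffPR,
    \qquad
    \Psi(\dif)=\restr{\tdif}{\bR\times0},
\]
is a continuous homomorphism: uniqueness of lifts gives $\widetilde{\dif_{1}\circ\dif_{2}}=\widetilde{\dif_{1}}\circ\widetilde{\dif_{2}}$, whence $\Psi(\dif_{1}\circ\dif_{2})=\Psi(\dif_{1})\circ\Psi(\dif_{2})$, and continuity of $\Psi$ is the continuity of the lifting operation in the $\Cinfty$ Whitney topology.

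Next I would identify $\ker\Psi$ with $\DiffFixNFQS$, which in particular shows that $\DiffFixNFQS$ is a (normal) subgroup of $\DiffFixFQ$. If $\Psi(\dif)=\id_{\bR}$, then $\dif$ is fixed on $\Circle$ and $\teta(1)\in\bR\times0$ is a fixed point of $\tdif$; since $\tRoo$ is simply connected, $\tdif\circ\teta$ and $\teta$ are homotopic rel ends, and projecting by $\vbq$ gives $\dif\circ\eta\simeq\eta$ rel ends, i.e.\ $\dif\in\DiffFixNFQS$. Conversely, if $\dif\in\DiffFixNFQS$, then $\restr{\tdif}{\bR\times0}$ is a deck transformation $\xi^{k}$ with $k\in\bZ$; lifting a homotopy $\dif\circ\eta\simeq\eta$ rel ends to $\tRoo$, starting from the lift $\tdif\circ\teta$ of $\dif\circ\eta$, forces $\xi^{k}(\teta(1))=\teta(1)$, hence $k=0$ and $\Psi(\dif)=\id_{\bR}$.

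The core step is to produce a continuous local section $\sigma$ of $\Psi$ over a neighbourhood of $\id_{\bR}$ in $\DiffPR$ with $\sigma(\id_{\bR})=\id_{\bR}$; by the group structure (right translation of $\sigma$) this makes $\Psi$ a locally trivial principal $\DiffFixNFQS$-bundle onto an open subgroup of the connected group $\DiffPR$, hence onto all of it. For the section I would use the vector field $\Fld$ of~\ref{enum:func:fld}, replacing it if necessary by a $\gamma$-invariant one: the admissible vector fields (nowhere zero, tangent to the leaves, with $a_{0}>0$ in the notation of~\ref{enum:func:fld}) form a fibrewise nonempty convex condition, so they descend to $\Eman$. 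Then the flow $\Flow$ of $\Fld$ is a leaf preserving flow on $\Roo$ whose restriction to $\Circle$ is a flow with every orbit equal to $\Circle$. For $\gdif\in\DiffPR$ close to $\id_{\bR}$ there is a unique small $\Cinfty$ function $\tau_{\gdif}\colon\Circle\to\bR$, depending continuously on $\gdif$, with $\restr{\Flow_{\tau_{\gdif}}}{\Circle}$ equal to the diffeomorphism of $\Circle$ covered by $\gdif$; extend $\tau_{\gdif}$ over $\Roo$, multiply it by a cut-off function equal to $1$ on $\Circle$ and vanishing on $\func^{-1}\bigl([\bConst;\oConst]\bigr)\supset\Qman$, obtaining $\alpha_{\gdif}$, and set $\sigma(\gdif):=\Flow_{\alpha_{\gdif}}$. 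For $\gdif$ near $\id_{\bR}$ the function $\alpha_{\gdif}$ is $\Cr{1}$-small, so $\sigma(\gdif)$ is a diffeomorphism by Lemma~\ref{lm:shift:local_diff}; it is $\Foliation$-leaf preserving, fixed on $\Qman$, and preserves the orientation of $\Circle$ by construction; and the cut-off guarantees that the lift $\tFlow_{\tilde\alpha_{\gdif}}$ of $\sigma(\gdif)$ fixes $\teta(0)$, so it is the canonical lift, and, since at $\gdif=\id_{\bR}$ it restricts to $\id_{\bR}$, a continuity argument removes any deck-transformation ambiguity and gives $\Psi(\sigma(\gdif))=\gdif$. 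Finally $\DiffPR$ is contractible via $(\gdif,s)\mapsto\gdif_{s}$, $\gdif_{s}(\px)=(1-s)\gdif(\px)+s\px$, a homotopy staying in $\DiffPR$ and fixing $\id_{\bR}$; so the bundle $\DiffFixNFQS\to\DiffFixFQ\xrightarrow{\Psi}\DiffPR$ over the contractible paracompact base $\DiffPR$ is trivial, $\DiffFixFQ$ is homeomorphic over $\DiffPR$ to $\DiffPR\times\DiffFixNFQS$ by a map taking $\DiffFixNFQS=\Psi^{-1}(\id_{\bR})$ identically onto $\{\id_{\bR}\}\times\DiffFixNFQS$, and the latter inclusion, hence $\DiffFixNFQS\subset\DiffFixFQ$, is a homotopy equivalence.

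The hard part will be the construction of the local section $\sigma$ together with the verification $\Psi\circ\sigma=\id$: one must realise a prescribed near-identity element of $\DiffPR$ by a genuine $\Foliation$-leaf preserving diffeomorphism concentrated near the central circle and disjoint from $\Qman$, whose canonical lift restricts to $\bR\times 0$ exactly as the given map and not merely up to a deck transformation. Everything else — continuity of the lifting operation, the kernel computation, and contractibility of $\DiffPR$ — is routine once the covering-space bookkeeping has been set up.
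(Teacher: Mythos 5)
Your proposal is correct in outline, but it takes a genuinely different route from the paper's at the last and most substantial step. Your covering-space set-up, the definition of the restriction homomorphism $\Psi$ (the paper's $r$), the identification $\ker\Psi=\DiffFixNFQS$, and the idea of using the ($\gamma$-averaged) flow of $\Fld$ to realize elements of $\DiffPR$ are all exactly what the paper does. The divergence is here: you construct only a \emph{local} section $\sigma$ near $\id_{\bR}$ (using $\Cr{1}$-smallness of the shift function to invoke Lemma~\ref{lm:shift:local_diff}), then propagate it by right translation to a locally trivial principal $\DiffFixNFQS$-bundle over the open image, invoke connectedness of $\DiffPR$ to see the image is everything, and finally appeal to triviality of numerable principal bundles over a contractible paracompact base. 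The paper instead builds a \emph{global} continuous section $s\colon\DiffPR\to\DiffFixFQ$: the key observation is that for the shift function $\tuphi=(1-\mu\circ\tfunc)\cdot(\uphi\circ\hvbp)$ with $\uphi(\px)=\phi(\px)-\px$, one has $\ff{\tuphi}{\Fld}=(1-\mu\circ\tfunc)\,\uphi'_{\px}>-1$ for \emph{every} $\phi\in\DiffPR$ (since $\phi'>0$ always), not just near-identity ones; injectivity and surjectivity of $\Flow_{\tuphi}$ are then checked globally by a compactness/intermediate-value argument. This yields an explicit homeomorphism $\zeta\colon\DiffFixNFQS\times\DiffPR\cong\DiffFixFQ$, $\zeta(\dif,\phi)=s(\phi)\circ\dif$, taking $\DiffFixNFQS\times\{\id\}$ onto $\DiffFixNFQS$, and contractibility of $\DiffPR$ finishes it directly. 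What each buys: the paper's global-section argument avoids any appeal to bundle classification over infinite-dimensional Fr\'echet bases (paracompactness, numerability, contractible base $\Rightarrow$ trivial), and gives an explicit strong deformation retraction; your local-section version needs less fine analysis to see $\sigma(\gdif)$ is a diffeomorphism but must then invoke general principal-bundle theory and a translation argument, and it only proves local triviality rather than producing a concrete product decomposition. Both are valid, and your remark that the local-to-global step uses openness of the image in the connected group $\DiffPR$ and triviality over a contractible base is a correct (if heavier) alternative to the paper's direct global estimate.
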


We will prove Theorem~\ref{th:DiffFixNFQS} in Section~\ref{sect:proof:th:DiffFixNFQS} and now consider several particular cases.

\subsection{Examples}\label{sect:vb_over_1manifolds:examples}
\begin{subexample}\label{enum:DiffFixNFQS:exmp:1}\rm
Let $\tgfunc\colon\bR^n\to[0;+\infty)$ be any $\Cinfty$ function such that $\tgfunc^{-1}(0)=\{0\} \in \bR^{n}$, this point is a unique critical point, and the set $\lambda^{-1}\bigl([0;1]\bigr)$ is bounded in $\bR^{n}$.
Define the function $\tfunc\colon\bRnR\to[0;+\infty)$ and a vector field $\Fld$ on $\bRnR$ by
\begin{align*}
    \tfunc(\px,\pv) &= \tgfunc(\pv), &
    \Fld(\px,\pv) = \ddd{}{\px}.
\end{align*}
Then $\tfunc$ and $\Fld$ satisfy~\ref{enum:func:0}-\ref{enum:func:fld} for $\gamma=\alpha$, i.e.\ for the case when $\Eman=\SRn$.

If in addition, the function $\tgfunc$ does not depend on the sign of $\pvi{n}$, that is
\[ \tgfunc(\pvi{1},\ldots,\pvi{n-1},-\pvi{n})=\tgfunc(\pvi{1},\ldots,\pvi{n-1},\pvi{n}),\]
then $\tfunc$ and $\Fld$ also satisfy~\ref{enum:func:0}-\ref{enum:func:fld} for $\gamma=\beta$, i.e.\ for non-orientable case.
\end{subexample}

\begin{subexample}\label{enum:DiffFixNFQS:exmp:2}\rm
For instance, the previous example holds if $\tgfunc(\pvi{1},\ldots,\pvi{n})=\pvi{1}^2+\cdots+\pvi{n}^2$.

If in that case $\Eman=\SRn$, then $\Foliation = \{ \Circle\times S^{n-1}_{t}\}_{t\in(0;1]} \cup \{ \Circle\times 0\}$ for $n\geq2$ is a partition of $\Roo=\Circle\times D_{1}^{n}$ into \myemph{tubes}, while for $n=1$, $\Foliation = \{ \Circle\times t\}_{t\in[-1;1]}$ is a partition into parallel circles.

On the other hand, if $\Eman=\StRn$, then for $n\geq2$, $\Foliation$ is a partition of $\Roo=\Circle\stimes D_{1}^{n}$ into $\Circle\times 0$ and non-trivial sphere bundles $\Circle\stimes S^{n-1}_{t}$, $t\in(0;1]$.
In particular, for $n=2$, $\Roo$ is a solid Klein bottle, while each leaf is a Klein bottle.
Also for $n=1$, $\Foliation$ is a partition of the M\"obius band $\Eman$ into the central circle and ``total spaces of two-sheeted coverings'' $\Circle\to\Circle$.
\end{subexample}

\begin{subexample}\label{enum:DiffFixNFQS:exmp:3}\rm
Let $\tgfunc\colon\bR^{n}\to[0;\infty)$ be a function as in~\ref{enum:DiffFixNFQS:exmp:1}, and $\phi\colon\bR\to(0;+\infty)$ any $\Cinfty$ strictly positive periodic function with period $1$, i.e.\ $\phi(\px+1)=\phi(\px)$ for $\px\in\bR$.
Suppose also that \myemph{$\tgfunc$ belongs to its Jacobi ideal}, i.e.\ there exist $\Cinfty$ functions $\delta_1,\ldots,\delta_n\colon\bRnR\to\bR$ such that
\begin{equation}\label{equ:in_Jacobi_ideal}
\tgfunc  \equiv  \delta_1 \tgfunc'_{\pvi{1}} + \cdots + \delta_n \tgfunc'_{\pvi{n}}.
\end{equation}
In other words, $\tgfunc$ is a linear combination with smooth coefficients of its partial derivatives,

For example, every homogeneous function $\tgfunc\colon\bR^{n}\to[0;\infty)$ of order $k$ has that property due to the well known Euler identity:
\[
\tgfunc = \tfrac{1}{k} \sum_{i=1}^{n} \pvi{i} \tgfunc'_{\pvi{n}}.
\]
More generally, $\tgfunc$ is called \myemph{quasi homogeneous} with weights $(\omega_1,\ldots,\omega_n)$, where $\omega_1>0$ and $\sum_{i=1}^{n} \omega_i=1$, whenever
\[
\tgfunc(t^{\omega_1}\pvi{1},\ldots,t^{\omega_n}\pvi{n}) = t \tgfunc(\pvi{1},\ldots,\pvi{n}).
\]
Then
\[
\tgfunc = \sum_{i=1}^{n} \tfrac{\pvi{i}}{\omega_i} \tgfunc'_{\pvi{n}}.
\]

Define the function $\tfunc\colon\bRnR\to[0;+\infty)$ and a vector field $\Fld$ on $\bRnR$ by
\begin{align*}
    \tfunc(\px,\pv) &= \phi(\px)\tgfunc(\pv), &
    \Fld(\px,\pv)   &= \ddd{}{\px} + \tfrac{\phi'_{\px}(\px)}{\phi(\px)} \bigl( \delta_1 \ddd{}{\pvi{1}} + \ldots + \delta_n \ddd{}{\pvi{n}} \bigr).
\end{align*}
Then condition~\ref{enum:func:0} holds.
Also
\begin{align*}
\ff{\tfunc}{\Fld}(\px,\pv) &=
\phi'_{\px}(\px) \tgfunc(\pv) + \tfrac{\phi'_{\px}(\px)}{\phi(\px)} \bigl( \delta_1 \phi(\px) \tgfunc'_{\pvi{1}} + \ldots + \delta_n \phi(\px) \tgfunc'_{\pvi{n}} \bigr) \stackrel{\eqref{equ:in_Jacobi_ideal}}{\equiv} 0,
\end{align*}
which proves~\ref{enum:func:fld}.
Moreover, $\tfunc\circ\alpha(\px,\pv) = \phi(\px+1)\tgfunc(\pv)= \phi(\px)\tgfunc(\pv)=\tfunc(\px,\pv)$, so~\ref{enum:func:fg=f}  holds in the orientable case of $\Eman$.
Also if $\tgfunc$ does not depend on the sign of $\pvi{n}$, then  $\tfunc\circ\beta=\tfunc$, so~\ref{enum:func:fg=f} also holds in the orientable case of $\Eman$.
\end{subexample}

\begin{sublemma}\label{lm:improve_vf_F}
Suppose a function $\tfunc\colon\bRnR\to[0;1]$ and a vector field $\Fld$ on $\bRnR$ satisfy conditions~\ref{enum:func:0}-\ref{enum:func:fld}.
Then, replacing $\Fld$ with some other vector field, one can assume that $\Fld$ has the following extended property:
\begin{enumerate}[label={\rm(F4$'$)}]
\item\label{enum:func:fld_better}
$\Fld = \ddd{}{\px} + a_1(\px,\pv)\ddd{}{\pvi{1}} + \cdots + a_n(\px,\pv)\ddd{}{\pvi{n}}$,
$\gamma^{*}\Fld = \Fld$, and $\ff{\tfunc}{\Fld}\equiv 0$.
\end{enumerate}
This implies that $\Fld$ generates a global flow $\Flow\colon\tRoo\times\bR\to\tRoo$ satisfying
\begin{enumerate}[label={\rm(\arabic*)}]
\item\label{enum:flow:pF_shift}
$\hvbp\circ\Flow(\px,\pv,t) = t+\px$ for all $(\px,\pv,t)\in\bRnR\times\bR$;
\item\label{enum:flow:Ft_g__g_Ft}
$\Flow_t\circ\gamma=\gamma\circ\Flow_t$;
\item\label{enum:flow:f_Ft__f}
$\tfunc\circ\Flow_t = \tfunc$ for all $t\in\bR$;
\end{enumerate}
In particular, \ref{enum:flow:pF_shift} implies that for every orbit $\omega$ of $\Flow$ the restriction $\vbp\colon\omega\to\bR\times0$ is a diffeomorphism.
\end{sublemma}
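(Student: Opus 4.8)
The plan is to obtain~\ref{enum:func:fld_better} by two successive modifications of $\Fld$, and then to derive the flow and its properties from~\ref{enum:func:fld_better}. The first modification is trivial: by~\ref{enum:func:fld} the coefficient $a_{0}$ is everywhere positive, so I would replace $\Fld$ by $\tfrac{1}{a_{0}}\Fld$; this remains a nowhere zero $\Cinfty$ vector field, its $\ddd{}{\px}$-component is now identically $1$, and the identity $\ff{\tfunc}{\Fld}\equiv0$ is unaffected, since rescaling a field by a positive function just multiplies the directional derivative of $\tfunc$ by that function. The real point is the second modification, making $\Fld$ invariant under the covering transformation $\gamma$; because $\gamma$ generates an \emph{infinite} cyclic group, one cannot average, and the hard part will be to do this while keeping both the normalised $\ddd{}{\px}$-component and $\ff{\tfunc}{\Fld}\equiv0$.

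For that I would periodise with a partition of unity subordinate to the $\bZ$-action. Fix a $\Cinfty$ function $\rho\colon\bR\to[0;1]$ with $\rho\equiv0$ on $(-\infty;0]$ and $\rho\equiv1$ on $[1;+\infty)$, put $\lambda(\px):=\rho(\px)-\rho(\px-1)$, so that $\supp\lambda\subset[0;2]$ and $\sum_{k\in\bZ}\lambda(\px-k)\equiv1$ by telescoping, and regard $\lambda$ as the function $(\px,\pv)\mapsto\lambda(\px)$ on $\bRnR$. Then set
\[
    \Fld' \ := \ \sum_{k\in\bZ}\,(\gamma^{k})_{*}\bigl(\lambda\,\Fld\bigr).
\]
Near any point only the finitely many $k$ with $\px-k\in[0;2]$ contribute (as $\Fld$ is nowhere zero and $\gamma^{k}$ shifts the $\px$-coordinate by $k$), so $\Fld'$ is a well-defined $\Cinfty$ vector field; it is $\gamma$-invariant since $\gamma_{*}\Fld'=\sum_{k}(\gamma^{k+1})_{*}(\lambda\Fld)=\Fld'$; its $\ddd{}{\px}$-component equals $\sum_{k}\lambda(\px-k)\equiv1$, so it is nowhere zero and has the shape required by~\ref{enum:func:fld_better}; and, since $\tfunc\circ\gamma^{k}=\tfunc$ by~\ref{enum:func:fg=f},
\[
    \bigl((\gamma^{k})_{*}(\lambda\Fld)\bigr)(\tfunc)
    =\bigl((\lambda\Fld)(\tfunc\circ\gamma^{k})\bigr)\circ\gamma^{-k}
    =\bigl(\lambda\cdot\ff{\tfunc}{\Fld}\bigr)\circ\gamma^{-k}\equiv0,
\]
so $\ff{\tfunc}{\Fld'}\equiv0$ as well. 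Replacing $\Fld$ by $\Fld'$ yields~\ref{enum:func:fld_better}.

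Now assume~\ref{enum:func:fld_better}. Since $\ff{\tfunc}{\Fld}\equiv0$, $\Fld$ is tangent to every level set of $\tfunc$, hence to $\tRoo=\tfunc^{-1}([0;1])$; let $\Flow$ be its local flow on $\tRoo$. Along an integral curve one has $\tfrac{d}{dt}\bigl(\hvbp\circ\Flow(\px,\pv,t)\bigr)=d\hvbp(\Fld)=1$ and $\tfrac{d}{dt}\bigl(\tfunc\circ\Flow(\px,\pv,t)\bigr)=\ff{\tfunc}{\Fld}=0$, so on its maximal interval $\hvbp\circ\Flow(\px,\pv,t)=\px+t$ and $\tfunc\circ\Flow(\px,\pv,t)=\tfunc(\px,\pv)$; the latter reconfirms that $\tRoo$ is invariant. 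Completeness I would prove by confinement: if the forward escape time of $(\px,\pv)\in\tRoo$, with $c:=\tfunc(\px,\pv)$, were a finite number $T_{+}$, then for $t\in[0;T_{+})$ the curve would remain in $\tfunc^{-1}(c)\cap\bigl([\px;\px+T_{+}]\times\bR^{n}\bigr)$, which is contained in the union, over the finitely many integers $k$ with $[k;k+1]\cap[\px;\px+T_{+}]\neq\varnothing$, of the sets $\tfunc^{-1}([0;1])\cap\bigl([k;k+1]\times\bR^{n}\bigr)$ (here $c\le1$ because $(\px,\pv)\in\tRoo$); by~\ref{enum:func:T1_bounded} the one with $k=0$ is compact, and by~\ref{enum:func:fg=f} the $k$-th one is its image under $\gamma^{k}$, hence compact as well, so the curve would be confined to a compact set and would extend past $T_{+}$ --- a contradiction; the backward case is identical. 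Hence $\Flow\colon\tRoo\times\bR\to\tRoo$ is a global flow and the two identities above are~\ref{enum:flow:pF_shift} and~\ref{enum:flow:f_Ft__f}. Property~\ref{enum:flow:Ft_g__g_Ft} follows from the naturality of flows under symmetries: since $\gamma_{*}\Fld=\Fld$, the curve $t\mapsto\gamma\bigl(\Flow_{t}(\gamma^{-1}(\px,\pv))\bigr)$ is an integral curve of $\Fld$ through $(\px,\pv)$, so by uniqueness it equals $\Flow_{t}(\px,\pv)$, i.e.\ $\Flow_{t}\circ\gamma=\gamma\circ\Flow_{t}$. Finally, for an orbit $\omega$ through $(\px_{0},\pv_{0})$, the orbit map $t\mapsto\Flow(\px_{0},\pv_{0},t)$ is a $\Cinfty$ immersion (as $\Fld\neq0$) whose composition with $\hvbp$ is the proper bijection $t\mapsto\px_{0}+t$ of $\bR$; hence the orbit map is injective and proper, so it is a closed embedding of $\bR$ onto $\omega$, and in this parametrisation $\restr{\hvbp}{\omega}$ is the diffeomorphism $t\mapsto\px_{0}+t$ of $\omega$ onto $\bR\times0$, which is the last assertion.

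I expect the main obstacle to be the $\gamma$-equivariant correction of $\Fld$ in the second paragraph --- the partition-of-unity periodisation is what makes $\gamma$-invariance, the normalised $\ddd{}{\px}$-component and $\ff{\tfunc}{\Fld}\equiv0$ hold simultaneously --- with the completeness argument the second nontrivial point, where the boundedness hypothesis~\ref{enum:func:T1_bounded} together with the $\gamma$-periodicity of $\tfunc$ are genuinely used; the rest is routine.
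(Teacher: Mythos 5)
Your proof is correct and follows essentially the same strategy as the paper: periodise the field by summing $\gamma$-translates of a compactly supported modification, then check directly that the $\ddd{}{\px}$-component is $1$, that $\gamma$-invariance holds, and that $\ff{\tfunc}{\cdot}\equiv0$ is preserved, before deriving completeness from~\ref{enum:func:T1_bounded} and $\gamma$-periodicity. The only organisational difference is that you normalise the $\ddd{}{\px}$-component to $1$ \emph{first} and then use a genuine telescoping partition of unity $\lambda(\px)=\rho(\px)-\rho(\px-1)$, which makes the resulting $\ddd{}{\px}$-component automatically $1$, whereas the paper multiplies by a cutoff $\nu$ that is $1$ on $[0;1]$, sums, and only then divides by the resulting coefficient $b_0$; this saves you one normalisation step but is otherwise the same device.
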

\begin{proof}
Fix any $\Cinfty$ function $\nu\colon\bR\to[0;1]$ such that $\nu=1$ on $[0;1]$ and $\nu=0$ on $(-\infty;-0.2] \cup [1.2;+\infty)$, and define the following vector field $\Fld' = \nu\Fld$ is supported in $[-0.2;1.2]\times\bR^{n}$, coincides with $\Fld$ on $[0;1]\times\bR^{n}$, and, in particular, is non-zero on the latter set.

Recall that by definition $\gamma^{*}\Fld = \tang{\gamma}\circ\Fld\circ\gamma^{-1}$.
Hence $\gamma^{*}\Fld' = \tang{\gamma}\circ(\nu\Fld)\circ\gamma^{-1} = (\nu\circ\gamma^{-1})\gamma^{*}\Fld$.
Then the following formula
\[
   \Fld'' := \sum_{i\in\bZ} (\gamma^{i})^{*}\Fld' =
             \sum_{i\in\bZ} (\nu\circ\gamma^{-i})\cdot (\gamma^{i})^{*}\Fld
\]
defines a vector field on $\bRnR$, since at each $(\px,\pv)\in\bRnR$ only two summands are non-zero.
Moreover, if $a_0$ is the component of $\Fld$ at $\ddd{}{\px}$, then the corresponding component of $\GFld$ at $\ddd{}{\px}$ is
\[
    b_0 = \sum_{i\in\bZ} (\nu\cdot a_0)\circ \gamma^{-i}.
\]
This function is strictly positive, since $a_0$ is so, and thus $\Fld''$ is non-zero everywhere on $\bRnR$.

We claim that the vector field $\GFld = \tfrac{1}{b_0} \Fld''$ satisfies~\ref{enum:func:fld_better}.
Indeed, its component at $\ddd{}{\px}$ is identically $1$.
It also directly follows from the formulas for $\Fld''$ and $b_0$ that $\gamma^{*}\Fld''=\Fld''$ and $b_0\circ\gamma=b_0$.
Hence $\gamma^{*}\GFld = \tfrac{1}{b_0\circ\gamma^{-1}} \gamma^{*} \Fld'' = \tfrac{1}{b_0} \Fld''=\GFld$.
To check that $\ff{\tfunc}{\GFld}\equiv 0$, recall that $\ff{\tfunc}{\Fld} \circ \gamma^{-1} = \ff{\tfunc\circ\gamma^{-1}}{(\gamma^{*}\Fld)}$.
Since $\tfunc\circ\gamma=\tfunc$, we obtain that for each $i\in\bZ$
\[
    \ff{\tfunc}{((\gamma^{i})^{*}\Fld')} =
    \ff{\tfunc\circ\gamma^{-i}}{((\gamma^{i})^{*}\Fld')} =
    \ff{\tfunc}{\Fld'}\circ\gamma^{-i}=
    \ff{\tfunc}{\nu\Fld}\circ\gamma^{-i}=
    (\nu\circ\gamma^{-i}) \cdot \ff{\tfunc}{\Fld} = 0.
\]
Hence
\[
    \ff{\tfunc}{\GFld} =
    \tfrac{1}{b_0} \ff{\tfunc}{\Fld''} =
    \tfrac{1}{b_0} \sum_{i\in\bZ} \ff{\tfunc}{((\gamma^{i})^{*}\Fld')} = 0.
\]

Thus assume that $\Fld$ satisfies~\ref{enum:func:fld_better}.
Then property $\gamma^{*}\Fld=\Fld$ together with condition~\ref{enum:func:T1_bounded} imply that $\Fld$ is bounded on $\tRoo$, while $\ff{\tfunc}{\Fld}=0$ implies that $\Fld$ is tangent to $\partial\tRoo=\tfunc^{-1}(1)$.
Hence $\Fld$ generates a global $\Cinfty$ flow $\Flow\colon\tRoo\times\bR\to\tRoo$.
Now, conditions~\ref{enum:flow:pF_shift}-\ref{enum:flow:f_Ft__f} are just reformulation of the property~\ref{enum:func:fld_better} in terms of $\Flow$.
\end{proof}

\subsection{Proof of Theorem~\ref{th:DiffFixNFQS}}\label{sect:proof:th:DiffFixNFQS}
Denote $\tQman = \vbq^{-1}(\Qman)$, and $\tregNbh{t} = \vbq^{-1}(\regNbh{t})$ for $t\geq0$.
Fix any lifting $\teta\colon[0;1]\to\bRnR$ of $\eta$, so $\eta = \vbp\circ\teta$ and $\teta(0)\in\tQman$.
Then for each $\dif\in\DiffFixFQ$ there exists a unique $\Cinfty$ lifting map $\tdif\colon\bRnR\to\bRnR$ such that $\vbq\circ\tdif=\dif\circ\vbq$ and $\tdif(\teta(0))=\teta(0)\in\tQman$.
Thus we have a well-defined map
\[
    \sigma\colon\DiffFixFQ\to\Ci{\bRnR}{\bRnR},
    \qquad
    \sigma(\dif)=\tdif.
\]
Let us check several properties of $\sigma$.

\begin{enumerate}[wide, itemsep=1ex, topsep=1ex]
\item\label{enum:sigma:homo}
\myemph{$\sigma$ is an \myemph{injective homomorphism of monoids} with respect to the composition of maps, that is $\sigma(\gdif\circ\dif)=\sigma(\gdif)\circ\sigma(\dif)$ for all $\gdif,\dif\in\DiffFixFQ$ and $\sigma(\id_{\tRoo})=\id_{\bRnR}$.
In particular, for each $\dif\in\DiffFixFQ$, $\sigma(\dif)$ is invertible in $\Ci{\bRnR}{\bRnR}$, and therefore it is a diffeomorphism.}

Indeed, if $\gdif,\dif\in\DiffFixFQ$, then $\vbq\circ(\tgdif\circ\tdif)=\gdif\circ\vbp\circ\tdif= (\gdif\circ\dif)\circ\vbq$, so $\tgdif\circ\tdif$ is a lifting of $\gdif\circ\dif$.
Moreover, $\tgdif\circ\tdif(\teta(0))=\tgdif(\teta(0))=\teta(0)$.
Now from uniqueness of liftings, we should have that $\sigma(\gdif\circ\dif)=\tgdif\circ\tdif=\sigma(\gdif)\circ\sigma(\dif)$.

Sinilarly, $\id_{\bRnR}$ is a lifting of $\id_{\tRoo}$ such that $\id_{\bRnR}(\teta(0))=\teta(0)$, whence, again by uniqueness of such liftings, $\sigma(\id_{\tRoo})=\id_{\bRnR}$.

\item\label{enum:sigma:teta_1}
\myemph{$\dif\in\DiffFixNFQS$ if and only if $\tdif(\teta(1))=\teta(1)$.}

Suppose $\dif\in\DiffFixNFQS$, so there exits a homotopy $\{\eta_{t}\colon[0;1]\to\tRoo\}_{t\in[0;1]}$ between the paths $\eta$ and $\dif\circ\eta$ relatively their ends.
Then it lifts to a homotopy
\[ \{\teta_{t}\colon[0;1]\to\bRnR\}_{t\in[0;1]}\]
between their liftings starting at $\teta(0)$.
By uniqueness of liftings of paths for the covering map $\vbp$, we must have that $\teta=\teta_0$ and $\tdif\circ\teta=\teta_1$.
Hence the ends of these paths coincide.

Conversely, suppose that $\tdif(\teta(1))=\teta(1)$, so the paths $\teta$ and $\tdif\circ\teta$ have common ends.
Since $\bRnR$ is simply connected, there is a homotopy $\{\teta_{t}\colon[0;1]\to\bRnR\}_{t\in[0;1]}$ between these paths relatively their ends.
Then $\{\vbp\circ\teta_{t}\colon[0;1]\to\tRoo\}_{t\in[0;1]}$ is a homotopy between $\eta$ and $\dif\circ\eta$ relatively their ends.

\item\label{enum:sigma:DiffFixNFQS_group}
\myemph{$\DiffFixNFQS$ is a group}.

Let $\gdif,\dif\in\DiffFixNFQS$.
As $\sigma$ is a homomorphism,
\[
    \sigma(\gdif\circ\dif^{-1})(\teta(0))
        \stackrel{\ref{enum:sigma:homo}}{=}
    \sigma(\gdif)\circ\sigma(\dif)^{-1}(\teta(0))
        =
    \sigma(\gdif)(\teta(0))= \teta(0).
\]
Hence by~\ref{enum:sigma:teta_1}, $\dif\circ\gdif^{-1}\in\DiffFixNFQS$, so $\DiffFixNFQS$ is a group.

\item\label{enum:sigma:r_DiffPR}
\myemph{For each $\dif\in\DiffFixFQ$ we have that $\sigma(\dif)(\bR\times0)=\bR\times0$, $\restr{\sigma(\dif)}{\bR\times0} \in \DiffPR$, and the restriction map
\[
    r\colon \DiffFixFQ \to \DiffPR, \qquad
    r(\dif) = \restr{\sigma(\dif)}{\bR\times0},
\]
is a homomorphism, and $\ker(r) = \DiffFixNFQS$.}

Notice that $\bR\times 0 = \vbq^{-1}(\Circle)$.
Also, since $\Circle$ is a leaf of $\Foliation$, we have that $\dif(\Circle)=\Circle$.
Now, the identity $\vbq\circ\tdif = \dif\circ\vbq$ implies that
\[
    \tdif(\bR\times 0) \subset \vbq^{-1}\bigl( \vbq\circ\tdif(\bR\times0) \bigr) =
    \vbq^{-1}\bigl( \dif\circ\vbq(\bR\times0) \bigr) =
    \vbq^{-1}\bigl( \dif(\Circle) \bigr) =
    \vbq^{-1}(\Circle) =
    \bR\times0.
\]
By the same arguments arguments applied to $\tdif^{-1} = \sigma(\dif^{-1})$ we will get the inverse inclusion $\bR\times0 \subset \tdif(\bR\times 0)$.

Let us check that $\ker(r) = \DiffFixNFQS$.
Notice that for each $\dif\in\DiffFixFQ$, the diffeomorpism $r(\dif)$ is a lifting of $\restr{\dif}{\Circle}$, so it is uniquely determined by the value $r(\dif)(\teta(1))$.

Hence, if $\dif\in\ker(r)$, that is $r(\dif) = \id_{\bR}$, then $\restr{\dif}{\Circle}=\id_{\Circle}$, i.e.\ $\dif\in\DiffFixNFQS$.
Conversely, if $\dif\in\DiffFixNFQS$, then $r(\dif)$ is a unique lifting of $\restr{\dif}{\Circle}=\id_{\Circle}$ fixing $\teta(1)$, and therefore $r(\dif)=\id_{\bR}$, so $\dif\in\ker(r)$.

\item\label{enum:sigma:sect_of_r}
\myemph{The homomorphism $r$ is surjective and admits a section $s\colon\DiffPR\to\DiffFixFQ$, i.e.\ $s$ is a continuous map (being not necessarily a homomorphism) satifying $r\circ s = \id_{\DiffPR}$.
Hence we get a homeomorphism
\[
    \zeta\colon\DiffFixNFQS \times \DiffPR \cong \DiffFixFQ,
    \qquad
    \zeta(\dif,\phi) = s(\phi)\circ\dif.
\]
Moreover, $s$ can be choosen so that $s(\id_{\bR})=\id_{\tRoo}$, whence
\[ \zeta(\DiffFixNFQS\times\id_{\bR}) = \DiffFixNFQS.\]
Since $\DiffPR$ is contractible, this implies that $\DiffFixNFQS$ is a strong deformation retract of $\DiffFixFQ$.}

\begin{enumerate}[label={\rm5.\arabic*)}, wide, topsep=1ex, itemsep=1ex]
\item\label{enum:th:5.1_simple_case}
First consider a particular case in which $s$ can be constructed very easy.
Suppose $\tfunc(\px,\pv) =\tgfunc(\pv)$ as in Example~\ref{enum:DiffFixNFQS:exmp:1}.
Then each $\phi\in\DiffPR$ can be extended to a diffeomorphism $\tphi\colon\bRnR\to\bRnR$ by the following formula
\begin{equation}\label{equ:tphi_simple}
\tphi(\px,\pv)  =
\bigl( \mu(\func(\pv))\px + (1-\mu(\func(\pv)))\phi(\px), \pv  \bigr),
\end{equation}
where $\mu$ is the same function as above.
Evidently, $\tphi$ commutes with $\alpha$ and $\beta$, and is fixed on $\overline{\tRoo\setminus\tRb} \supset \tQman$.
Hence in either of the cases of $\Eman$, $\tphi$ induces a well-defined diffeomorphism $s(\phi)$ of $\Roo$ fixed on $\overline{\Roo\setminus\Rb} \supset \Qman$.

We claim that $s(\phi)\in\DiffFixFQ$, i.e.\ \myemph{$s(\phi)$ preserves leaves of $\Foliation$}, being by definition path components of level sets of $\func$.
Indeed, let $a\in\Eman$ and $\tilde{a}=(\px,\pv)\in\bRnR$ be any point with $\vbq(\tilde{a})=a$.
Consider the following path
\[
    \theta\colon[0;1]\to\bRnR,
    \qquad
    \theta(t)=\bigl( t\mu(\func(\pv))\px + (1-t\mu(\func(\pv)))\phi(\px), \pv  \bigr).
\]
Then $\theta(0)=\tphi(\tilde{a})$ and $\theta(1)=\tilde{a}$.
Since $\tphi$ is a lifting of $s(\phi)$, we also have that
\[
\vbq\circ\tphi(\tilde{a}) = s(\phi)\circ \vbq(\tilde{a}) = s(\phi)(a),
\]
whence $\vbq\circ\theta\colon[0;1]\to\Eman$ is a path between $s(\phi)(a)$ and $a$.
Moreover,
\[
    \func\circ(\vbq\circ\theta)(t) = \tfunc\circ\theta(t)=\tgfunc(\pv),  \ t\in[0;1],
\]
that is $\func$ is constant along $\vbq\circ\theta$ and thus $a$ and $s(\phi(a))$ belong to the same path component of the level set of $\func$.

Thus $s(\phi)\in\DiffFixFQ$ and the correspondence $\phi\mapsto s(\phi)$ is the desired section of $r$.
The relation $s(\id_{\id_{\bR}})=\id_{\Roo}$ is obvious.

\item\label{enum:th:5.2_general_case}
Consider now the general case.
By Lemma~\ref{lm:improve_vf_F} we can assume that $\Fld$ satisfies condition~\ref{enum:func:fld_better}, and therefore it generates a flow $\Flow\colon\tRoo\times\bR\to\tRoo$ satisfying conditions~\ref{enum:flow:pF_shift}-\ref{enum:flow:f_Ft__f} of that lemma.

Let also $\phi\in\DiffPR$.
Since $\bR\times0 = \tfunc^{-1}(0)$ is a non-closed orbit of $\Flow$, for each $\px\in\bR$ there exists a unique number $\uphi(\px)\in\bR$ such that
\begin{equation}\label{equ:phi_shift_u}
    \phi(\px,0) = \Flow(\px,0,\uphi(\px)), \qquad (\px,0)\in\bR\times 0.
\end{equation}
Due to \cite[Eq.~(10)]{Maksymenko:TA:2003}, since $\phi$ is $\Cinfty$, $\uphi$ will be $\Cinfty$ as well.
Define another $\Cinfty$ function $\tuphi\colon\tRoo\to\bR$ by
\[ \tuphi(\px,\pv) = (1-\mu(\tfunc(\px,\pv)))\cdot\uphi(\px)\]
which can also be written as $\tuphi = (1-\mu\circ\tfunc)\cdot(\uphi\circ\hvbp)$.
Let also $\tphi = \Flow_{\tuphi}\colon\tRoo\to\tRoo$ be the shift along orbit of $\Flow$ by the function $\tuphi$, i.e.
\begin{equation}\label{equ:tphi_general}
    \tphi(\px,\pv) := \Flow_{\tuphi}(\px,\pv) = \Flow\bigl(\px,\pv, \tuphi(\px,\pv)\bigr)
\end{equation}
We will show that \myemph{$\tphi$ is a diffeomorphism which commutes with $\gamma$ and induces a certain diffeomorphism $s(\phi)\in\DiffFixFQ$ of $\Eman$}.
Then the correspondence $\phi\mapsto s(\phi)$ will be the required section of $r$.

\begin{enumerate}[wide, label={\rm(\roman*)}, itemsep=1ex]
\item\label{enum:tphi_emb}
\myemph{$\tphi=\Flow_{\tuphi}\colon\tRoo\to\tRoo$ is an embedding fixes on $\overline{\tRoo\setminus\tRb}$.}

Since $\mu=1$ on $\overline{\tRoo\setminus\tRb}$, $\tuphi=0$ on that set, whence $\tphi$ is fixed there.

We will show that show that $\ff{\tuphi}{\Fld}>-1$.
Then, by Lemma~\ref{lm:shift:local_diff}, $\tphi$ will be a local diffeomorphism, and for every orbit $\omega \subset \tRoo$ of $\Flow$, the restriction $\restr{\tphi}{\omega}\colon\omega \to \omega$ will be injective.
This will finally imply that $\tphi$ is an embedding.

Since $\bR\times0$ is an orbit of $\Flow$, it follows from Lemma~\ref{lm:improve_vf_F}\ref{enum:flow:pF_shift}, that for every $\px\in\bR$,
$\Flow(\px,0,t) = \hvbp\circ\Flow(\px,0,t) = (\px+t, 0)$.
Hence~\eqref{equ:phi_shift_u} implies that $\uphi(\px) = \phi(\px)-\px$, similarly to the previous case~\ref{enum:th:5.1_simple_case}.
Moreover, as $\phi\colon\bR\times0\to\bR\times0$ is a preserving orientation diffeomorphism of that orbit, $\phi' > 0$, whence
\[
    \ff{\uphi}{\restr{\Fld}{\bR\times0}} \equiv \uphi'_{\px} = \phi'_{\px} - 1 > -1,
\]
which is also guaranteed by Lemma~\ref{lm:shift:local_diff}.

Further, we have that $(\uphi\circ\hvbp)'_{\pvi{i}} = 0$ for all $i=1,\ldots,n$, whence
\begin{align*}
\ff{\uphi\circ\hvbp}{\Fld}
&=
    (\uphi\circ\hvbp)'_{\px} +
    a_1 \cdot (\uphi\circ\hvbp)'_{\pvi{1}} +
    \cdots +
    a_n \cdot (\uphi\circ\hvbp)'_{\pvi{n}} =
    \uphi'_{\px}(\px) > -1.
\end{align*}

Therefore
\begin{align*}
    \ff{\tuphi}{\Fld}&=
    \ff{(1-\mu\circ\tfunc)\cdot(\uphi\circ\hvbp)}{\Fld} = \\
    &=
    \ff{1-\mu\circ\tfunc}{\Fld} \cdot (\uphi\circ\hvbp) +
    (1-\mu\circ\tfunc) \cdot \ff{\uphi\circ\hvbp}{\Fld} \\
    &=
    - \mu'(\tfunc) \cdot \underbrace{\ff{\tfunc}{\Fld}}_{=0} \cdot (\uphi\circ\hvbp) +
    (1-\mu\circ\tfunc) \cdot \uphi'_{\px} > -1,
\end{align*}
where the latter inequality holds since $\uphi'_{\px}>-1$ and $\mu$ takes values in $[0;1]$.

\item\label{enum:tphi_gamma__gamma_tphi}
Let us show that $\tphi\circ\gamma=\gamma\circ\tphi$.

Since $\phi(\px+1)=\phi(\px)+1$, it follow that
\[
    \uphi\circ\gamma(\px)=\uphi(\px+1) = \phi(\px+1) - \px + 1 = \phi(\px) - \px  = \uphi(\px)
\]
Moreover, as $\tfunc\circ\gamma=\tfunc$ and $\hvbp\circ\gamma=\hvbp$, we see that
\[
    \tuphi\circ\gamma =
    (1-\mu\circ\tfunc\circ\gamma)\cdot(\uphi\circ\hvbp\circ\gamma) =
    (1-\mu\circ\tfunc)\cdot(\uphi\circ\hvbp) = \tuphi.
\]
Hence,
\begin{align*}
    \tphi\circ\gamma(\px,\pv) &=
    \Flow\bigl(\gamma(\px,\pv), \tuphi\circ\gamma(\px,\pv)\bigr)
    =
    \Flow\bigl(\gamma(\px,\pv), \tuphi(\px,\pv)\bigr)  =
    \Flow_{\tuphi(\px,\pv)}\circ \gamma(\px,\pv)
    \stackrel{\text{Lemma~\ref{lm:improve_vf_F}\ref{enum:flow:Ft_g__g_Ft}}}{=} \\
    &= \gamma\circ  \Flow_{\tuphi(\px,\pv)} =
    \gamma\circ\Flow\bigl(\px,\pv, \tuphi(\px,\pv)\bigr)
     = \gamma\circ\tphi(\px,\pv).
\end{align*}

\item\label{enum:tphi_surjective}
We claim that \myemph{$\tphi$ is surjective}.
For each $i\in\bZ$ denote $\Qman_i = \tRoo\cap (i\times\bR^{n})$.
Then
\begin{enumerate}[leftmargin=10ex, label={\rm(a\arabic*)}]
\item $\Qman_i$ is compact due to~\ref{enum:func:T1_bounded};
\item $\Qman_i = \gamma(\Qman_0)$ for each $i\in\bZ$;
\item $\Qman_i$ intersects each orbit of $\Flow$ at a unique point, since, by the last sentence of Lemma~\ref{lm:improve_vf_F}, each orbit of $\Flow$ projects bijectively onto $\bR\times0$.
\end{enumerate}

Therefore
\begin{enumerate}[leftmargin=10ex, label={\rm(b\arabic*)}]
\item\label{enum:tphi_Qi:compact}
$\tphi(\Qman_i)$ is also compact, and, in particular, $\hvbp(\tphi(\Qman_0)) \subset [a;b]$ for some $a,b\in\bR$;
\item\label{enum:tphi_Qi:ab}
$\tphi(\Qman_i) = \tphi\circ\gamma^{i}(\Qman_0) \stackrel{\ref{enum:tphi_gamma__gamma_tphi}}{=} \gamma^{i}\circ\phi(\Qman_0)$ for each $i\in\bZ$, whence $\hvbp\bigl(\tphi(\Qman_i)\bigr)\subset[a+i,b+i]$;
\item\label{enum:tphi_Qi:uniq_intersect}
$\tphi(\Qman_i)$ also intersects each orbit of $\Flow$ at a unique point, since $\tphi$ leaves invariant each orbit of $\Flow$.
\end{enumerate}

Now let $\py=(\px,\pv)\in\tRoo$ be any point and $\omega$ be its orbit.
Take $i\in\bN$ so large that $b-i < \px < a+i$.
Then, by~\ref{enum:tphi_Qi:ab}, $\hvbp\bigl(\tphi(\Qman_{-i})\bigr) \subset [a-i,b-i]$ and $\hvbp\bigl(\tphi(\Qman_i)\bigr) \subset [a+i,b+i]$.
Hence, due to~\ref{enum:tphi_Qi:uniq_intersect}, there exists a unique point $\py'=(\px',\pv')\in \tphi(\Qman_{-i})\cap\omega$ and a unique point $py''=(\px'',\pv'')\in \tphi(\Qman_{i})\cap\omega$.
Then $\px' \leq b-i < \px < a+i \leq \px''$, and thus the point $\py$ lays on the orbit $\omega$ between the points $\py'$ and $\py''$ being images under $\tphi$ of some points from $\omega$.
Now, by the intermediate value theorem, $\py$ is also an image of some point from $\omega$.

Thus $\tphi$ is surjective, and therefore a diffeomorphism of $\tRoo$ commuting with $\gamma$.
Hence it induces a unique diffeomorphism $s(\phi)$ of $\Roo$.

\item
\myemph{$s(\phi)$ preserves path components of level sets of $\func$}, i.e.\ $s(\phi)\in\DiffFixFQ$.

The proof is literally the same as in the previous case~\ref{enum:th:5.1_simple_case}, in which the path $\theta\colon[0;1]\to\tRoo$ should be given by $\theta(t)=\Flow(\px,\pv, t\tuphi(\px,\pv))$.
We leave the details for the reader.
\end{enumerate}

Thus the correspondence $\phi\mapsto s(\phi)$ is the desired section of $r$.
This completes Theorem~\ref{th:DiffFixNFQS}.
\qed
\end{enumerate}
\end{enumerate}

\begin{subremark}\rm
Applying the construction from~\ref{enum:th:5.2_general_case} to the case~\ref{enum:th:5.1_simple_case} we see that $\Fld=\ddd{}{\px}$ and $\Flow(\px,\pv,t) = (\px+t,\pv)$ for all $\pv\in\bR^{n}$ not only for $\pv=0$.
Then \[ \tuphi(\px,\pv) = (1-\mu(\func(\pv)))\cdot(\phi(\px)-\px),\]
so~\eqref{equ:tphi_simple} can be written as follows:
\begin{align*}
    \tphi(\px,\pv)  &=
    \bigl( \mu(\func(\pv))\px + (1-\mu(\func(\pv)))\phi(\px), \pv  \bigr) = \\
    &=
    \bigl( \px + (1-\mu(\func(\pv)))\cdot(\phi(\px)-\px), \pv  \bigr) =
    \bigl( \px + \tuphi(\px,\pv), \pv  \bigr) =
    \Flow(\px,\pv, \tuphi(\px,\pv)),
\end{align*}
which coincides with~\eqref{equ:tphi_general}.
In other words, the formulas in \ref{enum:th:5.1_simple_case} constitute a particular case of~\ref{enum:th:5.2_general_case}.
\end{subremark}

\section{Proof of Theorem~\ref{th:DFdT_contr}}\label{sect:proof:th:DFdT_contr}
First we will introduce more detailed notations.
Let $\vbp\colon\Circle\times\bC\to\Circle$ be the trivial vector bundle of rank $2$.
Define the following function $\func\colon\Circle\times\bC\to[0;+\infty)$, $\func(\al,\az)=\nrm{\az}^2$, and for each $\arad\in[0;+\infty)$ put
\[
\begin{gathered}
\begin{aligned}
 \RDisk{\arad}   &:= \{\az\in\bC \mid \nrm{\az} \leq \arad\}, \qquad\qquad &
 \RCircle{\arad} &:= \{\az\in\bC \mid \nrm{\az} = \arad\}, \\
 \RTor{\arad}    &:= \func^{-1}([0;t]) = \Circle\times\RDisk{\arad}, &
 \RTD{\arad}     &:= \func^{-1}(t) = \Circle\times\RCircle{\arad},
\end{aligned} \\
\RBd{\arad} := \overline{\XTorus\setminus\RTor{\arad}} = \Circle\times\{\az \mid \arad \leq \nrm{\az} \leq 1\}.
\end{gathered}
\]
Thus $\RTor{0} = \RTD{0} = \Circle\times (0,0)$ is a circle, $\RTor{\arad}$, for $t>0$, is a \myemph{solid torus}, and $\RTD{\arad} = \partial\RTor{\arad}$ is its boundary $2$-torus.

Let $\Foliation = \{\RTD{\arad}\}_{\arad\in[0;1]}$ be the partition (``singluar foliation'') of $\XTorus$ into $2$-tori and the central circle, see Figure~\ref{fig:tor}.
We need to show that \myemph{the group $\DiffFdT$ of diffeomorphisms fixed on $\partial\XTorus$ and leaving each $\RTor{\arad}$ invariant is weakly contractible}.
\begin{figure}[thb]
	\centering
    \includegraphics[width=5cm]{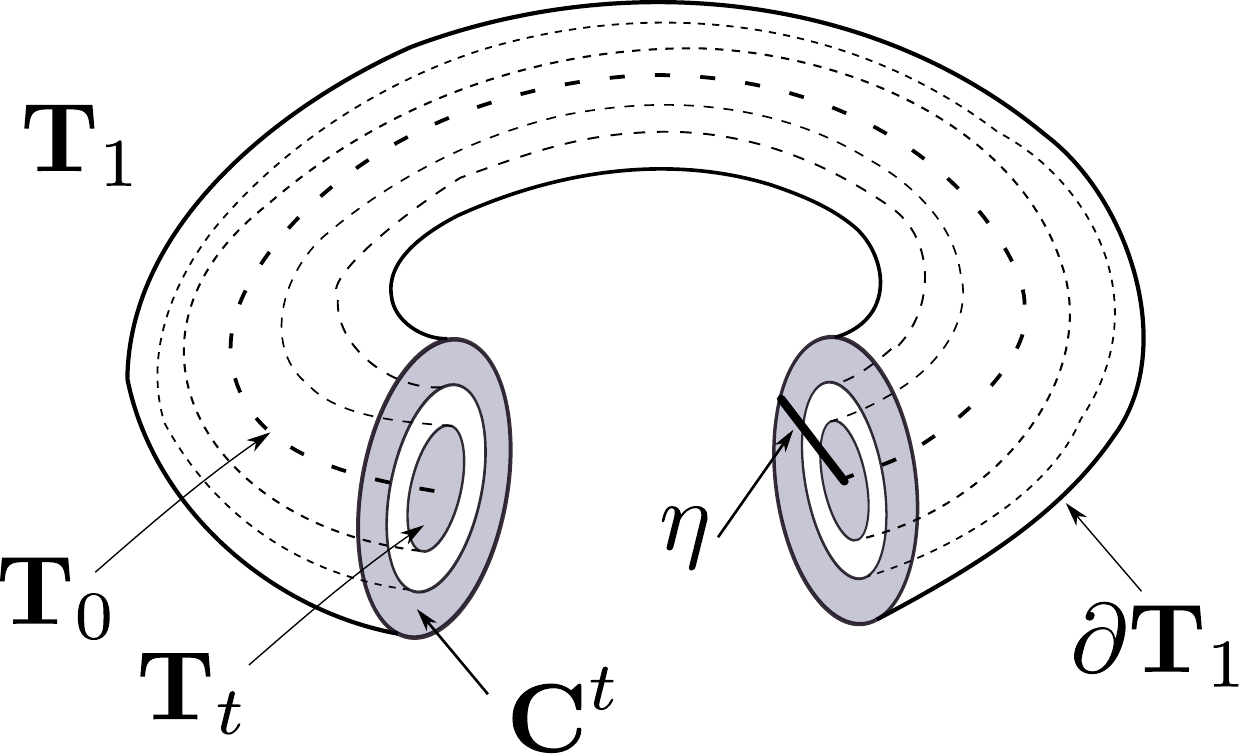}
	\caption{Foliation $\Foliation$}\label{fig:tor}
\end{figure}

Let $\apath\colon[0;1]\to\XTorus$, $\apath(\arad) = (1, \arad)$, be the radius in the disk $1\times\RDisk{1} \subset \Circle\times\RDisk{1}=\XTorus$, here $1$ and $\arad$ are reals regarded as complex numbers.
Denote by
\begin{equation}\label{equ:DiffFHNbdTNbC}
    \DiffFHNbdTNbC
\end{equation}
the group of \FLP\ diffeomorphisms of $\XTorus$ fixed on $\RBd{\bConst} \cup \RTor{\aConst}$ and such that the \myemph{half open paths} $\apath, \dif\circ\apath:(0;1]\to\XTorus\setminus\XC$ are homotopic (as maps into $\XTorus\setminus\XC$) relatively $(0;\aConst]\cup[\bConst;1]$.

Consider now the following subgroups of $\DiffFdT$:
\begin{multline*}
    \DiffFHNbdTNbC  \ \xmonoArrow{\,(4)\,} \
    \DiffFHLNbdTC   \ \xmonoArrow{\,(3)\,} \   \\ \xmonoArrow{~~~} \
    \DiffFHNbdTC    \ \xmonoArrow{\,(2)\,} \
    \DiffFNbdT      \ \xmonoArrow{\,(1)\,} \
    \DiffFdT.
\end{multline*}
Their notations (see Section~\ref{sect:diff_groups}) might look rather complicated, but they are very explicit and we will discuss each of these groups and the corresponding inclusion just below.
For the convenience we will denote them respectively by $\ggi{4}, \ggi{3}, \ggi{2}, \ggi{1}, \ggi{0}$, so that $\ggi{i+1}\subset\ggi{i}$ for $i=0,1,2,3$.
Our plan is to show that
\begin{itemize}
   \item each of the these inclusions $\ggi{i+1}\subset\ggi{i}$, $i=0,1,2,3$, is a homotopy equivalence (we will even present explicit formulas for a deformation of $\ggi{i}$ into $\ggi{i+1}$), and
   \item the smallest group $\ggi{4}:=\DiffFHNbdTNbC$, see~\eqref{equ:DiffFHNbdTNbC}, is weakly contractible.
\end{itemize}
This will imply a weak contractibility of $\ggi{0}:=\DiffFdT$.
In fact, all those statements, except for the inclusion (4), are particular cases of results established in previous sections.

\begin{figure}[htbp!]
	\centering
    \includegraphics[height=5cm]{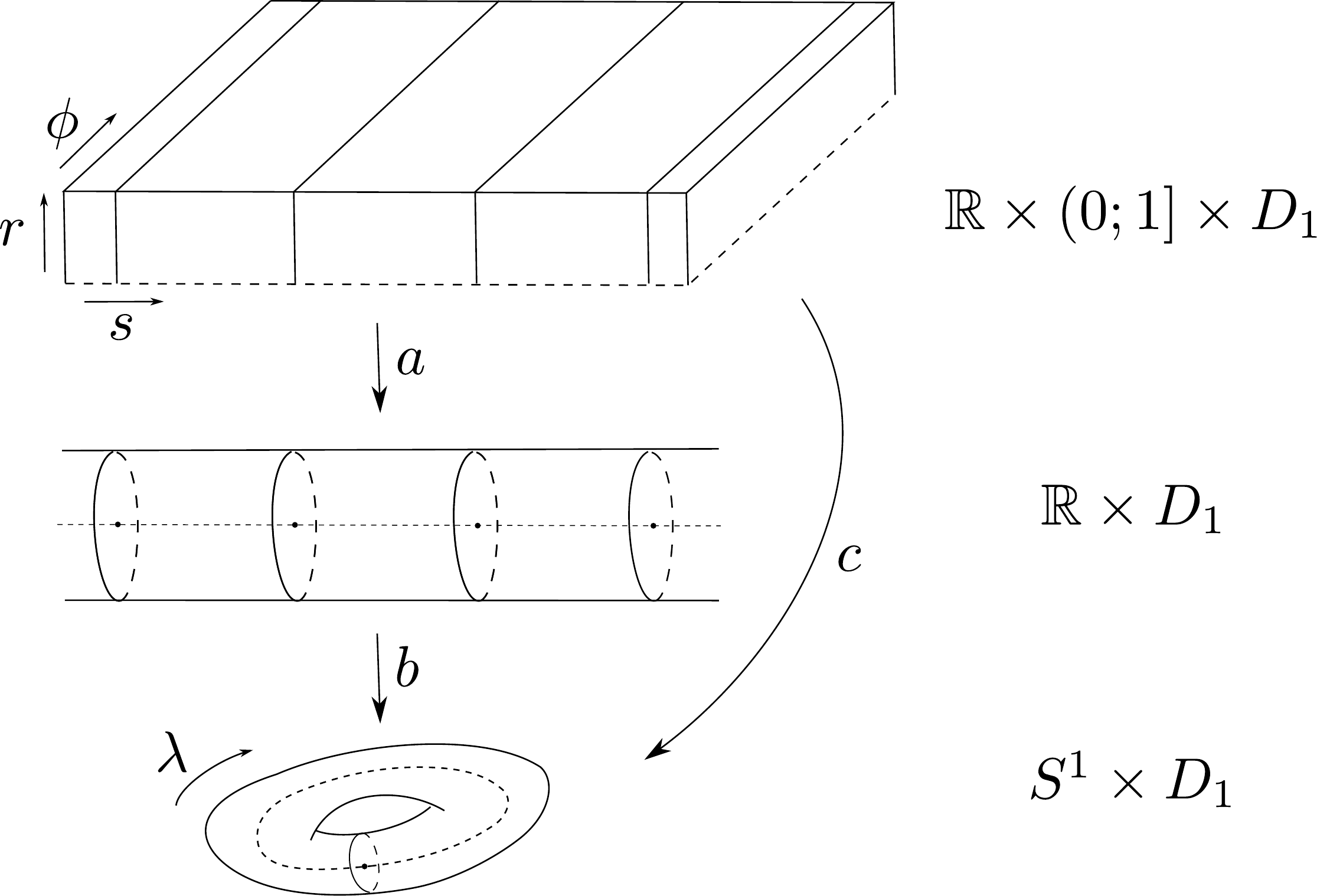}
	\caption{Covering spaces}\label{fig:covering}
\end{figure}

For the proof consider the following two universal covering maps, see Figure~\ref{fig:covering}:
\begin{align*}
\vba&\colon \bR\times(0;1]\times\bR \to \bR\times(\RDisk{1}\setminus0),  & & \vba(\as,\arad,\aphi) = (\as, \arad e^{2\pi i \aphi}), \\
\vbb&\colon \bR\times\RDisk{1} \to\XTorus,                               & & \vbb(\as,\az)         = (e^{2\pi i \as}, \az), \\
\vbc=\vbb\circ\vba&\colon \bR\times(0;1]\times\bR\to\XTorus\setminus\XC, & & \vbc(\as,\arad,\aphi) = (e^{2\pi i \as}, \arad e^{2\pi i \aphi}).
\end{align*}

Let $\dif=(\ACircle,\ADisk)\colon \XTorus\to\XTorus$ be a diffeomorphism belonging to $\ggi{0}=\DiffFdT$, so $\dif$ is fixed on $\partial\XTorus$ and leaves invariant each leaf of $\Foliation$.
In particular, $\dif(\XC)=\XC$.
Then $\dif$ lifts to a unique diffeomorphism
\[ \ldif=(\BRline,\BDisk)\colon\bR\times\RDisk{1}\to\bR\times\RDisk{1}\]
fixed on the boundary $\partial\bR\times\RDisk{1} = \bR\times\Circle$ and leaving invariant the tubes $\bR\times\RCircle{\arad}$, $\arad\in[0;1]$.
In particular, $\ldif$ also leaves invariant $\bR\times0$ and its complement $\bR\times(\RDisk{1}\setminus0)$.
Hence, the restriction $\restr{\ldif}{\bR\times(\RDisk{1}\setminus0)}$ lifts in turn to a unique diffeomorphism
\[
    \lldif=(\CRline, \CRadius, \CArg)\colon \bR\times(0;1]\times\bR \to  \bR\times(0;1]\times\bR
\]
fixed on the corresponding boundary $\bR\times 1 \times\bR = \partial(\bR\times(0;1]\times\bR)$ and leaving invariant the planes $\bR\times\arad\times\bR$, $\arad\in(0;1]$.

In terms of the coordinate functions the property of preserving leaves for $\dif$, $\ldif$, and $\lldif$ can be expressed as follows:
\begin{align*}
 & \nrm{\ADisk(\al,\az)} = \nrm{\BDisk(\as,\az)} = \nrm{\az}, &
 & \CRadius(\as,\arad,\aphi)\equiv \arad.
\end{align*}
Moreover, another properties of being liftings, i.e.\ $\dif\circ\vbb = \vbb\circ\ldif$ and $\ldif\circ\vba = \vba\circ\lldif$, can be rewritten as follows:
\begin{align}
    \dif(e^{2\pi i \as},\az)  &= \bigl( e^{2\pi i \BRline(\as,\az)}, \, \BDisk(\as,\az) \bigr), \\
    \ldif(\as, \arad e^{2\pi i \aphi}) &= \bigl( \CRline(\as,\arad,\aphi), \, r e^{2\pi i \CArg(\as,\arad,\aphi)} \bigr).
\end{align}
Finally notice that $\ldif$ and $\lldif$ commute with the corresponding covering slices, and thus they satisfy the following identities for all $k,l\in\bZ$:
\begin{align}
\label{equ:ldif:comm_with_slices}
\ldif(\as+k,\az)            &= (\BRline(\as,\az)+k, \, \BDisk(\as,\az)), \\
\label{equ:lldif:comm_with_slices}
\lldif(\as+k,\arad,\aphi+l) &= (\CRline(\as,\arad,\aphi)+k, \, r, \, \CArg(\as,\arad,\aphi)+l),
\end{align}

Now we can prove that the inclusions (1)-(4) are homotopy equivalences.
As above, fix a $\Cinfty$ function $\mu:[0;\infty)\to[0;1]$ such that $\mu([0;\aConst])=0$ and $\mu([\bConst;+\infty))=1$.

\begin{itemize}[wide, topsep=1ex, itemsep=1ex]
\item[(1)]
\myemph{Inclusion $\DiffFNbdT\subset\DiffFdT$}.

Thus $\ggi{0}:=\DiffFdT$ is the group of \FLP\ diffeomorphisms fixed on the boundary $\partial\XTorus$, while $\ggi{1}:=\DiffFNbdT$ is its subgroup fixed on the $\Foliation$-saturated collar $\RBd{\bConst}$ of $\partial\XTorus$.
Then by Lemma~\ref{lm:collar}, the inclusion (1) is a homotopy equivalence.

More explicitly, note that the map 
\[ 
    \psi\colon \partial\XTorus\times[t;1] \to \RBd{t} \subset \XTorus, 
    \qquad 
    \psi(\al,\az,\arad)=(\al,\arad\az),
\]
is a \myemph{collar} of $\partial\XTorus$ such that $\psi(\partial\XTorus\times\arad)$, $\arad\in[t;1]$, is a leaf of $\Foliation$.
Then the deformation $\Ghom:\ggi{0}\times[0;1]\to\ggi{0}$ of $\ggi{0}$ into $\ggi{1}$, given by formula~\eqref{equ:def_DFB_DFBC}, can be rewritten as follows:
\[
\Ghom(\dif,t)(\al,\az) =
\begin{cases}
\dif\Bigl(\al,    (t\nrm{\az} + (1-t)\mu(\nrm{\az})) \frac{\az}{\nrm{\az}} \Bigr), & \az\not=0, \\
\dif(\al,\az), & \nrm{\az}\leq \aConst.
\end{cases}
\]

\item[(2)]
\myemph{Inclusion $\DiffFHNbdTC \subset \DiffFNbdT$}.

By definition the group $\ggi{2}:=\DiffFHNbdTC$ consists of \FLP\ diffeomorphisms $\dif$ of $\XTorus$ fixed on the central circle $\XC$ and the collar $\RBd{\bConst}$ of the boundary $\partial\XTorus$, and such that the radius $\apath$ and its image $\dif\circ\apath$ are homotopic relatively to their ends.

Then the inclusion (2) is a homotopy equivalence due to ``longitude unlooping'' Theorem~\ref{th:DiffFixNFQS}.
Notice that $\Foliation$ is exactly the same as in Example~\ref{enum:DiffFixNFQS:exmp:2} for $n=2$, and the case~\ref{enum:th:5.1_simple_case} of the proof of that theorem also corresponds to our foliation $\Foliation$.
We will now write down exact formulas for the deformation of $\ggi{1}$ into $\ggi{2}$.

Let $\dif\in\ggi{1}$ and $\ldif$ be its lifting as above.
Since $\dif$ preserves the central circle $\XC$, we have that $\ldif(\bR\times0)=\bR\times0$, so $\ldif(s,0) = \bigl(\BRline(\as,0), 0)$, $\as\in\bR$.

It is easy to check that $\dif\in\ggi{2}$ iff $\BRline(\as)=\as$ for all $\as\in\bR$, i.e.\ $\ldif$ is fixed on $\bR\times 0$, see~\ref{enum:sigma:r_DiffPR} in Section~\ref{sect:proof:th:DiffFixNFQS} of the proof of Theorem~\ref{th:DiffFixNFQS}.

Moreover, due to~\eqref{equ:ldif:comm_with_slices}, $\BRline(\as+1,0)=\BRline(\as,0)+1$.
In particular, the function $\lufunc:\bR\to\bR$, given by $\lufunc(\as) = \BRline(\as,0)-s$, is $1$-periodic, and thus induces a well-defined $\Cinfty$ function $\ufunc:\Circle\to\bR$ such that $\lufunc(\as) = \ufunc(e^{2\pi i \as})$, $\as\in\bR$, and $\dif(\al,0) = (e^{2\pi i \ufunc(\al)},0)$, $\al\in\Circle$.

Define the following isotopy $\lift{\Gamma}:(\bR\times\RDisk{1})\times[0;1]\to\bR\times\RDisk{1}$ by
\[
    \lift{\Gamma}(\as,\az, t) = \bigl(\as + t(1-\mu(\nrm{\az})) \lufunc(\as), \, \az \bigr).
\]
Then $\lift{\Gamma}_0 = \id_{\bR\times\RDisk{1}}$, each $\lift{\Gamma}_t$ is fixed on $\bR\times\{\az\mid \nrm{\az}\in[\bConst;1]\} = \vbb^{-1}(\RBd{\bConst})$ and preserves the tubes $\bR\times\RCircle{\arad}$, $\arad\in[0;1]$, and $\lift{\Gamma}_1(\as,0) = \ldif(\as,0)$ for all $\as\in\bR$.
Moreover, $\lift{\Gamma}_t$ commutes with the transformation $(\as+1,\az) \mapsto(\as+1,\az)$, and therefore induces an isotopy $\Gamma:\XTorus\times[0;1]\to\XTorus$ such that $\Gamma_t\circ\vbb = \vbb\circ\lift{\Gamma}_t$.
In particular, $\Gamma_0 = \id_{\XTorus}$, each $\Gamma_t$ is fixed on $\RBd{\bConst}$ and preserves the leaves $\Circle\times\RCircle{\arad}=\RTD{\arad}$, $\arad\in[0;1]$ of $\Foliation$, and $\Gamma_1(\al,0) = \dif(\al,0)$ for all $\al\in\Circle$.
Now the deformation $\Ghom:\ggi{1}\times[0;1]\to\ggi{1}$ of $\ggi{1}$ into $\ggi{2}$ can be given by the following formula:
\begin{equation}\label{equ:unloop_paralels}
\Ghom(\dif,t) = (\Gamma_{t})^{-1}\circ\dif.
\end{equation}

\item[(3)]
\myemph{Inclusion $\DiffFHLNbdTC \subset \DiffFHNbdTC$}.

By definition, $\ggi{3}:=\DiffFHLNbdTC$ is the subgroup of $\ggi{2}$ consisting of diffeomorphisms $\dif$ coinciding with the vector bundle morphism $\tfib{\dif}:\Circle\times\bC\to\Circle\times\bC$ on $\RTor{\aConst}$.
In that case, since $\tfib{\dif}$ must preserve leaves of the foliation $\Foliation$, it follows that each disk $\al\times\RDisk{\aConst}$, $\al\in\Circle$, is invariant under $\dif$ and the restriction of $\dif$ to $\al\times\RDisk{\aConst}$ is a rotation.

Now the statement that the inclusion (3) is a homotopy equivalence is given by Linearization Theorem~\ref{th:linearization_simpler}\ref{enum:linsmp:Gext_glogal}.
More precisely, define the following $\Cinfty$ function
\[
    \psi\colon[0;1]\times\bR^2\to[0;1],
    \qquad
    \psi(t,\az) = t + (1-t)\mu(\nrm{\az}).
\]
Then the deformation $\Ghom:\ggi{2}\times[0;1]\to\ggi{2}$ of $\ggi{2}$ into $\ggi{3}$ can be given by formula~\eqref{equ:G_homotopy} which in our case can be rewritten as follows:
\[
\Ghom(\dif,t)(\al,\az) =
\begin{cases}
    \tfib{\dif}(\al,\az), & \psi(\al,\az)=0, \ \text{i.e.\ $t=0$ and $\nrm{\az} \leq \aConst$}, \\[1mm]
    \left(
        \ACircle\bigl(\al, \psi(t,\az)\bigr), \
        \frac{\ADisk(\al, \psi(t,\az)\az}{\psi(\al,\az)}
    \right), & \text{otherwise}.
\end{cases}
\]

\item[(4)]
\myemph{Inclusion $\DiffFHNbdTNbC \subset \DiffFHLNbdTC$}.

Let $\dif\in\ggi{3}:=\DiffFHLNbdTC$ and $\ldif$ and $\lldif$ be its liftings as above.
Then $\dif$ preserves each disk $\al\times\RDisk{\aConst}$, $\al\in\Circle$, and induces a rotation of that disk.
Moreover, we know from (2) that $\ldif$ is fixed on $\bR\times0$, whence $\ldif$ preserves each disk $\as\times\RDisk{\aConst}$, $\as\in\bR$, and also induces rotation on it.
Therefore, $\lldif$ preserves the strips $\as\times(0;\aConst]\times\bR$ and induces on each of them a constant shift of the last coordinate $\aphi$.
In other words,
\[
    \lldif(\as,\arad,\aphi) = (\as,\arad,\aphi + \llufunc(\as)),
\]
where $\llufunc:\bR\to\bR$ is a $\Cinfty$ function, which is also $1$-periodic due to~\eqref{equ:lldif:comm_with_slices}, i.e.\ $\llufunc(\aphi+1) = \llufunc(\aphi)$, and therefore it induces a $\Cinfty$ function $\ufunc:\Circle\to\bR$ such that $\dif(\al, \az) = \bigl(\al, e^{2\pi i \ufunc(\al)}\az\bigr)$ for $(\al,\az)\in\RTor{\aConst} = \Circle\times\RDisk{\aConst}$.

Out main observation is that $\dif\in\DiffFHNbdTNbC$ if and only if $\llufunc\equiv0$, and thus $\ufunc\equiv 0$.

Define now the following isotopy $\lift{\Gamma}:(\bR\times(0;1]\times\bR)\times[0;1]\to\bR\times(0;1]\times\bR$ by
\[
    \llift{\Gamma}(\as,\arad,\aphi, t) = \bigl(\as , \, \arad, \aphi + t(1-\mu(\nrm{\az})) \llufunc(\as) \bigr).
\]
Then $\llift{\Gamma}_0 = \id_{\bR\times(0;1]\times\bR}$, each $\llift{\Gamma}_t$ is fixed on $\bR\times\{\az\mid \nrm{\az}\in[\bConst;1]\}\times\bR = \vbc^{-1}(\RBd{\bConst})$ and preserves the planes $\bR\times\arad\times\bR$, $\arad\in[0;1]$, and $\llift{\Gamma}_1$ is fixed on $\bR\times(0;\aConst]\times\bR$.

Moreover, it is evident that $\llift{\Gamma}_t$ commutes with the transformations
\[ (\as,\arad,\aphi) \mapsto(\as+k,\arad,\aphi+l), \quad k,l\in\bZ,\]
and therefore it induces an isotopy $\Gamma:(\XTorus\setminus\XC)\times[0;1]\to(\XTorus\setminus\XC)$ fixed on $\XC$ and such that such that $\Gamma_t\circ\vbc = \vbc\circ\llift{\Gamma}_t$.
Note that for each $\al\in\Circle$ the restriction of $\Gamma_t$ to $\al\times\RDisk{\aConst}\setminus0$ is a rotation, whence $\Gamma$ extends to an isotopy $\Gamma:\XTorus\times[0;1]\to\XTorus$ fixed on $\XC$.

Thus, $\Gamma_0 = \id_{\XTorus}$, each $\Gamma_t$ is fixed on $\RBd{\bConst}$ and preserves the leaves $\Circle\times\RCircle{\arad}=\RTD{\arad}$, $\arad\in[0;1]$ of $\Foliation$, and $\Gamma_1(\al,\az) = \dif(\al,\az)$ for all $(\al,\az)\in\RTor{\aConst}$.
Now the deformation $\Ghom:\ggi{3}\times[0;1]\to\ggi{3}$ of $\ggi{3}$ into $\ggi{4}$ can be given by the following formula similar to~\eqref{equ:unloop_paralels}:
\begin{equation}\label{equ:unloop_meridians}
    \Ghom(\dif,t) = (\Gamma_{t})^{-1}\circ\dif.
\end{equation}
The difference with~\eqref{equ:unloop_paralels} is that in the case (2), $\Gamma$ ``unloops'' $\dif$ along parallels, while now, in~\eqref{equ:unloop_meridians}, $\Gamma$ ``unloops'' $\dif$ along meridians.

\item[(5)]
Let us show finally that \myemph{the group $\ggi{4}:=\DiffFHNbdTNbC$ is weakly homotopy equivalent to the point, i.e. it is path connected and all its homotopy groups vanish}.

Let $\GFoliation = \{\partial\XTorus\times t\}_{t\in[0;1]}$ be the foliation on the direct product $\partial\XTorus\times[0;1]$ into parallel $2$-tori, and $\Jman=[0;\aConst]\cup[\bConst;1]$.
Then there exists a natural isomorphism of topological groups
\[ \sigma:\DiffGJT \cong \DiffFNbdTNbC. \]
Indeed, similarly to (1), we have the following diffeomorphism $\psi\colon \partial\XTorus\times(0;1] \to \XTorus\setminus\XC$, $\psi(\al,\az,\arad)=(\al,\arad\az)$, sending leaves of $\GFoliation$ onto leaves of $\Foliation$.
Then one can define $\sigma(\dif): \XTorus\to\XTorus$ by
\[
    \sigma(\dif)(\al,\az,\arad) =
    \begin{cases}
        \psi\circ\dif\circ\psi^{-1}(\al,\az,\arad), & t>0,\\
        (\al,0), & \az=0.
    \end{cases}
\]

Further, as noticed in Section~\ref{sect:product_fol}, every $\dif\in\DiffGJT$ can be regarded as an isotopy of $T^2$ which starts and finishes at $\id_{T^2}$.
This gives an inclusion
\[ j:\DiffGJT \to \Omega(\DiffId(T^2),\id_{T^2})\]
into the loop space at $\id_{T^2}$ of the identity path component $\DiffId(T^2)$ of the group of diffeomorphism of the torus $T^2$.
Moreover, by Lemma~\ref{lm:loop_spaces}, $j$ is a weak homotopy equivalence.
Hence the inclusion $j\circ\sigma:\DiffFNbdTNbC \to \Omega(\DiffId(T^2),\id_{T^2})$ is a weak homotopy equivalence as well.

This also implies that for every path component $\mathcal{X}$ of $\Omega(\DiffId(T^2),\id_{T^2})$ the restriction
\[ j\circ\sigma: (j\circ\sigma)^{-1}(\mathcal{X}) \to \mathcal{X} \]
is also a weak homotopy equivalence.
Then, by Corolary~\ref{cor:null_homotopic_loops}, $j\circ\sigma$ maps $\DiffFNbdTNbC$ onto the path component $\mathcal{X}_0$ of $\Omega(\DiffId(T^2),\id_{T^2})$ consisting of null-homotopic loops.

It remains to note, e.g.~\cite{EarleEells:JGD:1969, Gramain:ASENS:1973}, see also Lemma~\ref{lm:hom_type_DiffT2} below, that
\[
    \pi_k \mathcal{X}_0 = \pi_{k} \Omega(\DiffId(T^2),\id_{T^2}) \cong \pi_{k+1}(T^2) = 0,
    \quad k\geq0,
\]
Hence $\mathcal{X}_0$ and therefore $\DiffFNbdTNbC$ are weakly contractible.
\end{itemize}
Theorem~\ref{th:DFdT_contr} is completed.

\section{Diffeomorphisms of the solid torus}\label{sect:diff_of_T2}
In this section we recall the structure of the homotopy types of the groups $\Diff(\MTor)$, $\Diff(\ATor)$, $\Diff(\ATor,\MTor)$, and, in particular, show that the homotopy type of $\Diff(\ATor)$ can be computed from the contractibility of $\Diff(\ATor,\MTor)$, see Lemmas~\ref{lm:hom_type_DiffT2} and~\ref{lm:DiffFixTdT_contr}.

Then we pass to the case of $\AFoliation$-leaf preserving diffeomorphism, and also show that the homotopy type of $\Diff(\AFoliation)$ can be deduced by the similar arguments from contractibility of $\Diff(\AFoliation,\MTor)$.
In particular, this allows to obtain Theorem~\ref{th:DFdT_full_variant} from Theorem~\ref{th:DFdT_contr}, see Section~\ref{sect:proof:th:DFdT_full_variant}.

The exposition below is well-known and can be found in different sources with different variations, e.g.~\cite{Reidemeister:AMSUH:1935, Bonahon:Top:1983, KalliongisMiller:KMJ:2002}.
We will try to make it elementary, short, and self-contained as much as possible, briefly repeating known facts and arguments.
This approach allows to distinguish the principal technical parts which are new in this paper from the known classical arguments.

\subsection{Diffeomorphisms of $2$-torus}
Let $\pi:\bR^2\to\MTor$, $\pi(\pu,\pv) = (e^{2\pi i \pu}, e^{2\pi i \pv})$, be the universal covering map which is also a homomorphism with $\ker(\pi)=\bZ^2$.
For $A = \amatr{r}{p}{s}{q} \in\GL(2,\bZ)$, $b=\avect{k}{l}\in\bR^2$, and $c = (\alpha,\beta)\in\MTor$ define the following diffeomorphisms:
\begin{align}
&\affr{A}{b}:\bR^2\to\bR^2, & & \affr{A}{b}(\px) = b + A\px, \ \px\in\bR^2,     \nonumber \\
\label{equ:afft_Ab}
&\afft{A}{c}:\MTor\to\MTor, & & \afft{A}{c}(\al,\az) = \bigl( \alpha \al^{r}\az^{p}, \, \beta \al^{s}\az^{q} \bigr), \ (\al,\az)\in\MTor.
\end{align}
One easily checks that $\pi \circ \affr{A}{b} = \afft{A}{\pi(b)} \circ \pi$, i.e.\ $\affr{A}{b}$ is a lifting of $\afft{A}{\pi(b)}$.
Moreover, let
\begin{align*}
    &\AffSubgr = \{ \affr{A}{b} \mid A\in \GL(2,\bZ),  \ b\in\bR^2\}, &
    &\tAffSubgr = \{ \afft{A}{c} \mid A\in \GL(2,\bZ), \ c\in\MTor\}
\end{align*}
be the subgroups of $\Diff(\bR^2)$ and $\Diff(\MTor)$ generated by those diffeomorphisms.
Then the correspondence
$\matrt:\AffSubgr \to \tAffSubgr$, $\matrt(\affr{A}{b}) = \afft{A}{\pi(b)}$, is an epimorphism whose kernel is the group $\bZ^2$ of translations by integer vectors.
We also have the following short exact sequence
\[
    \bR^2 \xmonoArrow{ ~ b \ \mapsto \  \affr{E}{b} ~  } \AffSubgr \xepiArrow{ ~ \affr{A}{b} \ \mapsto \ A ~ } \GL(2,\bZ),
\]
admitting a section $s: \GL(2,\bZ) \to \AffSubgr$, $s(A)(\px) := \affr{A}{0}(\px) = A\px$.
Since
\begin{equation}\label{equ:AhbAinv}
    s(A)\circ \affr{E}{b} \circ s(A)^{-1}(\px) =
    A \affr{E}{b}(A^{-1}\px) = A (b + A^{-1}\px) = Ab + \px = \affr{E}{Ab},
\end{equation}
for every $A\in \GL(2,\bZ)$, we see that $\AffSubgr \cong \bR^2\rtimes \GL(2,\bZ)$ is a semidirect product corresponding to the natural action of $\GL(2,\bZ)$ on $\bR^2$.
We will identify $\GL(2,\bZ)$ with a subgroup of $\AffSubgr$ via $s$.
Clearly, $\bR^2$ is the identity path component of $\AffSubgr$, and $\pi_0\AffSubgr = \GL(2,\bZ)$.

The image $\matrt(\bR^2)$ of all translations is often called the \myemph{rotation subgroup} of $\Diff(\MTor)$ and is denoted by $\RotSub$.
Since $\ker(\matrt) = \bZ^2\subset \bR^2$, we have that $\RotSub\cong \bR^2/\bZ^2$ is a $2$-torus and it consists of diffeomorphisms
\begin{equation}\label{equ:incl:rotsubgr}
    \afft{A}{(\alpha,\beta)}(\al,\az) = (\alpha\al, \beta\az), \qquad (\alpha,\beta), (\al,\az)\in\MTor.
\end{equation}
More precisely, $\RotSub$ coincides with the group of left shifts of the Lie group $\MTor$.
It follows further that $\tAffSubgr$ is a semidirect $\RotSub \rtimes \GL(2,\bZ)$ corresponding to the induced action of $\GL(2,\bZ)$ on $\RotSub=\bR^2/\bZ^2$, $\RotSub$ is the identity path component of $\tAffSubgr$, and $\pi_0\tAffSubgr \cong \pi_0\AffSubgr = \GL(2,\bZ)$.
One might think of $\tAffSubgr$ as a group of ``affine'' diffeomorphisms of $\MTor$ and also regard $\GL(2,\bZ)$ as a subgroup of $\Diff(\MTor)$.
The following statement is a classical result about the homotopy type of $\Diff(\MTor)$.
\begin{sublemma}[\cite{EarleEells:JGD:1969, Gramain:ASENS:1973}]\label{lm:hom_type_DiffT2}
The inclusion $\RotSub\subset\DiffId(\MTor)$ is a homotopy equivalence.
Moreover, so is the inclusion $\tAffSubgr \subset \Diff(\MTor)$ and, in particular, $\pi_0\Diff(\MTor)\cong \GL(2,\bZ)$.
\end{sublemma}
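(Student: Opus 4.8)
The plan is to reduce everything to the classical statement that the point stabilizer in $\DiffId(\MTor)$ is contractible, and then deduce the assertions about $\pi_0$ and about the inclusion $\tAffSubgr\subset\Diff(\MTor)$ from the semidirect product structure together with the action on homology.

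First I would set up the \emph{evaluation fibration}. Let $e\in\MTor$ be the unit of the Lie group $\MTor$ and consider $\ev\colon\DiffId(\MTor)\to\MTor$, $\ev(\dif)=\dif(e)$. This is a well-known locally trivial fibration (local sections are produced by flows of compactly supported vector fields moving $e$ inside a chart), with fibre the stabilizer $\Stab(e)=\{\dif\in\DiffId(\MTor)\mid\dif(e)=e\}$. Since the rotation subgroup $\RotSub$ (the group of left translations of $\MTor$, see~\eqref{equ:incl:rotsubgr}) acts simply transitively on $\MTor$, the composite $\RotSub\hookrightarrow\DiffId(\MTor)\xrightarrow{\ev}\MTor$ is a diffeomorphism; in particular $\ev$ is onto and $\RotSub$ is a section of it. Hence the inclusion $\RotSub\subset\DiffId(\MTor)$ is a homotopy equivalence \emph{provided} $\Stab(e)$ is weakly contractible. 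That last fact is exactly the substantial classical input of Earle--Eells~\cite{EarleEells:JGD:1969} and Gramain~\cite{Gramain:ASENS:1973}: a diffeomorphism of $\MTor$ fixing $e$ and homotopic to $\id_{\MTor}$ is isotopic to $\id_{\MTor}$ rel $e$, and the space of such diffeomorphisms is contractible.

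Next I would compute $\pi_0$. The action on the first homology gives a continuous, hence locally constant, homomorphism $\theta\colon\Diff(\MTor)\to\GL(H_1(\MTor;\bZ))\cong\GL(2,\bZ)$. It is surjective, since every $A\in\GL(2,\bZ)$ is realized by the affine diffeomorphism $\afft{A}{e}\in\tAffSubgr$, see~\eqref{equ:afft_Ab}; moreover the $\afft{A}{e}$ form a subgroup $G\subset\tAffSubgr$ mapping isomorphically onto $\GL(2,\bZ)$ under $\theta$. The kernel of $\theta$ equals $\DiffId(\MTor)$: if $\theta(\dif)=\id$ then a lift $\tilde{\dif}\colon\bR^2\to\bR^2$ commutes with the deck group $\bZ^2$, so $\tilde{\dif}-\id$ is $\bZ^2$-periodic, whence $\dif$ is homotopic to $\id_{\MTor}$ and then, by the classical fact that homotopic diffeomorphisms of a closed surface are isotopic (again~\cite{EarleEells:JGD:1969, Gramain:ASENS:1973}), isotopic to it. Therefore $\pi_0\Diff(\MTor)\cong\GL(2,\bZ)$ and $\Diff(\MTor)=\bigsqcup_{A}\Diff_A$ with $\Diff_A:=\theta^{-1}(A)=\DiffId(\MTor)\cdot g_A$, where $g_A\in G$ is the unique affine representative with $\theta(g_A)=A$; correspondingly $\tAffSubgr\cap\Diff_A=\RotSub\cdot g_A$ is a single coset of $\RotSub$.

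Finally, for the ``Moreover'' part: right multiplication by $g_A^{-1}$ is a homeomorphism of pairs carrying $(\Diff_A,\tAffSubgr\cap\Diff_A)$ onto $(\DiffId(\MTor),\RotSub)$, and by the first step the inclusion $\RotSub\subset\DiffId(\MTor)$ is a homotopy equivalence; hence so is $\tAffSubgr\cap\Diff_A\subset\Diff_A$ on each component, and therefore $\tAffSubgr\subset\Diff(\MTor)$ is a homotopy equivalence. The main obstacle is the first step, the weak contractibility of $\Stab(e)$: there is no elementary deformation available, because a diffeomorphism isotopic to $\id_{\MTor}$ may have derivative $-I$ at a point (a ``half twist'' supported in a small disk), so the linear part cannot be straightened by convex interpolation of lifts the way it can for $\Diff^{+}(S^1)$ --- this is precisely where the heavier arguments of~\cite{EarleEells:JGD:1969, Gramain:ASENS:1973} are needed, and that is the part we quote rather than reprove.
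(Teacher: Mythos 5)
Your proof is correct and follows essentially the same route as the paper's sketch: both quote Earle--Eells/Gramain for the substantial classical input, compute $\pi_0\Diff(\MTor)$ via the induced action on $H_1(\MTor;\bZ)$, and deduce the ``moreover'' assertion from the identity-component case by translating each component to $\DiffId(\MTor)$ by an affine representative of the isotopy class. You flesh out a few steps (the evaluation fibration reducing the first assertion to contractibility of the point stabilizer, the argument that $\ker\theta=\DiffId(\MTor)$, and the explicit right-translation identification of pairs) that the paper compresses into a commutative diagram, but the underlying arguments coincide.
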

\begin{proof}[Sketch of proof]
The statement that $\RotSub\subset\DiffId(\MTor)$ is a homotopy equivalence is proved in~\cite{EarleEells:JGD:1969, Gramain:ASENS:1973}.
Further notice that each $\dif\in\Diff(\MTor)$ yields an automorphism of the first homology group $H_1(\MTor,\bZ)$ which is isomorphic with $\pi_1 T^2\cong \bZ^2$ being in turn the group of covering translations of the universal cover $\pi:\bR^2\to\MTor$.
Choose an identification $H_1(\MTor,\bZ) = \bZ^2$ so that $[\Circle\times 1] = (1,0)$ and $[1\times\Circle]=(0,1)\in\bZ^2$.
This gives the action homomorphism $\hhom:\Diff(\MTor)\to\Aut(\bZ^2) = \GL(2,\bZ)$.
Let $p:\Diff(\MTor) \to \pi_0\Diff(\MTor)=\Diff(\MTor)/\DiffId(\MTor)$ be the natural projection associating to each $\dif\in\Diff(\MTor)$ its isotopy class.
Since isotopic diffeomorphisms yield the same automorphisms on homologies, there exists a unique homomorphism $\hat{\hhom}:\pi_0\Diff(\MTor)\to\GL(2,\bZ)$ such that $\hhom = \hat{\hhom}\circ p$.
Thus we get the following diagram
\[
\xymatrix@R=1em{
    \bR^2 \ar@{^(->}[d] \ar@{->>}[r]^-{\matrt} & \RotSub \ar@{^(->}[d] \ar@{^(->}[r] & \DiffId(\MTor)  \ar@{^(->}[d] \\
    \AffSubgr \ar@{->>}[r]^-{\matrt} \ar@/^5pt/@{^->>}[d] & \tAffSubgr \ar@{^(->}[r] \ar@{->>}[d]&  \Diff(\MTor) \ar@{->>}[d]^{p}  \ar@/^8pt/[dl]_-   {\hhom} \\
    \GL(2,\bZ) \ar@/^5pt/[u]^{s} \ar@{=}[r] & \GL(2,\bZ) & \pi_0\Diff(\MTor) \ar[l]^{\hat{\hhom}}
}
\]
One easily checks that $\hhom(\afft{A}{b}) = A$ for all $A\in\GL(2,\bZ)$ and $b\in\bR^2$, which means that $p\circ \matrt\circ s$ is the inverse of $\hat{\hhom}$, and thus $\hat{\hhom}$ is an  isomorphism.
This also implies that the inclusion $\tAffSubgr \subset \Diff(\MTor)$ is a homotopy equivalence as well.
\end{proof}

\subsection{One lemma}
The following simple Lemma~\ref{lm:whe_cond} will be used several times.
For a topological group $\aGrp$ denote by $\aGrp_e$ the path component of $\aGrp$ containing the unit $e$ of $\aGrp$.
Then $\aGrp_e$ is a normal subgroup and the quotient $\aGrp/\aGrp_e$ is naturally identified with $\pi_0(\aGrp,e)$.
Note also that for $k\geq1$ we have natural isomorphisms $\pi_k\aGrp_e \cong \pi_k\aGrp$ induced by the inclusion $\aGrp_e\subset\aGrp$, where the homotopy groups are based at $e$.
\begin{sublemma}\label{lm:whe_cond}
Let $\tJincl:\aGrp\to\bGrp$ and $\tRestr:\bGrp\to\cGrp$ we two homomorphisms of topological groups and $\kGrp = \ker(\tRestr)$.
Suppose the following conditions hold:
\begin{enumerate}[leftmargin=*, label={\rm(W\arabic*)}]
\item\label{enum:whe:r}
$\tRestr:\bGrp\to\cGrp$ is a locally trivial fibration over its image $\tRestr(\bGrp)$ and the kernel $\kGrp$ (being the fibre of $\tRestr$) is weakly contractible, i.e.\ $\pi_i K = 0$ for all $i\geq0$;
\item\label{enum:whe:rj}
the composition $\tRestr\circ\tJincl:\aGrp_e\to C_e$ is a weak homotopy equivalence and the induced map $\tRestr_0\circ\tJincl_0:\pi_0\aGrp\to\pi_0\cGrp$ is injective;
\item\label{enum:whe:img_r_rj}
either \, $\tRestr_0(\pi_0\bGrp) \subset \tRestr_0\circ\tJincl_0(\pi_0\aGrp)$ \, or \, $\tJincl_0(\pi_0\aGrp) = \pi_0\bGrp$.
\end{enumerate}
Then $\tJincl:\aGrp\to\bGrp$ is a weak homotopy equivalence as well.
\end{sublemma}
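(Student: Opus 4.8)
Here is how I would prove Lemma~\ref{lm:whe_cond}. Recall that a homomorphism of topological groups is a weak homotopy equivalence as soon as it induces a bijection on $\pi_0$ and isomorphisms $\pi_k(\cdot,e)\to\pi_k(\cdot,e)$ for all $k\geq 1$: the statement at the remaining base points is automatic because $\tJincl$ is a homomorphism, so left translation by a group element intertwines the homotopy groups based at any point of a path component with those based at the unit. So the plan is to check these two things for $\tJincl$, and the idea is to route everything through $\cGrp$ using $\tRestr$.

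First I would analyse $\tRestr$ by itself using~\ref{enum:whe:r}. Since $\tRestr\colon\bGrp\to\tRestr(\bGrp)$ is a locally trivial fibration with non-empty fibre $\kGrp\ni e$, the subset $\tRestr(\bGrp)\subset\cGrp$ is open; being also a subgroup, it is closed, hence clopen, and therefore contains the whole path component $\cGrp_e$ (any path in $\cGrp$ from $e$ stays in the clopen set $\tRestr(\bGrp)$). As $\kGrp$ is weakly contractible it is connected, so $\kGrp\subset\bGrp_e$, and a routine path-lifting argument gives $\tRestr^{-1}(\cGrp_e)=\bGrp_e$. Consequently the restriction $\tRestr\colon\bGrp_e\to\cGrp_e$ is again a locally trivial fibration with weakly contractible fibre $\kGrp$, and its long exact homotopy sequence, together with $\pi_i\kGrp=0$ for all $i\geq 0$, shows that the induced map $\tRestr_*\colon\pi_k(\bGrp,e)\to\pi_k(\cGrp,e)$ is an isomorphism for every $k\geq 1$, while $\tRestr_0\colon\pi_0\bGrp\to\pi_0\cGrp$ is injective (it is a group homomorphism with trivial kernel, by the end of the same exact sequence).

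Next I would deduce the higher-homotopy part for $\tJincl$. By~\ref{enum:whe:rj} the composition $\tRestr\circ\tJincl\colon\aGrp_e\to\cGrp_e$ is a weak homotopy equivalence, so $(\tRestr\circ\tJincl)_*=\tRestr_*\circ\tJincl_*\colon\pi_k(\aGrp,e)\to\pi_k(\cGrp,e)$ is an isomorphism for all $k\geq 1$; since $\tRestr_*$ is an isomorphism in these degrees by the previous step, so is $\tJincl_*$. For $\pi_0$: injectivity of $\tJincl_0$ is immediate, because $\tRestr_0\circ\tJincl_0$ is injective by~\ref{enum:whe:rj} and hence its first factor $\tJincl_0$ is injective too. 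Surjectivity of $\tJincl_0$ is exactly where~\ref{enum:whe:img_r_rj} enters: under the second alternative $\tJincl_0(\pi_0\aGrp)=\pi_0\bGrp$ there is nothing to prove, while under the first alternative $\tRestr_0(\pi_0\bGrp)\subset\tRestr_0\bigl(\tJincl_0(\pi_0\aGrp)\bigr)$ one cancels the \emph{injective} homomorphism $\tRestr_0$ (for each $[b]\in\pi_0\bGrp$ pick $[a]$ with $\tRestr_0\tJincl_0[a]=\tRestr_0[b]$, then $[b]=\tJincl_0[a]$) to conclude $\pi_0\bGrp\subset\tJincl_0(\pi_0\aGrp)$. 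Thus $\tJincl_0$ is a bijection and, combined with the isomorphisms on the higher homotopy groups, $\tJincl$ is a weak homotopy equivalence.

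I do not expect a serious obstacle here: the only slightly delicate points are the point-set facts in the second paragraph, namely that $\tRestr(\bGrp)$ is clopen (so $\cGrp_e\subset\tRestr(\bGrp)$) and that $\tRestr^{-1}(\cGrp_e)=\bGrp_e$, together with the care about base points in the definition of weak homotopy equivalence; for the Fréchet-manifold topological groups appearing in the paper these are entirely standard, and the substance of the argument is the two-step comparison $\aGrp\to\bGrp\to\cGrp$ organised around the fibration hypothesis~\ref{enum:whe:r}.
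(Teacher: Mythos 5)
Your proof is correct and follows essentially the same route as the paper: analyze $\tRestr$ via the long exact sequence of the fibration to get injectivity on $\pi_0$ and isomorphisms on $\pi_k$ ($k\geq1$), then combine with~\ref{enum:whe:rj} and~\ref{enum:whe:img_r_rj} to transfer these properties to $\tJincl$. The extra point-set remarks about clopenness of $\tRestr(\bGrp)$ and the base-point bookkeeping are fine and implicit in the paper's treatment.
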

\begin{proof}
Notice that we have the following commutative triangles:
\begin{equation}\label{equ:triangles}
\begin{aligned}
&\xymatrix{
    & \pi_0\bGrp \ar@{^(->}[dr]^-{\tRestr_0} \\
    \pi_0\aGrp \ar[ur]^-{\tJincl_0} \ar@{^(->}[rr]^-{(\tRestr\circ\tJincl)_0} && \pi_0\cGrp
}
&\qquad
\xymatrix{
    & \pi_k\bGrp_e \ar[dr]^-{\tRestr_k}_-{\cong} \\
    \pi_k\aGrp_e \ar[ur]^-{\tJincl_k} \ar[rr]^-{(\tRestr\circ\tJincl)_k}_-{\cong} && \pi_k\cGrp_e, & k\geq1.
}
\end{aligned}
\end{equation}
By~\ref{enum:whe:r}, $\tRestr$ is a locally trivial fibration with weakly contractible fiber.
Then from the exact sequence of $\tRestr$, we see that $\tRestr_0$ is injective and $\tRestr_k$ is an isomorphism for all $k\geq1$.
Moreover, by~\ref{enum:whe:rj}, $(\tRestr\circ\tJincl)_0$ is injective, and $(\tRestr\circ\tJincl)_k$ is an isomorphism for all $k\geq1$.

To prove that $\tJincl$ is a weak homotopy equivalence we need to check that $\tJincl_0$ is a bijection and $\tJincl_k$ is an isomorphism for all $k\geq1$.
But since $(\tRestr\circ\tJincl)_k$ and $\tRestr_k$ are isomorphisms for $k\geq1$, so is $\tJincl_k = (\tRestr_k)^{-1} \circ (\tRestr\circ\tJincl)_k$.
Moreover, as $(\tRestr\circ\tJincl)_0$ and $\tRestr_0$ are both injective, it follows that either of assumptions of~\ref{enum:whe:img_r_rj} implies that $\tJincl_0$ is a bijection.
\end{proof}

\subsection{Diffeomorphisms of $\ATor$}
Consider the following subgroup of $\GL(2,\bZ)$:
\begin{equation}\label{equ:group_A}
    \stMapClGr:=\left\{ \amatr{\eps}{0}{m}{\delta} \mid m\in\bZ, \, \eps,\delta\in\{\pm1\} \right\}.
\end{equation}

It is easy to see that $\stMapClGr$ is generated by the following matrices:
\begin{align}\label{equ:dlmt}
    \hDtwist  &= \amatr{1}{0}{1}{1},  &
    \hLambda  &= \amatr{-1}{0}{0}{1}, &
    \hMu      &= \amatr{1}{0}{0}{-1} = -\hLambda, &
    \hTau     &= \amatr{-1}{0}{0}{-1} = -E,
\end{align}
satisfying the following identities:
\begin{align*}
    &\hLambda^2 = \hMu^2 = E, &
    &\hTau = \hLambda\hMu = \hMu\hLambda, &
    &\hLambda\hDtwist\hLambda = \hMu\hDtwist\hMu = \hDtwist^{-1}, &
    &\hTau\hDtwist=\hDtwist\hTau.
\end{align*}
Evidently, $\hDtwist$ has infinite order, and the group generated by $\hDtwist,\hLambda$ has the following presentation: $\langle\hDtwist,\hLambda \mid \hLambda^2=1, \hLambda\hDtwist\hLambda = \hDtwist^{-1} \rangle$.
Hence it is isomorphic to the \myemph{infinite dihedral group} $\bD_{\infty}$ being in turn a unique non-trivial semidirect product $\bZ\rtimes\bZ_2$.
Moreover, $\tau$ commutes with that group, and therefore
$\stMapClGr = \langle \hDtwist, \hLambda \rangle \times  \langle \hTau\rangle \cong \bD_{\infty} \times \bZ_2$.

Note also that $\stMapClGr$ is exactly the subgroup of $\GL(2,\bZ)$ consisting of matrices for which the vector $\avect{0}{1}$ is eigen with eigen value $\pm 1$.

Denote by $\GAffSubgr:=\bR^2\rtimes\stMapClGr$ the subgroup of $\AffSubgr = \bR^2\rtimes\GL(2,\bZ)$ generated by translations $\bR^2$ and $\stMapClGr$.
Then its image $\stAffSubgr:=\matrt(\GAffSubgr) \subset \Diff(\MTor)$ is the corresponding semidirect product $\RotSub \rtimes \stMapClGr$.

It will be convenient to consider also the following maps $j^{1}, j^{-1}:\bC\to\bC$, $j^{1}(\az)=\az$, $j^{-1}(\az)=\bar{\az}$, so $j^{1}$ is the identity, while $j^{-1}$ is the complex conjugation map.
Evidently, $j^{-1}(\az)=\az^{-1}$ if $\nrm{\az}=1$, and $j^{\delta}\circ j^{\delta'} = j^{\delta\delta'}$ for $\delta,\delta'\in\{\pm1\}$.

Now let $c=(\alpha,\beta)\in\RotSub$ and $A=\amatr{\eps}{0}{m}{\delta}\in\stMapClGr$.
Then the diffeomorphism $\afft{A}{c}\in\stAffSubgr$ extends to a diffeomorphism $\affst{A}{c}:\ATor\to\ATor$ by the formula similar to~\eqref{equ:afft_Ab} but now $\az$ is allowed to run over $D^2$:
\begin{equation}\label{equ:hAb_solidTorus}
    \affst{A}{c}(\al,\az) = \bigl( \alpha \al^{\eps}, \, \beta \al^{m} j^{\delta}(\az) \bigr),
    \quad (\al,\az)\in\ATor=\Circle\times D^2,
\end{equation}
This gives an inclusion
\begin{equation}\label{equ:tJincl}
    \tJincl:\stAffSubgr \subset \Diff(\ATor),
    \qquad
    \tJincl(\afft{A}{c})=\affst{A}{c}.
\end{equation}
Let $\tRestr:\Diff(\ATor) \to \Diff(\MTor)$, $\tRestr(\dif)=\restr{\dif}{\MTor}$, be the ``restriction to $\MTor$ map''.
Evidently, $\tRestr$ is a homomorphism and $\ker(\tRestr) = \DiffFix{\ATor}{\MTor}$.
Then we have the following commutative diagram:
\[
\xymatrix@R=1em{
    & \Diff(\ATor) \ar[dr]^-{\tRestr} \\
    \stAffSubgr \ar[ur]^-{\tJincl} \ar[rr]^-{\subset} && \Diff(\MTor)
}
\]
so $\tRestr \circ \tJincl:\stAffSubgr \to \Diff(\MTor)$ is the identity inclusion described in Lemma~\ref{equ:incl:rotsubgr}, and $\tJincl$ can also be regarded as a section of $\tRestr$ defined on $\stAffSubgr$.
The following (another classical) statement describes the homotopy types of $\Diff(\ATor)$ and $\DiffFix{\ATor}{\MTor}$.
\begin{sublemma}\label{lm:DiffFixTdT_contr}
The group $\DiffFix{\ATor}{\MTor}$ is contractible and the inclusion $\tJincl:\RotSub \to \DiffId(\ATor)$ is a homotopy equivalence.
Moreover, the inclusion $\tJincl:\stAffSubgr \subset \Diff(\ATor)$ is also a homotopy equivalence, and in particular, $\pi_0\Diff(\ATor) \cong \stMapClGr$.
\end{sublemma}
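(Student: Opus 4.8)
The plan is to apply the general Lemma~\ref{lm:whe_cond} to the restriction homomorphism $\tRestr\colon\Diff(\ATor)\to\Diff(\MTor)$, $\tRestr(\dif)=\restr{\dif}{\MTor}$, together with the inclusion $\tJincl\colon\stAffSubgr\to\Diff(\ATor)$ from~\eqref{equ:tJincl}; thus I would take $\aGrp=\stAffSubgr$, $\bGrp=\Diff(\ATor)$, $\cGrp=\Diff(\MTor)$, so that $\kGrp=\ker(\tRestr)=\DiffFix{\ATor}{\MTor}$. The first assertion of the lemma, contractibility of $\DiffFix{\ATor}{\MTor}$, is the theorem of Ivanov~\cite{Ivanov:LOMI:1976}, which I would cite directly. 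Restriction to the boundary is a locally trivial fibration onto its image with fibre $\DiffFix{\ATor}{\MTor}$, by the isotopy extension theorem (c.f.~\cite{Lima:CMH:1964}); hence hypothesis~\ref{enum:whe:r} holds.

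The next step is to identify the image $\tRestr(\Diff(\ATor))$. Being the base of a fibration, it is open in $\Diff(\MTor)$; and since, by the isotopy extension theorem, a diffeomorphism of $\MTor$ isotopic to an extendable one is itself extendable, the image is a union of path components of $\Diff(\MTor)$. On the one hand, if $\dif$ extends to $\ATor$ then it carries the meridian $1\times\Circle$ onto a curve bounding an embedded disk in $\ATor$, hence null-homotopic there; since the homology class $\avect{0}{1}$ of the meridian is primitive, this forces the automorphism $\hhom(\dif)$ of $H_1(\MTor;\bZ)\cong\bZ^2$ to fix $\avect{0}{1}$ up to sign, i.e.\ $\hhom(\dif)\in\stMapClGr$. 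On the other hand, every element of $\stAffSubgr$ extends over $\ATor$ by the explicit formula~\eqref{equ:hAb_solidTorus}, and since $\hhom(\afft{A}{c})=A$, Lemma~\ref{lm:hom_type_DiffT2} shows that $\stAffSubgr$ already meets every path component of $\Diff(\MTor)$ lying over $\stMapClGr$. Hence $\tRestr(\Diff(\ATor))$ is precisely that union of components, so $\tRestr_0(\pi_0\Diff(\ATor))=\stMapClGr=\tRestr_0\circ\tJincl_0(\pi_0\stAffSubgr)$, which is the first alternative in~\ref{enum:whe:img_r_rj}.

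Finally I would verify~\ref{enum:whe:rj}: as noted after~\eqref{equ:tJincl}, the composite $\tRestr\circ\tJincl\colon\stAffSubgr\to\Diff(\MTor)$ is the inclusion $\stAffSubgr\subset\tAffSubgr\subset\Diff(\MTor)$, which on identity components is the inclusion $\RotSub\subset\DiffId(\MTor)$ --- a homotopy equivalence by Lemma~\ref{lm:hom_type_DiffT2} --- and which on $\pi_0$ is the injection $\stMapClGr\hookrightarrow\GL(2,\bZ)$. With all hypotheses of Lemma~\ref{lm:whe_cond} in place, it yields that $\tJincl\colon\stAffSubgr\to\Diff(\ATor)$ is a weak homotopy equivalence; since $\Diff(\ATor)$ is a Fr\'echet manifold, hence has the homotopy type of a CW complex (see~\cite{Palais:Top:1966}), and $\stAffSubgr$ is a disjoint union of tori, this is in fact a homotopy equivalence. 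Restricting it to the identity components gives the homotopy equivalence $\RotSub\to\DiffId(\ATor)$, and on $\pi_0$ it gives $\pi_0\Diff(\ATor)\cong\pi_0\stAffSubgr=\stMapClGr$.

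The step I expect to demand the most care is the second one --- the extendability dictionary identifying $\tRestr(\Diff(\ATor))$ with the components of $\Diff(\MTor)$ lying over $\stMapClGr$, and the accompanying fact that restriction to the boundary is a locally trivial fibration onto its image; once this is settled, the application of Lemma~\ref{lm:whe_cond} is purely formal.
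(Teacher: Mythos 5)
Your proposal is correct and follows essentially the same route as the paper's sketch: Lemma~\ref{lm:whe_cond} applied to $\tJincl$ and $\tRestr$, Cerf/Palais/Lima for local triviality, Ivanov for contractibility of the kernel, Lemma~\ref{lm:hom_type_DiffT2} for the base, and the $H_1$-computation showing that an extendable boundary diffeomorphism induces a matrix in $\stMapClGr$. The only cosmetic differences are that you verify the first alternative in~\ref{enum:whe:img_r_rj} where the paper verifies the second (surjectivity of $\tJincl_0$) via the equivalent diagram~\eqref{equ:induced_iso_solid_torus}, and you make explicit the Whitehead-type upgrade from weak to genuine homotopy equivalence, which the paper leaves implicit.
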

\begin{proof}[Sketch of proof]
It suffices to check that $\tJincl$ and $\tRestr$ satisfy assumptions of Lemma~\ref{lm:whe_cond}.

\ref{enum:whe:r}
It is shown in each of the papers~\cite{Cerf:BSMF:1961, Palais:CMH:1960, Lima:CMH:1964} that $\tRestr$ is a locally trivial fibration over its image.
Moreover, contractibility of its kernel $\DiffFix{\ATor}{\MTor}$ is proved in~\cite[Theorem~2]{Ivanov:LOMI:1976}, which is also the first statement of the lemma.

\ref{enum:whe:rj}
As mentioned above, $\tRestr\circ\tJincl\equiv \id_{\stAffSubgr}: \stAffSubgr \subset \Diff(\MTor)$ is the identity inclusion.
Hence, by Lemma~\ref{lm:hom_type_DiffT2}, the inclusion $\tRestr\circ\tJincl: \RotSub \subset \DiffId(\MTor)$ is a homotopy equivalence, and the homomorphism $(\tRestr\circ\tJincl)_0: \pi_0 \stAffSubgr = \stMapClGr \to \pi_0\Diff(\MTor)$ is injective.

\ref{enum:whe:img_r_rj}
Let us prove that $\tJincl_0:\pi_0 \stAffSubgr = \stMapClGr \to \pi_0\Diff(\ATor)$ is surjective.
Keeping generators of $H_1(\MTor,\bZ)$ as in Lemma~\ref{lm:hom_type_DiffT2}, choose a generator of $H_1(\ATor)=\bZ$ so that $[\Circle\times0] = 1$.
Then the inclusion $i:\MTor\subset\ATor$ induces the homomorphism $i_1:\bZ^2\to\bZ$, $i_1(\pu,\pv)=\pu$.

Let $\dif\in\Diff(\ATor)$ and $A = \amatr{k}{l}{m}{n} = \hhom\circ\tRestr(\dif)$ be the automorphism of $H_1(\MTor,\bZ^2)$ induced by $\tRestr(\dif)=\restr{\dif}{\MTor}$.
We will show that $\matrt(A)\in\stAffSubgr$, which will imply that $\tJincl_0(\matrt(A)) = [\dif]\in\pi_0\Diff(\ATor)$, and thus prove that $\tJincl_0$ is surjective.

Notice that the induced isomorphism of homologies $\dif_1\colon\bZ\cong H_1(\ATor,\bZ)\to H_1(\ATor,\bZ)\cong\bZ$ is a multiplication by some $\eps\in\{-1,1\}$.
Also, since $\dif$ preserves the boundary $\MTor=\dATor$, we have the following commutative diagram of homology homomorphisms:
\begin{equation}\label{equ:induced_iso_solid_torus}
\begin{gathered}
\xymatrix@R=1.8em{
    H_1(\MTor,\bZ) \ar@{^(->}[d]_-{i_1} \ar[rr]^-{A}       & & H_1(\MTor,\bZ) \ar@{^(->}[d]^-{i_1} \\
    H_1(\ATor,\bZ)                      \ar[rr]^-{\dif_1}  & & H_1(\ATor,\bZ)
}
\end{gathered}
\end{equation}
which means that $(\eps,0) = (1,0)\amatr{k}{l}{m}{n}$.
Hence $k=\eps$, $l=0$, and since $\det(A)=\pm1$, we should have that $n = \pm 1$.
In other words, $A\in\stMapClGr$.
\end{proof}

\begin{subremark}\rm
The proof that $\tJincl_0: \stMapClGr\equiv \pi_0\stAffSubgr \to \pi_0\Diff(\ATor)$ is a bijection can be found in~\cite[Theorem~14]{Wajnryb:FM:1998}, where it is done by means of isotopies of meridian disks.
\end{subremark}

Notice that every diffeomorphism~\eqref{equ:hAb_solidTorus} preserves the leaves of $\AFoliation$, whence $\tJincl(\stAffSubgr) \subset \Diff(\AFoliation)$.

\subsection{Proof of Theorem~\ref{th:DFdT_full_variant}}
\label{sect:proof:th:DFdT_full_variant}
We need to show that each of the following inclusions
\begin{align*}
&\stAffSubgr  \ \xmonoArrow{\tJincl} \ \Diff(\AFoliation)   \ \subset \ \Diff(\ATor), &
&\{\id_{\ATor}\} \ \subset \ \DiffFix{\AFoliation}{\MTor}  \ \subset \ \DiffFix{\ATor}{\MTor}
\end{align*}
is a weak homotopy equivalence.
In particular, every path component of $\Diff(\AFoliation)$ is weakly homotopy equivalent to $2$-torus $\RotSub$, $\pi_0\Diff(\AFoliation) \cong \stMapClGr$, while $\pi_k\DiffFix{\AFoliation}{\MTor}=0$ for all $k\geq0$.

We need only to prove that the map $\tJincl: \stAffSubgr\to\Diff(\AFoliation)$, see~\eqref{equ:tJincl}, is a weak homotopy equivalence.
All other statements are established in Theorem~\ref{th:DFdT_contr} and Lemma~\ref{lm:hom_type_DiffT2}.

For formal reason, it will be convenient to distinguish the map of $\stAffSubgr$ into $\Diff(\AFoliation)$ from the map $\tJincl:\stAffSubgr \to \Diff(\ATor)$ considered in Lemma~\ref{lm:hom_type_DiffT2}.
Therefore we define the inclusion map $i:\Diff(\AFoliation) \subset \Diff(\ATor)$, and this gives the map $\stJincl:\stAffSubgr\to\Diff(\AFoliation)$ such that $\tJincl = i \circ \stJincl:\stAffSubgr\to\Diff(\ATor)$.
Our aim is to show that in fact $\stJincl$ is a weak homotopy equivalence.

Let $\stRestr:\Diff(\AFoliation)\to\Diff(\MTor)$, $\stRestr(\dif) = \restr{\dif}{\MTor}$, be the restriction map to the boundary $\MTor$ of $\ATor$.
Then $\stRestr = \tRestr \circ i$, where $\tRestr:\Diff(\ATor)\to\Diff(\MTor)$ is the same restriction map defined on all of $\Diff(\ATor)$ as in Lemma~\ref{lm:DiffFixTdT_contr}.
We will check that $\stJincl$ and $\stRestr$ satisfy conditions of Lemma~\ref{lm:whe_cond}, which will imply that $\stJincl$ is a weak homotopy equivalence.

Note that on the level of $\pi_0$-groups we have the following sequence of homomorphisms:
\[
\xymatrix{
\pi_0\stAffSubgr           \ar[r]_-{\stJincl_0} \ar@/^15px/[rr]^-{\tJincl_0} &
\pi_0\Diff(\AFoliation) \ar[r]^-{i_0} \ar@/_10px/[rr]_-{\stRestr_0} &
\pi_0\Diff(\ATor)       \ar[r]^-{\tRestr_0} &
\pi_0\Diff(\MTor),
}
\]

\ref{enum:whe:r}
By Lemma~\ref{lm:triv_nbh:loc_triv_restr}, $\stRestr$ is a locally trivial fibration over its image.
Moreover, its kernel $\DiffFix{\ATor}{\MTor}$ is weakly contractible due to Theorem~\ref{th:DFdT_contr}.

\ref{enum:whe:rj}
As mentioned in Lemma~\ref{lm:DiffFixTdT_contr}, $\stRestr\circ\stJincl\equiv \id_{\stAffSubgr}: \stAffSubgr \subset \Diff(\MTor)$ is the identity inclusion.
Therefore, by Lemma~\ref{lm:hom_type_DiffT2}, the inclusion $\stRestr\circ\stJincl: \RotSub \subset \DiffId(\MTor)$ of the corresponding identity path components is a homotopy equivalence, and the homomorphism $(\stRestr\circ\stJincl)_0: \pi_0 \stAffSubgr = \stMapClGr \to \pi_0\Diff(\MTor)$ is injective.

\ref{enum:whe:img_r_rj}
Again, by Lemma~\ref{lm:DiffFixTdT_contr}, $\tJincl_0 = i_0\circ\stJincl_0: \pi_0\stAffSubgr \to \pi_0\Diff(\ATor)$ is a bijection.
Hence
\[
\stRestr_0(\Diff(\AFoliation)) \subset \tRestr_0(\pi_0\Diff(\ATor)) = \tRestr_0\circ\tJincl_0(\pi_0\Diff(\ATor)) =
\stRestr_0\circ\stJincl_0(\pi_0\Diff(\ATor)),
\]
i.e.\ the image of $(\stRestr\circ\stJincl)_0$ is contained in the image of $\stRestr_0$.
\qed

\section{Diffeomorphisms of lens spaces}\label{sect:diff_lens_spaces}
\subsection{Polar Morse-Bott foliations on lens spaces}
Consider the product $\TLman = \ATor\times\{0,1\}$, which can be regarded as a disjoint union of two copies of $\ATor$.
Let also $\xi:\MTor\to\MTor$ be a diffeomorphism, and $\sim$ be the equivalence relation on $\TLman$ such that $(\px,0)\sim(\xi(\px),1)$ for $\px\in\MTor$, and for any other point $(\py,i) \in \TLman$ with $\py\not\in\MTor$ its equivalence class consists of that point only.
Then the quotient space $\Lman_{\xi}:=\TLman/\sim$ is called a \myemph{lens space}.
Thus, $\Lman$ is obtained from two copies of the solid torus $\ATor$ by some diffeomorphism between their boundaries.

Let also $\psi\colon \TLman\to\Lman_{\xi}$ be the corresponding quotient map.
It will be convenient to denote the images of $\ATor\times 0$, $\ATor\times 1$, and $\MTor$ in $\Lpq{p}{q}$ under $\psi$ by $\ATor$, $\BTor$, and $\MTor$ respectiely.

It is well known, that if $\hat{\xi}\colon \MTor\to\MTor$ is a diffeomorphism isotopic to $\xi$, then $\Lman_{\xi}$ and $\Lman_{\hat{\xi}}$ are diffeomorphic.
Therefore, by Lemma~\ref{lm:hom_type_DiffT2}, one can assume that $\xi  = \matrt(A) \in \tAffSubgr \subset \Diff(\MTor)$ and is given by formula~\eqref{equ:afft_Ab} for some $A = \amatr{r}{p}{s}{q}\in\GL(2,\bZ)$, i.e.\ $\xi(\al,\az) = (\al^{r}\az^{p},\al^{s}\az^{q})$.
Moreover, one can further replace $\TLman$ with its diffeomorphic copy, which, due to Lemma~\ref{lm:DiffFixTdT_contr}, allows to replace $A$ with $PAQ$ for any matrices $P,Q\in\stMapClGr$, see~\eqref{equ:group_A}.
Also, one can regard $\sim$ as ``gluing the tori from outside'', hence it is natural to assume that $\det(A) = rq - sp = -1$, so $\xi$ reverses orientation.
In this case $\Lman_{\xi}$ is denoted by $\Lpq{p}{q}$.

All the above simplification of $A$ easily imply that for each $p =0,1,2$, there is a unique lens space (up to a diffeomorphism).
Namely,
\begin{align*}
\bullet~\Lpq{0}{1} &= \Circle\times S^2, & A &= \amatr{-1}{0}{0}{1} = \hLambda, & \xi(\al,\az) &= (\bar{\al},\az); \\
\bullet~\Lpq{1}{0} &= S^3,               & A &= \amatr{0}{1}{1}{0},  & \xi(\al,\az) &= (\az, \al); \\
\bullet~\Lpq{2}{1} &= \bR{P}^3,          & A &= \amatr{1}{2}{1}{1},  & \xi(\al,\az) &= (\al\az^2, \al\az).
\end{align*}
Moreover, for $p> 2$ we can assume that $0 < q < p$, $(p,q)=1$, and can also replace $q$ with $q^{-1}(\bmod\,p)$.
Lens spaces are well studied, e.g.~\cite{Reidemeister:AMSUH:1935, Brody:AM:1960, Gadgil:GT:2001}, and their mapping class groups $\pi_0\Diff(\Lpq{p}{q})$ are described in~\cite{Hatcher:Top:1976, Ivanov:DANSSSR:1979, Ivanov:DANSSSR:Corrections:1979, Rubinstein:TrAMS:1979, Hatcher:ProcAMS:1981, Bonahon:Top:1983}, see also~\cite{Ivanov:LOMI:1982}. 

Notice that the foliation $\AFoliation$ on each $\ATor\times i$ gives a foliation on $\Lpq{p}{q}$ into parallel $2$-tori and two singular circles.
We will denote that foliation by $\FolLpq{p}{q}$ and call \myemph{polar}.

\begin{subremark}\rm
Notice that $\FolLpq{p}{q}$ coincides with the partition of $\Lpq{p}{q}$ into the level sets of the following Morse-Bott function
\[
    \func\colon \Lpq{p}{q}\to\bR,
    \qquad
    \func(\al,\az) =
    \begin{cases}
        \nrm{\az}^2,   & (\al,\az)\in\ATor\times 0,\\
        2-\nrm{\az}^2, & (\al,\az)\in\ATor\times 1,
    \end{cases}
\]
whose critical submanifolds are central circles of $(\Circle\times 0) \times i$ of $\ATor\times i$, $i=0,1$.
\end{subremark}

Let $\Diff(\FolLpq{p}{q})$ be the group of $\FolLpq{p}{q}$-leaf preserving diffeomorphisms of $\Lpq{p}{q}$ endowed with the corresponding $\Cinfty$ Whitney topologies.
Let also $\dif\in\Diff(\FolLpq{p}{q})$.
Since $\ATor$ and $\BTor$ are the unions of leaves of $\FolLpq{p}{q}$, they are invariant under $\dif$, whence $\dif$ lifts to a unique diffeomorphism $\widehat{\dif}:\TLman\to\TLman$ such that for all $\px\in\MTor$ and $i=0,1$:
\begin{align*}
    &\psi\circ\widehat{\dif} = \dif\circ\psi, &
    &\widehat{\dif}(\ATor\times i)=\ATor\times i, &
    &\dif\circ\xi(\px,0) = \xi\circ\dif(\px,0).
\end{align*}
Let $Q \in \GL(2,\bZ)$ be the matrix of the automorphism of $H_1(\MTor,\bZ)$ induced by $\restr{\widehat{\dif}}{\MTor\times0}$.
Since it extends to the diffeomorphism $\restr{\widehat{\dif}}{\ATor\times0}$ of the solid torus $\ATor\times0$, we have by Lemma~\ref{lm:DiffFixTdT_contr} that $\eta(Q)\in\stAffSubgr$.
On the other hand, the restriction $\restr{\widehat{\dif}}{\MTor\times1} = \restr{\xi}{\MTor\times1}\, \circ \, \restr{\widehat{\dif}}{\MTor\times0} \, \circ \, \restr{\xi^{-1}}{\MTor\times0}$
also extends to a diffeomorphism $\restr{\widehat{\dif}}{\ATor\times 1}$ of the solid torus $\ATor\times1$, so $\eta(AQA^{-1}) = \xi\eta(A)\xi^{-1}\in\stAffSubgr$.

Consider the following subgroup of $\stAffSubgr$:
\begin{equation}\label{equ:MapClGrp_Lpq}
    \lpqAffSubgr_{p,q} \, := \, \stAffSubgr \,\cap\, \xi^{-1}\stAffSubgr\xi.
\end{equation}
Then $\xi\dif\xi^{-1}\in\stAffSubgr$ for every $\dif\in \lpqAffSubgr_{p,q}$, whence we get a diffeomorphism $\widehat{\phi}_{\dif}:\TLman\to\TLman$ given by $\restr{\widehat{\phi}_{\dif}}{\ATor\times0} = \tJincl(\dif)$ and $\restr{\widehat{\phi}_{\dif}}{\ATor\times1} = \tJincl(\xi\dif\xi^{-1})$.
Moreover, $\restr{\widehat{\phi}_{\dif}}{\MTor\times0}$ and $\restr{\widehat{\phi}_{\dif}}{\MTor\times1}$ agree with the projection $\psi$, and therefore $\widehat{\phi}_{\dif}$ induces a well-defined diffeomorphism $\phi_{\dif}:\Lpq{p}{q} \to \Lpq{p}{q}$.
As $\tJincl(\dif),\tJincl(\xi\dif\xi^{-1})\in\Diff(\BFoliation)$, we obtain that $\phi_{\dif} \in\Diff(\FolLpq{p}{q})$.

Our final result is the following theorem containing Theorem~\ref{th:Lpq_hom_type}:
\begin{subtheorem}\label{th:Lpq_full_variant}
The inclusion $\lpqJincl: \lpqAffSubgr_{p,q} \subset \Diff(\FolLpq{p}{q})$, $\lpqJincl(\dif):= \phi_{\dif}$, is a weak homotopy equivalence.
Moreover,
\[
    \lpqAffSubgr_{p,q} =
    \begin{cases}
        \stAffSubgr = \RotSub\rtimes\stMapClGr,
                & \text{for $\Lpq{0}{1} = \Circle\times S^2$}, \\
        \langle \RotSub, \hLambda, \hMu \rangle \cong \Ort(2)\times\Ort(2),
                & \text{for $\Lpq{1}{0} = S^3$}, \\
        \langle \RotSub, \hLambda\hDtwist, \tau \rangle \cong (\RotSub \rtimes \langle\hLambda\hDtwist\rangle) \times \bZ_2,
                & \text{for $\Lpq{2}{1} = \bR{P}^3$}, \\
        \langle \RotSub, \tau \rangle \cong \RotSub \times \bZ_2,
                & \text{for $p>2$}.
    \end{cases}
\]
In particular, the identity path component of $\Diff(\FolLpq{p}{q})$ is weakly homotopy equivalent to $2$-torus $\RotSub$, and
\begin{align*}
 \pi_0 \Diff(\FolLpq{p}{q}) \cong
 \begin{cases}
    \langle \hDtwist, \hLambda, \hMu \rangle = \stMapClGr \cong \bD_{\infty} \times \bZ_2,  & \text{for } \Lpq{0}{1} = \Circle\times S^2, \\
    \langle \hLambda, \hMu \rangle \cong \bZ_2 \oplus \bZ_2,                             & \text{for } \Lpq{1}{0} = S^3, \\
    \langle \hLambda\hDtwist, \tau\rangle \cong \bZ_2 \oplus \bZ_2,                      & \text{for } \Lpq{2}{1} = \bR{P}^3, \\
    \langle \tau \rangle \cong \bZ_2,                                                    & \text{for $p>2$}.
 \end{cases}
\end{align*}
If $p>2$ and $q^2\not=\pm1 (\bmod\ p)$, then the inclusions $\lpqAffSubgr_{p,q} \subset \Diff(\FolLpq{p}{q}) \subset \Diff(\Lpq{p}{q})$ are weak homotopy equivalences.
\end{subtheorem}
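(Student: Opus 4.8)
The main assertion will follow by applying Lemma~\ref{lm:whe_cond} to the chain
\[
    \lpqAffSubgr_{p,q} \ \xrightarrow{~\lpqJincl~}\ \Diff(\FolLpq{p}{q}) \ \xrightarrow{~\rho~}\ \Diff(\MTor), \qquad \rho(\dif)=\restr{\dif}{\MTor},
\]
where $\rho$ is well defined because the boundary torus $\MTor$ is a leaf of $\FolLpq{p}{q}$, so every $\dif\in\Diff(\FolLpq{p}{q})$ preserves $\MTor$ and hence the two solid tori $\ATor$ and $\BTor$. Since $\MTor$ is a closed codimension one submanifold of $\Int\Lpq{p}{q}$ with trivial normal bundle whose nearby leaves are the parallel tori, Lemma~\ref{lm:triv_nbh:loc_triv_restr} shows that $\rho$ is a locally trivial fibration over its image; its fibre over $\id_{\MTor}$ is $\DiffFix{\FolLpq{p}{q}}{\MTor}$ (possibly after the harmless replacement by its linearized subgroup near $\MTor$), and restricting a leaf preserving diffeomorphism fixed on $\MTor$ to each of $\ATor$, $\BTor$ identifies this fibre with $\DiffFix{\AFoliation}{\MTor}\times\DiffFix{\AFoliation}{\MTor}$, which is weakly contractible by Theorem~\ref{th:DFdT_contr}. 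This gives condition~\ref{enum:whe:r}.

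Next I would verify~\ref{enum:whe:rj} and~\ref{enum:whe:img_r_rj}. By construction $\restr{(\phi_{\dif})}{\MTor}$ equals $\restr{\tJincl(\dif)}{\MTor\times0}=\dif$, so $\rho\circ\lpqJincl$ is simply the inclusion $\lpqAffSubgr_{p,q}\subset\stAffSubgr\subset\Diff(\MTor)$. Conjugation by $\xi=\matrt(A)$ carries rotations to rotations (indeed $\affr{A}{b}\circ\affr{E}{c}\circ\affr{A}{b}^{-1}=\affr{E}{Ac}$), hence $\RotSub\subset\lpqAffSubgr_{p,q}$, and as $\lpqAffSubgr_{p,q}/\RotSub$ embeds into the discrete group $\stMapClGr$ the identity component of $\lpqAffSubgr_{p,q}$ is exactly $\RotSub$. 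By Lemma~\ref{lm:hom_type_DiffT2} the inclusion $\RotSub\subset\DiffId(\MTor)$ is then a homotopy equivalence and $\pi_0\lpqAffSubgr_{p,q}\hookrightarrow\GL(2,\bZ)=\pi_0\Diff(\MTor)$ is injective, which is~\ref{enum:whe:rj}. For~\ref{enum:whe:img_r_rj} I use the first alternative. On $\pi_0$ the image of $\rho\circ\lpqJincl$ is $\pi_0\lpqAffSubgr_{p,q}=\stMapClGr\cap A^{-1}\stMapClGr A$ (the components of $\tAffSubgr$ lying in $\stAffSubgr\cap\xi^{-1}\stAffSubgr\xi$). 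Conversely, for $\dif\in\Diff(\FolLpq{p}{q})$ with lift $\widehat{\dif}$ to $\TLman$, let $Q\in\GL(2,\bZ)$ be the homology automorphism of $\restr{\widehat{\dif}}{\MTor\times0}$; since this map extends to the diffeomorphism $\restr{\widehat{\dif}}{\ATor\times0}$ of the solid torus, Lemma~\ref{lm:DiffFixTdT_contr} gives $Q\in\stMapClGr$, and since $\restr{\widehat{\dif}}{\MTor\times1}=\xi\circ\restr{\widehat{\dif}}{\MTor\times0}\circ\xi^{-1}$ has homology matrix $AQA^{-1}$ and extends over $\ATor\times1$, likewise $AQA^{-1}\in\stMapClGr$, i.e.\ $Q\in A^{-1}\stMapClGr A$. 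Hence $[\rho(\dif)]=Q\in\stMapClGr\cap A^{-1}\stMapClGr A$, so $\rho_0(\pi_0\Diff(\FolLpq{p}{q}))$ lies in the image of $(\rho\circ\lpqJincl)_0$. Lemma~\ref{lm:whe_cond} now yields that $\lpqJincl$ is a weak homotopy equivalence; in particular $\pi_0\Diff(\FolLpq{p}{q})\cong\stMapClGr\cap A^{-1}\stMapClGr A$ and $\DiffId(\FolLpq{p}{q})\simeq\RotSub\simeq T^2$.

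It then remains to compute $\stMapClGr\cap A^{-1}\stMapClGr A$, together with the extension $\RotSub\rtimes(\stMapClGr\cap A^{-1}\stMapClGr A)$, for the four normal forms of $A$. This is elementary: $\stMapClGr$ is the set of matrices having $\avect{0}{1}$ as an eigenvector with eigenvalue $\pm1$, so a matrix lies in $\stMapClGr\cap A^{-1}\stMapClGr A$ iff it stabilizes, up to sign, both $\avect{0}{1}$ and $A^{-1}\avect{0}{1}$; for $\Lpq{0}{1}$ (where $A=\hLambda$) these two directions coincide and one gets all of $\stMapClGr$, while for $\Lpq{1}{0}$, $\Lpq{2}{1}$ and $p>2$ they are distinct primitive directions and the intersection is the stated finite group (respectively $\langle\hLambda,\hMu\rangle$, $\langle\hLambda\hDtwist,\tau\rangle$ and $\langle\tau\rangle$). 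For the final statement, if $p>2$ and $q^2\not\equiv\pm1\pmod p$ then $\stMapClGr\cap A^{-1}\stMapClGr A=\langle\tau\rangle$, so $\lpqAffSubgr_{p,q}$ contains $\RotSub$ with quotient $\bZ_2$; comparing with the known homotopy type of $\Diff(\Lpq{p}{q})$ for such lens spaces — by the computations of mapping class groups of lens spaces and the generalized Smale conjecture cited above, $\pi_0\Diff(\Lpq{p}{q})\cong\bZ_2$ and $\DiffId(\Lpq{p}{q})\simeq T^2$ — one checks that the inclusion $\Diff(\FolLpq{p}{q})\subset\Diff(\Lpq{p}{q})$ induces an isomorphism on $\pi_0$ (both $\bZ_2$, generated by the class of $\tau$) and a weak equivalence on identity path components (both $\simeq\RotSub$), and is therefore a weak homotopy equivalence.

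The main obstacle is condition~\ref{enum:whe:img_r_rj}: one has to match the set of mapping classes of $\Lpq{p}{q}$ realized by $\FolLpq{p}{q}$-leaf preserving diffeomorphisms with the combinatorially defined group $\stMapClGr\cap A^{-1}\stMapClGr A$, which is exactly where the description of the diffeomorphism group of the solid torus (Lemma~\ref{lm:DiffFixTdT_contr}) must be used on both sides of the gluing. A secondary technical point is the precise identification of the fibre of $\rho$, where one passes between $\DiffFix{\FolLpq{p}{q}}{\MTor}$ and its linearized subgroup in order to combine Theorem~\ref{th:DFdT_contr} with Lemma~\ref{lm:triv_nbh:loc_triv_restr}.
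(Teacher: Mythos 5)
Your proposal follows the paper's argument essentially step for step: reducing to Lemma~\ref{lm:whe_cond} via the restriction map to the common boundary torus $\MTor$, verifying condition~\ref{enum:whe:r} through Lemma~\ref{lm:triv_nbh:loc_triv_restr} and Theorem~\ref{th:DFdT_contr}, observing that $\rho\circ\lpqJincl$ is the tautological inclusion $\lpqAffSubgr_{p,q}\subset\Diff(\MTor)$, checking~\ref{enum:whe:img_r_rj} through the homology automorphism argument plus Lemma~\ref{lm:DiffFixTdT_contr}, computing $\stMapClGr\cap A^{-1}\stMapClGr A$ by the eigenvector criterion, and invoking the Smale conjecture for the last claim. (That you use the first alternative of~\ref{enum:whe:img_r_rj} while the paper uses surjectivity is a cosmetic difference; the underlying argument is identical.)

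One technical point you gloss over is the identification of the fiber of $\rho$. You write that $\DiffFix{\FolLpq{p}{q}}{\MTor}$ ``identifies with'' $\DiffFix{\AFoliation}{\MTor}\times\DiffFix{\AFoliation}{\MTor}$, but this is not the case: a diffeomorphism of $\Lpq{p}{q}$ fixed on $\MTor$ restricts to diffeomorphisms of $\ATor$ and $\BTor$ whose jets along $\MTor$ must match, so the restriction map to the pair is an injection, not an isomorphism onto the full product. Your parenthetical ``possibly after the harmless replacement by its linearized subgroup near $\MTor$'' is exactly where the issue lives, but the replacement is not merely convenient --- it is what produces the product structure, and it is not free: one must first apply the Linearization Theorem~\ref{th:linearization_simpler} (or Lemma~\ref{lm:collar}) to deform $\DiffFix{\FolLpq{p}{q}}{\MTor}$ onto the subgroup $\DiffGdN$ fixed on a two-sided collar $\RNb{\bConst}=\mathbf{C}\cup\mathbf{C}'$, and then $\DiffGdN\cong\DiffFix{\AFoliation}{\mathbf{C}}\times\DiffFix{\BFoliation}{\mathbf{C}'}$, each factor weakly contractible by Lemma~\ref{lm:collar} together with Theorem~\ref{th:DFdT_contr}. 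With that correction made explicit, the proof is complete.
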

\begin{proof}
A)
First we need to prove that \myemph{the inclusion $\lpqJincl: \lpqAffSubgr_{p,q} \subset \Diff(\FolLpq{p}{q})$ is a weak homotopy equivalence}.
Notice that $\MTor$ is a leaf of $\FolLpq{p}{q}$, whence it is invariant under each $\dif\in\Diff(\FolLpq{p}{q})$.
Therefore, we have the restriction map $\lpqRestr:\Diff(\FolLpq{p}{q})\to\Diff(\MTor)$, $\lpqRestr(\dif)=\restr{\dif}{\MTor}$, being a homomorphism with kernel is $\DiffGdTA$.
It suffices to check that $\lpqJincl$ and $\lpqRestr$ satisfy assumptions of Lemma~\ref{lm:whe_cond}.

\ref{enum:whe:r}
By Lemma~\ref{lm:triv_nbh:loc_triv_restr}, $\rmap$ is a locally trivial principal $\DiffGdTA$-fibration over its image.

Let us prove that its kernel \myemph{$\DiffGdTA$ is weakly contractible}.
Let $\mathbf{C}$ and $\mathbf{C}'$ be the collars $\RBd{\bConst}$ of $\MTor$ in $\ATor$ and $\BTor$ respectively, so their union $\RNb{\bConst} = \mathbf{C} \cup \mathbf{C}'$ is a $\FolLpq{p}{q}$-saturated neighborhood of $\MTor$.
Let $\DiffGdN$ be the subgroup of $\DiffGdTA$ consisting of diffeomorphisms fixed on $\RNb{\bConst}$.
Then, as in Lemma~\ref{lm:collar}, one can show that $\DiffGdN$ coincides with the group $\DiffFix[\vbp,\RNb{\bConst}]{\FolLpq{p}{q}}{\MTor}$.
Then by Linearization Theorem~\ref{th:linearization_simpler} the inclusion $\DiffGdN = \DiffFix[\vbp,\RNb{\bConst}]{\FolLpq{p}{q}}{\MTor} \subset \DiffGdTA$ is a homotopy equivalence.

Note also that we have the following natural isomorphism of topological groups
\[
    \DiffGdN \cong \DiffFix{\AFoliation}{\mathbf{C}} \times \DiffFix{\BFoliation}{\mathbf{C}'},
    \qquad
    \dif\mapsto (\restr{\dif}{\ATor}, \restr{\dif}{\BTor}),
\]
and, by Theorem~\ref{th:DFdT_contr}, both multiples $\DiffFix{\AFoliation}{\mathbf{C}}$ and $\DiffFix{\BFoliation}{\mathbf{C}'}$ are weakly contractible.
Hence so is $\DiffGdN$ and therefore $\DiffGdTA$.

\ref{enum:whe:rj}
Again, $\lpqRestr\circ\lpqJincl\equiv \id_{\stAffSubgr}: \stAffSubgr \subset \Diff(\MTor)$ is the identity inclusion.
Therefore, by Lemma~\ref{lm:hom_type_DiffT2}, the inclusion of the identity path components $\stRestr\circ\stJincl: \RotSub \subset \DiffId(\MTor)$ is a homotopy equivalence, and the homomorphism $(\stRestr\circ\stJincl)_0: \pi_0 \stAffSubgr = \stMapClGr \to \pi_0\Diff(\MTor)$ is injective.

\ref{enum:whe:img_r_rj}
It remains to check that the homomorphism $\lpqJincl_0:\pi_0\lpqAffSubgr_{p,q}\to \pi_0\Diff(\FolLpq{p}{q})$ is surjective.

Since $\lpqRestr$ is locally trivial fibration with weakly contractible fiber, the homomorphism
\[ \lpqRestr_0:\pi_0\Diff(\FolLpq{p}{q})\to\pi_0\Diff(\MTor) = \GL(2,\bZ) = \Aut(H_1(\MTor,\bZ)) \]
is injective, see left triangle in~\eqref{equ:triangles}.
In other words, the isotopy class of $\dif$ in $\pi_0\Diff(\FolLpq{p}{q})$ is determined the automorphism $\gdif_1$ induced by $\gdif$ of the first homologies of $\MTor$.
But as noted in before~\eqref{equ:MapClGrp_Lpq}, $\gdif_1 \in \stAffSubgr \cap \xi^{-1}\stAffSubgr\xi = \lpqAffSubgr_{p,q}$, whence $\lpqJincl_0$ is surjective.

Thus conditions of Lemma~\ref{lm:whe_cond} hold, and $\lpqJincl$ is a weak homotopy equivalence.

B)
Let us \myemph{compute the group $\lpqAffSubgr_{p,q} = \stAffSubgr \cap \xi^{-1}\stAffSubgr\xi$}.

Since $\RotSub$ is a normal subgroup of $\stMapClGr$, and $\xi\in\stMapClGr$, it follows that $\RotSub \subset \lpqAffSubgr_{p,q}$.
Hence $\lpqAffSubgr_{p,q}$ is generated by $\RotSub$ and the image under $\matrt$ of the intersection $\stMapClGr \cap A^{-1}\stMapClGr A$.
Thus we need to compute the latter intersection.
Note also that $\hTau=-E$ commutes with any matrix $A$, and therefore we need only to check the matrices $\hDtwist$, $\hLambda$ and $\hMu$.

1) Let $p=0$, so $\Lpq{0}{1} = \Circle\times S^2$, and $A = \amatr{-1}{0}{0}{1}=\hLambda \in \stMapClGr$.
Then $\stMapClGr = \xi^{-1}\stMapClGr\xi$, and $\lpqAffSubgr_{p,q} = \stAffSubgr$.

2) Assume that $p\geq1$.
First let us make one observation which will simplify computations.
Let $P\in\amatr{\eps}{0}{m}{\delta} \in \stMapClGr$.
Then the vector $\px = \avect{0}{1}$ is an eigen vector of $P$ with eigen value $\delta = \pm 1$.
Conversely, $\stMapClGr$ is exactly the subgroup of $\GL(2,\bZ)$ consisting of matrices for which $\px$ is an eigen vector with eigen value $\pm 1$.

Hence if $Q=A^{-1}PA \in \amatr{\eps'}{0}{m'}{\delta'} \in \stMapClGr$, then $\px$ is still an eigen vector for $Q$, i.e.\
$\delta'\px = Q\px = A^{-1}PA \px$.
Therefore, $\delta'A\px = P A\px$, that is $A\px = \amatr{r}{p}{s}{q}\avect{0}{1} = \avect{p}{q}$ is an eigen vector for $P$ with eigen value $\delta'$.
This effect is, of course, a consequence of the geometric situation that the diffeomorphism $\eta(Q)$ should preserve isotopy classes of meridians of both $\ATor$ and $\BTor$.

Thus, $\avect{\delta' p}{\delta' q} = \amatr{\eps}{0}{m}{\delta} \avect{p}{q} = \avect{\eps p}{mp+\delta q}$.
Since $p\not=0$, we obtain that $\delta' = \eps$ and
\begin{align}\label{equ:identity_lpq}
 q(\eps-\delta) = mp.
\end{align}
Consider several cases.

2.1)
Let $(p,q)=(1,0)$, so $\Lpq{1}{0} = S^3$, and $A = \amatr{0}{1}{1}{0}$.
Then~\eqref{equ:identity_lpq} implies that $m=0$.
Hence $Q=\amatr{\eps}{0}{0}{\delta}$, so $Q$ can be either of the following matrices: $E$, $\hTau=-E$, $\hLambda$, $\hMu$.
One easily checks that $A^{-1}\hLambda A^{-1} = \hMu \in \stMapClGr$ and $A^{-1}\hMu A^{-1} = \hLambda$.
Hence $\lpqAffSubgr_{p,q} = \langle \RotSub, \hLambda, \hMu \rangle$.

2.2)
Let $(p,q)=(2,1)$, so $\Lpq{2}{1} = \bR{P}^3$, and $A = \amatr{1}{2}{1}{1}$.
Then~\eqref{equ:identity_lpq} implies that $\eps-\delta = 2m$.
Hence $Q$ can be either of the following matrices:
\begin{align*}
    E &= \amatr{1}{0}{0}{1},&
    &\hTau = -E = \amatr{-1}{0}{0}{-1}.
    &\hLambda\hDtwist = \amatr{1}{0}{-1}{-1},   &
    &\hTau\hLambda\hDtwist = -\hLambda\hDtwist = \amatr{-1}{0}{1}{1},
\end{align*}
which implies that $\lpqAffSubgr_{p,q} = \langle \RotSub, \hLambda\hDtwist, \hTau \rangle$.

2.3)
Suppose $p>2$.
Then $q\not=0$ as well and is relatively prime with $p$.
Hence there exists $a\not=0$ such that $m = aq$ and $\eps-\delta=ap$.
If $a=0$, then $m=0$ and $\eps=\delta$ so $Q=\pm E$ and it commutes with $A$.
On the other hand, if $a\not=0$, then $\eps-\delta \in \{-2,0,2\}$ implies that $\nrm{ap}=\nrm{\eps-\delta}=2$ which contradicts to the assumption that $p>2$.
Hence in this case $\lpqAffSubgr_{p,q} = \langle \RotSub, \hTau \rangle$.

C)
Suppose $p>2$ and $q^2\not=1 (\bmod\ p)$.
In this case, due to~\cite[Theorem~2.3]{KalliongisMiller:KMJ:2002}, the group $\lpqAffSubgr_{p,q}$ coincides with the isometry group $\Isom(\Lpq{p}{q})$ of the natural elliptic metric on $\Lpq{p}{q}$, see below.
Now by the Smale conjecture for $\Lpq{p}{q}$, the inclusion $\Isom(\Lpq{p}{q}) \subset \Diff(\Lpq{p}{q})$ is a homotopy equivalence, \cite{HongKalliongisMcCulloughRubinstein:LMN:2012}.
Hence the inclusions $\lpqAffSubgr_{p,q} \subset \Diff(\FolLpq{p}{q}) \subset \Diff(\Lpq{p}{q})$ are weak homotopy equivalences.
Theorem~\ref{th:Lpq_full_variant} is completed.
\end{proof}

\subsection{Concluding remarks}
Let us briefly recall the known description of the homotopy types of $\Diff(\Lpq{p}{q})$ and compare them with the homotopy types of $\Diff(\FolLpq{p}{q})$ given in Theorem~\ref{th:Lpq_full_variant}.

\begin{enumerate}[wide, label={\arabic*)}, itemsep=1ex]
\item
For $p=0$ we have the subgroup $\mathcal{Q}=\bigl(\Omega(\Ort(3)) \rtimes \Ort(3) \bigr) \rtimes \Ort(2) \subset \Diff(\Circle\times S^2)$ being a certain iterated semidirect product and consisting of diffeomorphisms $\gdif_{\alpha,A}:\Circle\times S^2\to\Circle\times S^2$ of the form
\[
    \gdif_{\alpha,A}(\al,\pu) = (\alpha \al, A(\al) \pu),  \qquad (\al,\pu)\in\Circle\times S^2,
\]
where $\alpha\in\Circle$, and $A:\Circle\to\Ort(3)$ is a continuous map.
Topologically $\mathcal{Q}$ is a topological product $\Omega(\Ort(3)) \times \Ort(3) \times \Ort(2)$ and A.~Hatcher proved in~\cite{Hatcher:ProcAMS:1981} that the above inclusion $\mathcal{Q} \subset \Diff(\Circle\times S^2)$ is a homotopy equivalence.

Then the inclusion $\tJincl: \lpqAffSubgr_{0,1} = \RotSub\rtimes\stMapClGr \to \mathcal{Q}$ is given by the following formula.
Let $\al\in\Circle$ and $\pu=(\az,\pv)\in S^2 \subset \bR^3 = \bC\times\bR$ be the point on $S^2$.
Then for $b=(\alpha,\beta)\in\RotSub$ and $A=\amatr{\eps}{0}{m}{\delta}\in\stMapClGr$
\begin{equation}\label{equ:hAb_lpq}
    \tJincl(\affst{A}{b})(\al,\az,\pv) = \bigl( \alpha \al^{\eps}, \, \beta \al^{m}\az^{\delta}, \pv \bigr).
\end{equation}

\item
For $p=1$ we have the inclusion $\Ort(4) \subset \Diff(S^3)$ which is also proved by A.~Hatcher to be a homotopy equivalence, see~\cite{Hatcher:AnnM:1983}.
In this case we have a canonical embedding $\lpqAffSubgr_{1,0} = \Ort(2) \times \Ort(2) \subset \Ort(4)$ independently acting on the first and second coordinate.

\item
Suppose $p\geq2$.
Let $\hTau\in\Diff(\Lpq{p}{q})$ be given by $\hTau(\al,\az)=(\bar{\al}, \bar{\az})$ on each $\ATor\times\{i\}$, $i=0,1$, so formally it is the image $\lpqJincl(\matrt(\hTau))$ of the matrix $\hTau$, however to simplify notation we denote it here by the same letter.
If $q^2\equiv 1(\bmod\,p)$, then one can define a diffeomorphism $\sigma_{+}\in\Diff(\Lpq{p}{q})$ exchanging $\ATor$ and $\BTor$ and defined by
$\xymatrix{
    \sigma_{+}: \ATor \ar@{<->}[rr]^-{ (\al,\az) \mapsto  (\al,\az) }_-{ (\al,\az) \leftmaspto  (\al,\az) }  && \BTor.
}$
Also, if $q^2\equiv -1(\bmod\,p)$, then there exists another diffeomorphism $\sigma_{-}\in\Diff(\Lpq{p}{q})$ also exchanging $\ATor$ and $\BTor$ and given by
$\xymatrix{
    \sigma_{-}: \ATor \ar@{<->}[rr]^-{ (\al,\az) \mapsto  (\bar{\al},\az) }_-{ (\al,\bar{\az}) \leftmaspto  (\al,\az) }  && \BTor.
}$
It easily follows that $\sigma_{+}$ preserves orientation and $\sigma_{+}^2 = \id_{\Lpq{p}{q}}$, while $\sigma_{-}$ reverses orientation and $\sigma_{-}^2 = \hTau$.

F.~Bonahon~\cite[Theorem~1]{Bonahon:Top:1983} proved that every diffeomorphism $\dif$ of $\Lpq{p}{q}$ is isotopic to a diffeomorphism which leaves the common boundary $\MTor$ of $\ATor$ and $\BTor$ invariant, and moreover, $\pi_0\Diff(\Lpq{p}{q})$ is generated by the isotopy classes of $\hTau$, $\sigma_{+}$, and $\sigma_{-}$, (more precisely by those $\sigma_{\pm}$ which are defined for the given $(p,q)$).

\item
Notice that for $p\geq2$ the universal cover of $\Lpq{p}{q}$ is $S^3$, whence $\Lpq{p}{q}$ admits a Riemannian metric locally isomorphic with the standard metric on $S^3$, and for that reason such $\Lpq{p}{q}$ are also called \myemph{elliptic} $3$-manifolds, see~\cite[Theorem~2.3]{KalliongisMiller:KMJ:2002} for the description of the isometry groups $\mathrm{Isom}(\Lpq{p}{q})$.
In the book~\cite{HongKalliongisMcCulloughRubinstein:LMN:2012} it was proved that for all $\Lpq{p}{q}$ with $p>2$ (i.e.\ except for $\Lpq{2}{1}=\bR{P}^3$) the inclusion of the isometry group $\mathrm{Isom}(\Lpq{p}{q}) \subset \Diff(\Lpq{p}{q})$ is a homotopy equivalence.
Such a statement is called \myemph{Smale conjecture}, and as mentioned above it was proved by A.~Hatcher for $S^3$.

Moreover, in a recent unpublished yet preprint by R.~Bamler and B.~Kleiner~\cite{BalmerKleiner:RF:2019} the Smale conjecture is proved for all elliptic $3$-manifolds including $S^3$ and $\bR{P}^3$.
In particular, it implies that $\pi_0\Diff(\bR{P}^3)$ is homotopy equivalent to $\mathrm{Isom}(\bR{P}^3) = \SO(4)/\{\pm E\} \cong \bR{P}^3\times\bR{P}^3$.

Finally, note that in~\cite{KalliongisMiller:KMJ:2002} there were studied groups of diffeomorphisms of $\Lpq{p}{q}$, $(p>2)$, fixed on $\MTor$, while in~\cite{KalliongisMiller:TA:2003} there were considered diffeomorphisms leaving invariant $\MTor$.
\end{enumerate}

\subsection*{Acknowledgments}
The authors are grateful to Andy Putman for the information on MathOverflow (\cite{Putman:MO:Answer}) about the homotopy type of $\Diff(\bR{P}^3)$ and reference to the paper~\cite{BalmerKleiner:RF:2019}, to Ryan Budney and Neil Strickland for comments, and to Ivan Smith for reference to the paper~\cite{Fukui:JJM:1976} by K.~Fukui.
The authors also thanks the anonymous Referee for careful reading the paper, useful comments, and suggestions.


\begin{thebibliography}{10}

\bibitem{AbeFukui:CEJM:2005}
K{\=o}jun Abe and Kazuhiko Fukui.
\newblock On the first homology of automorphism groups of manifolds with
  geometric structures.
\newblock {\em Cent. Eur. J. Math.}, 3(3):516--528 (electronic), 2005.
\newblock \href {https://doi.org/10.2478/BF02475921}
  {\path{doi:10.2478/BF02475921}}.

\bibitem{Anderson:AJM:1958}
R.~D. Anderson.
\newblock The algebraic simplicity of certain groups of homeomorphisms.
\newblock {\em Amer. J. Math.}, 80:955--963, 1958.
\newblock \href {https://doi.org/10.2307/2372842} {\path{doi:10.2307/2372842}}.

\bibitem{BalmerKleiner:RF:2019}
R.~Balmer and B.~Kleiner.
\newblock Ricci flow and contractibility of spaces of metrics, 2019.
\newblock URL: \url{https://arxiv.org/abs/1909.08710}.

\bibitem{BerglundMadsen:AM:2020}
Alexander Berglund and Ib~Madsen.
\newblock Rational homotopy theory of automorphisms of manifolds.
\newblock {\em Acta Math.}, 224(1):67--185, 2020.
\newblock \href {https://doi.org/10.4310/acta.2020.v224.n1.a2}
  {\path{doi:10.4310/acta.2020.v224.n1.a2}}.

\bibitem{Bonahon:Top:1983}
Francis Bonahon.
\newblock Diff\'{e}otopies des espaces lenticulaires.
\newblock {\em Topology}, 22(3):305--314, 1983.
\newblock \href {https://doi.org/10.1016/0040-9383(83)90016-2}
  {\path{doi:10.1016/0040-9383(83)90016-2}}.

\bibitem{Brody:AM:1960}
E.~J. Brody.
\newblock The topological classification of the lens spaces.
\newblock {\em Ann. of Math. (2)}, 71:163--184, 1960.
\newblock \href {https://doi.org/10.2307/1969884} {\path{doi:10.2307/1969884}}.

\bibitem{Cerf:BSMF:1961}
Jean Cerf.
\newblock Topologie de certains espaces de plongements.
\newblock {\em Bull. Soc. Math. France}, 89:227--380, 1961.
\newblock \href {https://doi.org/10.24033/bsmf.1567}
  {\path{doi:10.24033/bsmf.1567}}.

\bibitem{DwyerSzczarba:IJM:1983}
William~G. Dwyer and Robert~H. Szczarba.
\newblock On the homotopy type of diffeomorphism groups.
\newblock {\em Illinois J. Math.}, 27(4):578--596, 1983.
\newblock URL: \url{http://projecteuclid.org/euclid.ijm/1256046248}.

\bibitem{EarleEells:BAMS:1967}
C.~J. Earle and J.~Eells.
\newblock The diffeomorphism group of a compact {R}iemann surface.
\newblock {\em Bull. Amer. Math. Soc.}, 73:557--559, 1967.
\newblock \href {https://doi.org/10.1090/S0002-9904-1967-11746-4}
  {\path{doi:10.1090/S0002-9904-1967-11746-4}}.

\bibitem{EarleEells:JGD:1969}
C.~J. Earle and J.~Eells.
\newblock A fibre bundle description of {T}eichm\"{u}ller theory.
\newblock {\em J. Differential Geometry}, 3:19--43, 1969.
\newblock \href {https://doi.org/10.4310/jdg/1214428816}
  {\path{doi:10.4310/jdg/1214428816}}.

\bibitem{EarleSchatz:DG:1970}
C.~J. Earle and A.~Schatz.
\newblock Teichm\"{u}ller theory for surfaces with boundary.
\newblock {\em J. Differential Geometry}, 4:169--185, 1970.
\newblock \href {https://doi.org/10.4310/jdg/1214429381}
  {\path{doi:10.4310/jdg/1214429381}}.

\bibitem{Epstein:CompMath:1970}
D.~B.~A. Epstein.
\newblock The simplicity of certain groups of homeomorphisms.
\newblock {\em Compositio Math.}, 22:165--173, 1970.

\bibitem{EvangelistaSuarezTorresVera:JS:2019}
M.~Evangelista-Alvarado, P.~Su\'{a}rez-Serrato, J.~Torres~Orozco, and R.~Vera.
\newblock On {B}ott-{M}orse foliations and their {P}oisson structures in
  dimension three.
\newblock {\em J. Singul.}, 19:19--33, 2019.
\newblock \href {https://doi.org/10.5427/jsing.2019.19b}
  {\path{doi:10.5427/jsing.2019.19b}}.

\bibitem{Fomenko:UMN:1989}
A.~T. Fomenko.
\newblock Symplectic topology of completely integrable {H}amiltonian systems.
\newblock {\em Uspekhi Mat. Nauk}, 44(1(265)):145--173, 248, 1989.
\newblock URL: \url{http://dx.doi.org/10.1070/RM1989v044n01ABEH002006}, \href
  {https://doi.org/10.1070/RM1989v044n01ABEH002006}
  {\path{doi:10.1070/RM1989v044n01ABEH002006}}.

\bibitem{Fukui:JJM:1976}
Kazuhiko Fukui.
\newblock On the homotopy type of some subgroups of {$ {\rm Diff}(M^{3})$}.
\newblock {\em Japan. J. Math. (N.S.)}, 2(2):249--267, 1976.
\newblock \href {https://doi.org/10.4099/math1924.2.249}
  {\path{doi:10.4099/math1924.2.249}}.

\bibitem{Fukui:JMKU:1980}
Kazuhiko Fukui.
\newblock Homologies of the group {${\rm Diff}^{\infty }({\bf R}^{n},\,0)$}\
  and its subgroups.
\newblock {\em J. Math. Kyoto Univ.}, 20(3):475--487, 1980.
\newblock \href {https://doi.org/10.1215/kjm/1250522211}
  {\path{doi:10.1215/kjm/1250522211}}.

\bibitem{FukuiUshiki:JMKU:1975}
Kazuhiko Fukui and Shigehiro Ushiki.
\newblock On the homotopy type of {$F{\rm Diff}\ (S^{3},$}
  {${\mathcal{F}}_{R})$}.
\newblock {\em J. Math. Kyoto Univ.}, 15:201--210, 1975.
\newblock \href {https://doi.org/10.1215/kjm/1250523125}
  {\path{doi:10.1215/kjm/1250523125}}.

\bibitem{Gabai:JDG:2001}
David Gabai.
\newblock The {S}male conjecture for hyperbolic 3-manifolds: {${\rm
  Isom}(M^3)\simeq{\rm Diff}(M^3)$}.
\newblock {\em J. Differential Geom.}, 58(1):113--149, 2001.
\newblock URL: \url{http://projecteuclid.org/euclid.jdg/1090348284}, \href
  {https://doi.org/10.4310/jdg/1090348284} {\path{doi:10.4310/jdg/1090348284}}.

\bibitem{Gadgil:GT:2001}
Siddhartha Gadgil.
\newblock Cobordisms and {R}eidemeister torsions of homotopy lens spaces.
\newblock {\em Geom. Topol.}, 5:109--125, 2001.
\newblock \href {https://doi.org/10.2140/gt.2001.5.109}
  {\path{doi:10.2140/gt.2001.5.109}}.

\bibitem{Gramain:ASENS:1973}
Andr\'{e} Gramain.
\newblock Le type d'homotopie du groupe des diff\'{e}omorphismes d'une surface
  compacte.
\newblock {\em Ann. Sci. \'{E}cole Norm. Sup. (4)}, 6:53--66, 1973.
\newblock \href {https://doi.org/10.24033/asens.1242}
  {\path{doi:10.24033/asens.1242}}.

\bibitem{Hajduk:BP:1978}
Bogus{\l}aw Hajduk.
\newblock On the homotopy type of diffeomorphism groups of homotopy spheres.
\newblock {\em Bull. Acad. Polon. Sci. S\'er. Sci. Math. Astronom. Phys.},
  26(12):1003--1006 (1979), 1978.

\bibitem{HallerTeichmann:AGAG:2003}
Stefan Haller and Josef Teichmann.
\newblock Smooth perfectness through decomposition of diffeomorphisms into
  fiber preserving ones.
\newblock {\em Ann. Global Anal. Geom.}, 23(1):53--63, 2003.
\newblock \href {https://doi.org/10.1023/A:1021280213742}
  {\path{doi:10.1023/A:1021280213742}}.

\bibitem{Hatcher:ProcAMS:1981}
A.~Hatcher.
\newblock On the diffeomorphism group of {$S^{1}\times S^{2}$}.
\newblock {\em Proc. Amer. Math. Soc.}, 83(2):427--430, 1981.
\newblock \href {https://doi.org/10.2307/2043543} {\path{doi:10.2307/2043543}}.

\bibitem{Hatcher:Top:1976}
Allen Hatcher.
\newblock Homeomorphisms of sufficiently large {$P^{2}$}-irreducible
  {$3$}-manifolds.
\newblock {\em Topology}, 15(4):343--347, 1976.
\newblock \href {https://doi.org/10.1016/0040-9383(76)90027-6}
  {\path{doi:10.1016/0040-9383(76)90027-6}}.

\bibitem{Hatcher:AT:2002}
Allen Hatcher.
\newblock {\em Algebraic topology}.
\newblock Cambridge University Press, Cambridge, 2002.

\bibitem{Hatcher:AnnM:1983}
Allen~E. Hatcher.
\newblock A proof of the {S}male conjecture, {${\rm Diff}(S^{3})\simeq {\rm
  O}(4)$}.
\newblock {\em Ann. of Math. (2)}, 117(3):553--607, 1983.
\newblock \href {https://doi.org/10.2307/2007035} {\path{doi:10.2307/2007035}}.

\bibitem{Herman:CR:1971}
Michael-Robert Herman.
\newblock Simplicit\'e du groupe des diff\'eomorphismes de classe {$C^{\infty
  }$}, isotopes \`a l'identit\'e, du tore de dimension {$n$}.
\newblock {\em C. R. Acad. Sci. Paris S\'er. A-B}, 273:A232--A234, 1971.

\bibitem{HongKalliongisMcCulloughRubinstein:LMN:2012}
Sungbok Hong, John Kalliongis, Darryl McCullough, and J.~Hyam Rubinstein.
\newblock {\em Diffeomorphisms of elliptic 3-manifolds}, volume 2055 of {\em
  Lecture Notes in Mathematics}.
\newblock Springer, Heidelberg, 2012.
\newblock \href {https://doi.org/10.1007/978-3-642-31564-0}
  {\path{doi:10.1007/978-3-642-31564-0}}.

\bibitem{Ivanov:LOMI:1976}
N.~V. Ivanov.
\newblock Groups of diffeomorphisms of {W}aldhausen manifolds.
\newblock {\em Zap. Nau\v{c}n. Sem. Leningrad. Otdel. Mat. Inst. Steklov.
  (LOMI)}, 66:172--176, 209, 1976.
\newblock Studies in topology, II.
\newblock \href {https://doi.org/10.1007/BF01098421}
  {\path{doi:10.1007/BF01098421}}.

\bibitem{Ivanov:DANSSSR:Corrections:1979}
N.~V. Ivanov.
\newblock Corrections: ``{H}omotopies of automorphism spaces of some
  three-dimensional manifolds'' [{D}okl. {A}kad. {N}auk {SSSR} {\bf 244}
  (1979), no. 2, 274--277; {MR} 80k:57015].
\newblock {\em Dokl. Akad. Nauk SSSR}, (no. 6,):1288, 1979.
\newblock URL: \url{http://mi.mathnet.ru/dan43242}.

\bibitem{Ivanov:DANSSSR:1979}
N.~V. Ivanov.
\newblock Homotopies of automorphism spaces of some three-dimensional
  manifolds.
\newblock {\em Dokl. Akad. Nauk SSSR}, 244(2):274--277, 1979.

\bibitem{Ivanov:LOMI:1982}
N.~V. Ivanov.
\newblock Homotopy of spaces of diffeomorphisms of some three-dimensional
  manifolds.
\newblock {\em Zap. Nauchn. Sem. Leningrad. Otdel. Mat. Inst. Steklov. (LOMI)},
  pages 72--103, 164--165, 1982.
\newblock Studies in topology, IV.

\bibitem{KalliongisMiller:KMJ:2002}
John Kalliongis and Andy Miller.
\newblock Geometric group actions on lens spaces.
\newblock {\em Kyungpook Math. J.}, 42(2):313--344, 2002.

\bibitem{KalliongisMiller:TA:2003}
John Kalliongis and Andy Miller.
\newblock Actions on lens spaces which respect a {H}eegaard decomposition.
\newblock {\em Topology Appl.}, 130(1):19--55, 2003.
\newblock \href {https://doi.org/10.1016/S0166-8641(02)00196-7}
  {\path{doi:10.1016/S0166-8641(02)00196-7}}.

\bibitem{KhokhliukMaksymenko:IndM:2020}
Oleksandra Khokhliuk and Sergiy Maksymenko.
\newblock Diffeomorphisms preserving {M}orse-{B}ott functions.
\newblock {\em Indag. Math. (N.S.)}, 31(2):185--203, 2020.
\newblock \href {https://doi.org/10.1016/j.indag.2019.12.004}
  {\path{doi:10.1016/j.indag.2019.12.004}}.

\bibitem{KhokhliukMaksymenko:PIGC:2020}
Oleksandra Khokhliuk and Sergiy Maksymenko.
\newblock Smooth approximations and their applications to homotopy types.
\newblock {\em Proc. Int. Geom. Cent.}, 13(2):68--108, 2020.
\newblock \href {https://doi.org/10.15673/tmgc.v13i2.1781}
  {\path{doi:10.15673/tmgc.v13i2.1781}}.

\bibitem{KhokhliukMaksymenko:2022}
Oleksandra Khokhliuk and Sergiy Maksymenko.
\newblock Foliated and compactly supported isotopies of regular neighborhoods,
  2022.
\newblock \href {http://arxiv.org/abs/2208.05876} {\path{arXiv:2208.05876}}.

\bibitem{Kudryavtseva:MatSb:1999}
E.~A. Kudryavtseva.
\newblock Realization of smooth functions on surfaces as height functions.
\newblock {\em Mat. Sb.}, 190(3):29--88, 1999.
\newblock URL: \url{http://dx.doi.org/10.1070/SM1999v190n03ABEH000392}, \href
  {https://doi.org/10.1070/SM1999v190n03ABEH000392}
  {\path{doi:10.1070/SM1999v190n03ABEH000392}}.

\bibitem{Kudryavtseva:SpecMF:VMU:2012}
E.~A. Kudryavtseva.
\newblock Special framed {M}orse functions on surfaces.
\newblock {\em Vestnik Moskov. Univ. Ser. I Mat. Mekh.}, (4):14--20, 2012.
\newblock URL: \url{http://dx.doi.org/10.3103/S0027132212040031}, \href
  {https://doi.org/10.3103/S0027132212040031}
  {\path{doi:10.3103/S0027132212040031}}.

\bibitem{Kudryavtseva:MathNotes:2012}
E.~A. Kudryavtseva.
\newblock The topology of spaces of {M}orse functions on surfaces.
\newblock {\em Math. Notes}, 92(1-2):219--236, 2012.
\newblock Translation of Mat. Zametki {{\bf{9}}2} (2012), no. 2, 241--261.
\newblock URL: \url{http://dx.doi.org/10.1134/S0001434612070243}, \href
  {https://doi.org/10.1134/S0001434612070243}
  {\path{doi:10.1134/S0001434612070243}}.

\bibitem{Kudryavtseva:MatSb:2013}
E.~A. Kudryavtseva.
\newblock On the homotopy type of spaces of {M}orse functions on surfaces.
\newblock {\em Mat. Sb.}, 204(1):79--118, 2013.
\newblock URL: \url{http://dx.doi.org/10.1070/SM2013v204n01ABEH004292}, \href
  {https://doi.org/10.1070/SM2013v204n01ABEH004292}
  {\path{doi:10.1070/SM2013v204n01ABEH004292}}.

\bibitem{Kudryavtseva:DAN:2016}
E.~A. Kudryavtseva.
\newblock Topology of spaces of functions with prescribed singularities on the
  surfaces.
\newblock {\em Dokl. Akad. Nauk}, 468(2):139--142, 2016.
\newblock \href {https://doi.org/10.1134/s1064562416030066}
  {\path{doi:10.1134/s1064562416030066}}.

\bibitem{Kupers:GT:2019}
Alexander Kupers.
\newblock Some finiteness results for groups of automorphisms of manifolds.
\newblock {\em Geom. Topol.}, 23(5):2277--2333, 2019.
\newblock \href {https://doi.org/10.2140/gt.2019.23.2277}
  {\path{doi:10.2140/gt.2019.23.2277}}.

\bibitem{KuznietsovaSoroka:UMJ:2021}
I.~Kuznietsova and Yu. Soroka.
\newblock First {B}etti numbers of orbits of {M}orse functions on surfaces.
\newblock {\em Ukra\"\i n. Mat. Zh.}, 73(2):179--200, 2021.
\newblock \href {https://doi.org/10.37863/umzh.v73i2.2383}
  {\path{doi:10.37863/umzh.v73i2.2383}}.

\bibitem{KuznietsovaMaksymenko:PIGC:2020}
Iryna Kuznietsova and Sergiy Maksymenko.
\newblock Reversing orientation homeomorphisms of surfaces.
\newblock {\em Proc. Int. Geom. Cent.}, 13(4):129--159, 2020.
\newblock \href {https://doi.org/10.15673/tmgc.v13i4.1953}
  {\path{doi:10.15673/tmgc.v13i4.1953}}.

\bibitem{LechMichalik:PMD:2013}
Jacek Lech and Ilona Michalik.
\newblock On the structure of the homeomorphism and diffeomorphism groups
  fixing a point.
\newblock {\em Publ. Math. Debrecen}, 83(3):435--447, 2013.
\newblock \href {https://doi.org/10.5486/PMD.2013.5551}
  {\path{doi:10.5486/PMD.2013.5551}}.

\bibitem{LechRybicki:BCP:2007}
Jacek Lech and Tomasz Rybicki.
\newblock Groups of {$C^{r,s}$}-diffeomorphisms related to a foliation.
\newblock In {\em Geometry and topology of manifolds}, volume~76 of {\em Banach
  Center Publ.}, pages 437--450. Polish Acad. Sci. Inst. Math., Warsaw, 2007.
\newblock \href {https://doi.org/10.4064/bc76-0-21}
  {\path{doi:10.4064/bc76-0-21}}.

\bibitem{Lima:CMH:1964}
Elon~L. Lima.
\newblock On the local triviality of the restriction map for embeddings.
\newblock {\em Comment. Math. Helv.}, 38:163--164, 1964.
\newblock \href {https://doi.org/10.1007/BF02566913}
  {\path{doi:10.1007/BF02566913}}.

\bibitem{Ling:CM:1984}
Wensor Ling.
\newblock Factorizable groups of homeomorphisms.
\newblock {\em Compositio Math.}, 51(1):41--50, 1984.
\newblock URL: \url{http://www.numdam.org/item?id=CM_1984__51_1_41_0}.

\bibitem{MafraScarduraSeade:JS:2014}
A.~Mafra, B.~Sc\'{a}rdua, and J.~Seade.
\newblock On smooth deformations of foliations with singularities.
\newblock {\em J. Singul.}, 9:101--110, 2014.

\bibitem{MaksymenkoFeshchenko:MS:2015}
S.~Maksymenko and B.~Feshchenko.
\newblock Orbits of smooth functions on $2$-torus and their homotopy types.
\newblock {\em Matematychni Studii}, 44(1):67--84, 2015.
\newblock \href {http://arxiv.org/abs/1409.0502} {\path{arXiv:1409.0502}}.

\bibitem{Maksymenko:TA:2003}
Sergey Maksymenko.
\newblock Smooth shifts along trajectories of flows.
\newblock {\em Topology Appl.}, 130(2):183--204, 2003.
\newblock \href {https://doi.org/10.1016/S0166-8641(02)00363-2}
  {\path{doi:10.1016/S0166-8641(02)00363-2}}.

\bibitem{Maksymenko:BSM:2006}
Sergey Maksymenko.
\newblock Stabilizers and orbits of smooth functions.
\newblock {\em Bull. Sci. Math.}, 130(4):279--311, 2006.
\newblock \href {https://doi.org/10.1016/j.bulsci.2005.11.001}
  {\path{doi:10.1016/j.bulsci.2005.11.001}}.

\bibitem{Maksymenko:AGAG:2006}
Sergiy Maksymenko.
\newblock Homotopy types of stabilizers and orbits of {M}orse functions on
  surfaces.
\newblock {\em Ann. Global Anal. Geom.}, 29(3):241--285, 2006.
\newblock \href {https://doi.org/10.1007/s10455-005-9012-6}
  {\path{doi:10.1007/s10455-005-9012-6}}.

\bibitem{Maksymenko:OsakaJM:2011}
Sergiy Maksymenko.
\newblock Local inverses of shift maps along orbits of flows.
\newblock {\em Osaka Journal of Mathematics}, 48(2):415--455, 2011.
\newblock \href {http://arxiv.org/abs/0806.1502} {\path{arXiv:0806.1502}}.

\bibitem{Maksymenko:TA:2020}
Sergiy Maksymenko.
\newblock Deformations of functions on surfaces by isotopic to the identity
  diffeomorphisms.
\newblock {\em Topology Appl.}, 282:107312, 48, 2020.
\newblock \href {https://doi.org/10.1016/j.topol.2020.107312}
  {\path{doi:10.1016/j.topol.2020.107312}}.

\bibitem{MartinezAlfaroMezaSarmientoOliveira:JDE:2016}
J.~Mart\'\i~nez Alfaro, I.~S. Meza-Sarmiento, and R.~D.~S. Oliveira.
\newblock Singular levels and topological invariants of {M}orse {B}ott
  integrable systems on surfaces.
\newblock {\em J. Differential Equations}, 260(1):688--707, 2016.
\newblock \href {https://doi.org/10.1016/j.jde.2015.09.008}
  {\path{doi:10.1016/j.jde.2015.09.008}}.

\bibitem{MartinezAlfaroMezaSarmientoOliveira:TMNA:2018}
Jos\'e Mart\'{\i}nez-Alfaro, Ingrid~S. Meza-Sarmiento, and Regilene D.~S.
  Oliveira.
\newblock Singular levels and topological invariants of {M}orse-{B}ott
  foliations on non-orientable surfaces.
\newblock {\em Topol. Methods Nonlinear Anal.}, 51(1):183--213, 2018.

\bibitem{Mather:Top:1971}
John~N. Mather.
\newblock The vanishing of the homology of certain groups of homeomorphisms.
\newblock {\em Topology}, 10:297--298, 1971.
\newblock \href {https://doi.org/10.1016/0040-9383(71)90022-X}
  {\path{doi:10.1016/0040-9383(71)90022-X}}.

\bibitem{Mather:BAMS:1974}
John~N. Mather.
\newblock Simplicity of certain groups of diffeomorphisms.
\newblock {\em Bull. Amer. Math. Soc.}, 80:271--273, 1974.

\bibitem{Milnor:NAMS:2011}
John Milnor.
\newblock Differential topology forty-six years later.
\newblock {\em Notices Amer. Math. Soc.}, 58(6):804--809, 2011.

\bibitem{Novikov:IzvAN:1965}
S.~P. Novikov.
\newblock Differentiable sphere bundles.
\newblock {\em Izv. Akad. Nauk SSSR Ser. Mat.}, 29:71--96, 1965.
\newblock URL: \url{https://www.mathnet.ru/rus/im/v29/i1/p71}.

\bibitem{Palais:CMH:1960}
Richard~S. Palais.
\newblock Local triviality of the restriction map for embeddings.
\newblock {\em Comment. Math. Helv.}, 34:305--312, 1960.
\newblock \href {https://doi.org/10.1007/BF02565942}
  {\path{doi:10.1007/BF02565942}}.

\bibitem{Palais:Top:1966}
Richard~S. Palais.
\newblock Homotopy theory of infinite dimensional manifolds.
\newblock {\em Topology}, 5:1--16, 1966.
\newblock \href {https://doi.org/10.1016/0040-9383(66)90002-4}
  {\path{doi:10.1016/0040-9383(66)90002-4}}.

\bibitem{Putman:MO:Answer}
Andy Putman.
\newblock Answer on {M}ath{O}verflow to the question: Homotopy type of
  {$\mathrm{Diff}(\bR{P}^3)$}.
\newblock URL: \url{https://mathoverflow.net/q/431229}.

\bibitem{Reidemeister:AMSUH:1935}
Kurt Reidemeister.
\newblock Homotopieringe und {L}insenr\"{a}ume.
\newblock {\em Abh. Math. Sem. Univ. Hamburg}, 11(1):102--109, 1935.
\newblock \href {https://doi.org/10.1007/BF02940717}
  {\path{doi:10.1007/BF02940717}}.

\bibitem{Rubinstein:TrAMS:1979}
J.~H. Rubinstein.
\newblock On {$3$}-manifolds that have finite fundamental group and contain
  {K}lein bottles.
\newblock {\em Trans. Amer. Math. Soc.}, 251:129--137, 1979.
\newblock \href {https://doi.org/10.2307/1998686} {\path{doi:10.2307/1998686}}.

\bibitem{Rybicki:MonM:1995}
Tomasz Rybicki.
\newblock The identity component of the leaf preserving diffeomorphism group is
  perfect.
\newblock {\em Monatsh. Math.}, 120(3-4):289--305, 1995.
\newblock \href {https://doi.org/10.1007/BF01294862}
  {\path{doi:10.1007/BF01294862}}.

\bibitem{Rybicki:DM:1996}
Tomasz Rybicki.
\newblock Homology of the group of leaf preserving homeomorphisms.
\newblock {\em Demonstratio Math.}, 29(2):459--464, 1996.

\bibitem{Rybicki:SJM:1996}
Tomasz Rybicki.
\newblock Isomorphisms between leaf preserving diffeomorphism groups.
\newblock {\em Soochow J. Math.}, 22(4):525--542, 1996.

\bibitem{Rybicki:DM:1998}
Tomasz Rybicki.
\newblock On the group of diffeomorphisms preserving a submanifold.
\newblock {\em Demonstratio Math.}, 31(1):103--110, 1998.

\bibitem{Rybicki:DGA:1999}
Tomasz Rybicki.
\newblock The flux homomorphism in the foliated case.
\newblock In {\em Differential geometry and applications ({B}rno, 1998)}, pages
  413--418. Masaryk Univ., Brno, 1999.

\bibitem{Rybicki:DGA:2001}
Tomasz Rybicki.
\newblock On foliated, {P}oisson and {H}amiltonian diffeomorphisms.
\newblock {\em Differential Geom. Appl.}, 15(1):33--46, 2001.
\newblock \href {https://doi.org/10.1016/S0926-2245(01)00042-0}
  {\path{doi:10.1016/S0926-2245(01)00042-0}}.

\bibitem{ScarduaSeade:JDG:2009}
Bruno Sc\'ardua and Jos\'e Seade.
\newblock Codimension one foliations with {B}ott-{M}orse singularities. {I}.
\newblock {\em J. Differential Geom.}, 83(1):189--212, 2009.
\newblock URL: \url{http://projecteuclid.org/euclid.jdg/1253804355}.

\bibitem{Schultz:Top:1971}
Reinhard Schultz.
\newblock Improved estimates for the degree of symmetry of certain homotopy
  spheres.
\newblock {\em Topology}, 10:227--235, 1971.
\newblock \href {https://doi.org/10.1016/0040-9383(71)90007-3}
  {\path{doi:10.1016/0040-9383(71)90007-3}}.

\bibitem{Smale:ProcAMS:1959}
Stephen Smale.
\newblock Diffeomorphisms of the {$2$}-sphere.
\newblock {\em Proc. Amer. Math. Soc.}, 10:621--626, 1959.
\newblock \href {https://doi.org/10.1090/S0002-9939-1959-0112149-8}
  {\path{doi:10.1090/S0002-9939-1959-0112149-8}}.

\bibitem{Smolentsev:SMP:2006}
N.~K. Smolentsev.
\newblock Diffeomorphism groups of compact manifolds.
\newblock {\em Sovrem. Mat. Prilozh.}, (37, Geometriya):3--100, 2006.
\newblock \href {https://doi.org/10.1007/s10958-007-0471-0}
  {\path{doi:10.1007/s10958-007-0471-0}}.

\bibitem{Thurston:BAMS:1974}
William Thurston.
\newblock Foliations and groups of diffeomorphisms.
\newblock {\em Bull. Amer. Math. Soc.}, 80:304--307, 1974.

\bibitem{VuNgoc:Top:2003}
San V\~{u}~Ng\d{o}c.
\newblock On semi-global invariants for focus-focus singularities.
\newblock {\em Topology}, 42(2):365--380, 2003.
\newblock \href {https://doi.org/10.1016/S0040-9383(01)00026-X}
  {\path{doi:10.1016/S0040-9383(01)00026-X}}.

\bibitem{Wajnryb:FM:1998}
Bronis\l~aw Wajnryb.
\newblock Mapping class group of a handlebody.
\newblock {\em Fund. Math.}, 158(3):195--228, 1998.
\newblock \href {https://doi.org/10.4064/fm-158-3-195-228}
  {\path{doi:10.4064/fm-158-3-195-228}}.

\bibitem{Wiesendorf:JDG:2014}
Stephan Wiesendorf.
\newblock Taut submanifolds and foliations.
\newblock {\em J. Differential Geom.}, 96(3):457--505, 2014.
\newblock URL: \url{http://projecteuclid.org/euclid.jdg/1395321847}.

\end{thebibliography}
\def\cprime{$'$} \def\cprime{$'$} \def\cprime{$'$} \def\cprime{$'$}
  \def\cprime{$'$} \def\cprime{$'$} \def\cprime{$'$} \def\cprime{$'$}
  \def\cprime{$'$} \def\cprime{$'$} \def\cprime{$'$} \def\cprime{$'$}
  \def\cprime{$'$} \def\cprime{$'$}

\end{document}